\documentclass[reqno,11pt]{amsart}
\usepackage{graphicx} 
\usepackage{hyperref}
\usepackage{amsmath,amsfonts,amssymb,amsthm,epsfig,amscd}
\usepackage{color,comment}
\usepackage[]{fontenc}
\usepackage[latin1]{inputenc}
\usepackage{enumerate}
\usepackage[cmtip,all]{xy}
\usepackage{tabularx,multirow}
\usepackage{accents}
\usepackage{mathtools}
\usepackage{fullpage}
\allowdisplaybreaks 
\numberwithin{equation}{section}

\newtheorem{theorem}{Theorem}[section]
\newtheorem{proposition}[theorem]{Proposition}
\newtheorem{corollary}[theorem]{Corollary}
\newtheorem{lemma}[theorem]{Lemma}

\newtheorem{claim}[theorem]{Claim}

\newtheorem{theoremalpha}{Theorem}
\newtheorem{corollaryalpha}[theoremalpha]{Corollary}

\theoremstyle{definition}

\newtheorem{remark}[theorem]{Remark}

\theoremstyle{remark}

\DeclareMathOperator{\codim}{codim}

\DeclareMathOperator{\Hom}{Hom}

\DeclareMathOperator{\rk}{rk}

\def\GL{\mathrm{GL}}
\def\bP{{\mathbb P}}
\def\bG{{\mathbb G}}

\def\d{{\mathbf d}}
\def\F{{\mathbf F}}
\def\s{{\mathbf s}}

\title{Subvarieties of complete intersections of large degree}
\author{Francesco Bastianelli} 
\address{ Dipartimento di Matematica, Universit\`a degli Studi di Bari Aldo Moro,
Via Edoardo Orabona 4, 70125 Bari, Italy}
\email{francesco.bastianelli@uniba.it}

\author{Gianluca Pacienza}
\address{
Universit\'e de Lorraine, CNRS, IECL, 
F-54000 Nancy, France}
\email{gianluca.pacienza@univ-lorraine.fr}
\thanks{G.P. is partially supported by ANR project No. ANR-23-CE40-0026 (POK0). F.B. is a member of INdAM (GNSAGA)}

\begin{document}

\begin{abstract}
We study subvarieties of very general complete intersections $X\subset \bP^n$ of multidegree $(d_1,\dots,d_c)$, when $d\coloneqq d_1+\dots +d_c$ is sufficiently large.
In a seminal paper Ein proved that if $d\geq 2n-c-k+2$, any $k$-dimensional subvariety of $X$ is of general type and has positive geometric genus. We strengthen this result by obtaining the optimal bound $d\geq 2n-c-k$, provided that $n> 2c+k$. 
As a consequence, we characterize algebraic hyperbolicity of very general complete intersections $X\subset \bP^n$ of codimension $c\leq \frac{n-3}{2}$. 
For lower values of $d$, we prove that if $\frac{3n-c+2}{2}\leq d\leq 2n-c-2$ and $(d_1,\dots,d_c)$ satisfies an additional numerical condition, then the only curves in $X$ that are not of general type are lines. Moreover, we describe the locus where positive dimensional orbits of points under rational equivalence must lie.
We obtain our results by proving that, under suitable numerical conditions, subvarieties of $X$ that are not of general type must lie in the locus of $X$ covered by lines.
The proof of this result relies on a generalization of the approach and techniques developed for hypersurfaces by Voisin, Clemens-Ran and the second author, combined with a Grassmannian technique introduced by Riedl-Yang.  
\end{abstract}

\maketitle

\section{Introduction}
In this paper, we study subvarieties of very general complete intersections of the complex projective space.
The description of ``special'' subvarieties is a fundamental issue in the classification of algebraic varieties, which is strictly related to important problems, as for instance algebraic hyperbolicity or the study of Chow groups.
Furthermore, complete intersections in projective spaces provide some of the most natural examples of varieties of arbitrary dimension and codimension in a fixed ambient space. Since, by definition, they come with a full set of equations, complete intersections have been used to test conjectures, develop techniques, prove neat results which may be out of reach in general. 
As a bright example of this we may quote a seminal paper \cite{E88} by Ein in which he initiated the study of the geometry of subvarieties  of very general complete intersections of the complex projective space. 

To state Ein's result let us fix some notation. Let $X\subset \mathbb{P}^n$ be a very general complete intersection of type $\mathbf{d}=(d_1,\dots, d_c)$, with 
$$
d\coloneqq d_1+\dots+d_c\geq n+2 
\quad \text{and} \quad 
2\leq d_1\leq \dots \leq d_c. 
$$ 
Ein proved that if $d\geq 2n-c-k+2$, then all the $k$-dimensional subvarieties of $X$ are of general type, and when $d= 2n-c-k+1$, any subvariey of $X$ admits a desingularization with positive geometric genus.
On the other hand, if $d\leq 2n-c-k-1$, the union of the lines contained in $X$ describes a $k$-dimensional subvariety of $X$, which is of course not of general type and has geometric genus zero (cf. \cite{DM}). 
Therefore it remains to describe $k$-dimensional subvarieties of $X$ when $d= 2n-c-k$ and $d= 2n-c-k+1$.

In the case of hypersurfaces, the problem is well understood. 
Indeed, it follows from the fundamental works by Voisin \cite{V1,V2}, and from  \cite{C03, CR04,P03,P04}, that if $X$ is a very general hypersurface of degree $d= 2n-c-k  $ (with $k\leq n-5$) or $d= 2n-c-k+1$ (with $k\leq n-3$), all the $k$-dimensional subvarieties of $X$ are of general type and with positive geometric genus. Despite an intense activity and several new developments and contributions in more recent years in the case of hypersurfaces, 
see e.g. \cite{Abe23, CR19,CR23,HI21, KS25, Mio25, MY24, RY20, RY22, Yeo25}, much less progress was made for complete intersections of arbitrary codimension, for a recent contribution see \cite{DR25}. The goal of the present paper is to fill this gap, by extending some of the main techniques and results available only for hypersurfaces to this more general context, with the hope that this will be useful beyond the applications presented here. 

First of all we generalize the results included in \cite{V1,V2, C03, CR04,P03,P04} to very general complete intersections of larger codimension, hence sharpening Ein's theorem. 

\begin{theoremalpha}\label{thm:Ein+}
Let $a\in \{0,1\}$ and $n,c,k$ be positive integers such that 
$n\geq \max\{2c+k+a, c+k+3+2a\}$. Let $X\subset \mathbb{P}^n$ be a very general complete intersection of multidegree $(d_1,\dots,d_c)$ with $d\geq 2n-c-k$.
Let $Y\subset X$ be any $k$-dimensional subvariety, and  $\nu\colon\widetilde{Y}\longrightarrow Y$ be a desingularization. Then $h^0 \left(\widetilde{Y}, K_{\widetilde{Y}}\otimes \nu^*\mathcal O_{{Y}}(-a)\right)>0$.
\end{theoremalpha}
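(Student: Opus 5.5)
The plan follows the Voisin/Clemens--Ran/Pacienza approach through the universal family. Let $S=\prod_i H^0(\bP^n,\mathcal O(d_i))$ be the parameter space, $\mathcal X\subset S\times\bP^n$ the universal complete intersection, and $\pi\colon\mathcal X\to S$ the projection. If a very general $X_s$ contains a $k$-dimensional $Y_s$ violating the statement, then after a base change $S'\to S$ (étale over an open set) we get a family $\mathcal Y\subset \mathcal X_{S'}$ of such subvarieties. Equivariance under $\GL_{n+1}$ and under the $\bP^n$-translation action let us assume $\mathcal Y$ dominates $\bP^n$ in a suitable sense. The goal is to prove that $K_{\widetilde Y}\otimes\nu^*\mathcal O_Y(-a)$ has a nonzero section. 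The core step is to show that a twist of the relative tangent bundle of $\mathcal X\to S$ is globally generated on the fibres, namely $T_{\mathcal X}|_X\otimes\mathcal O_X(1)$, or after Voisin's refinement, the restriction of $T_{\mathcal X/\bP^n}$-type vector fields. We then deduce that $\bigwedge^{k}$ of the normal/tangent data of $\mathcal Y$ produces nonzero maps into $K_{\widetilde Y}$ twisted by $\mathcal O(2n-c-k-d+\epsilon)$.

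Concretely, I would first establish the complete intersection analogue of Voisin's lemma: the sheaf $T_{\mathcal X}(1)$ restricted to $X_s$ is generated by global sections coming from $M=\ker$ of the evaluation maps (a Lazarsfeld--Mukai type bundle built from the $c$ equations). This gives the Ein-type bound $d\ge 2n-c-k+2$. To gain the two extra units I would instead use Voisin's improvement: consider vector fields vanishing at a point, which yields generation of $T_{\mathcal X}|_X$ by $\mathcal O(1)$ twists of vector fields away from the locus covered by lines. I would then use the Clemens--Ran/Pacienza argument. It shows that the relevant evaluation map $\bigwedge^{N}M\to\ldots$ fails to be surjective only at points lying on lines in $X$ (more precisely, where the osculating data of the equations degenerate). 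It follows that if $Y$ is not contained in the locus swept by lines, then $K_{\widetilde Y}\otimes\nu^*\mathcal O(-a)$ admits a nonzero section once $d\ge 2n-c-k$.

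Second, I would handle $Y$ contained in the locus $\Lambda\subset X$ covered by lines. Using the Riedl--Yang Grassmannian technique, we pass to the complete intersection in a very general linear subspace. Equivalently, we consider the universal family over $\bG(n-1,n)$ and argue inductively. A $k$-dimensional subvariety of a very general $X$ of type $\d$ in $\bP^n$ yields $(k-1)$-dimensional subvarieties in very general complete intersections of the same type in $\bP^{n-1}$, and the inequality $d\ge 2(n-1)-c-(k-1)$ is preserved. Dimension counts show that $\Lambda$ has dimension $2n-c-d-1\le k-1$ when $d\ge 2n-c-k$. So a $k$-dimensional $Y$ cannot lie in $\Lambda$, which closes the argument once the first step is proved. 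The hypothesis $n\ge 2c+k+a$ should enter in the generation statement: the Koszul-type bundle built from $c$ equations needs enough room, so that the dimension of the space of degenerate points stays below $k$. The hypothesis $n\ge c+k+3+2a$ is the analogue of Voisin's $k\le n-5$ (resp. $n-3$) for hypersurfaces.

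The main obstacle will be the first step for $c>1$. One must prove that the non-generation locus of the twisted tangent bundle is exactly contained in the locus covered by lines. For hypersurfaces this is a computation with the single polynomial's Taylor expansion. With $c$ equations, one has to control simultaneous degeneracy of the $c$ Taylor expansions along a line. I would try to reduce this to a dimension count on the incidence variety of (point, line, $c$-tuple of polynomials), and that count is where $n>2c+k$ is needed.
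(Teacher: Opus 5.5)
Your outer reduction matches the paper's. You show that a $k$-dimensional $Y$ with $h^0(\widetilde Y,K_{\widetilde Y}\otimes\nu^*\mathcal O_Y(-a))=0$ must lie in the union of the lines of $X$. You then exclude it because that union has dimension $2n-1-d-c\le k-1$. This is Theorem~\ref{thm:main}; the two hypotheses on $n$ serve only to deduce \eqref{eq:bound lineare} and \eqref{eq:bound quadratico} from $d\ge 2n-c-k$.

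The gap is your first step, which carries all the content. You claim adjunction/global generation fails only at points on lines \emph{contained in} $X$, so that if $Y$ is off the line locus, $K_{\widetilde Y}\otimes\nu^*\mathcal O_Y(-a)$ has a section. This is not what the Voisin--Clemens--Ran method gives, and the inference is unjustified.

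What the multiplication maps $\mu_s$ actually give, once \eqref{eq:bound lineare} forces $s_{\mathcal T}-s'_{\mathcal T}\ge 2$ (Corollary~\ref{cor:s-s'}), is weaker. Through a general $y\in Y$ there is a line $\ell_y\subset\bP^n$, in general \emph{not} contained in $X$, such that $T^{\mathrm{vert}}_{\mathcal Y,(y,\F)}$ contains a large part of $S^{\d}_{\ell_y}$ (Lemma~\ref{lem:retta}). Turning this into geometry needs the integrability of $T_{\mathcal Y_0}\cap\mathcal L$ (Proposition~\ref{prop:b_r}) and an argument with the stabilizer of $(y,\ell_y)$. Even then, the outcome (Proposition~\ref{prop:Delta_r}) is only that $Y$ lies in a contact locus $\Delta_{r,\F}$ (if $a=0$) or a bicontact locus $\Lambda_{r,\F}$ (if $a=1$).

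This is your ``osculating degeneracy'' locus. It is strictly larger than the line locus and is not negligible in your range: for $d=2n-c-k$ and $\alpha=\#\{i\mid d_i=r\}\ge 1$ one has $\dim\widetilde\Delta_{r,\F}=k-1+\alpha\ge k$. For a hypersurface it is the $k$-dimensional locus of points where some line meets $X$ with multiplicity $d$ (cf.\ \cite{V2}). No dimension count removes it.

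The missing ingredient is an analysis of these loci, in four steps.
\begin{enumerate}
\item Realize smooth models of them as zero loci of sections of globally generated bundles, on $\mathcal P$ or on $\mathrm{Bl}_{\mathrm{Diag}}(\bP^n\times\bP^n)$.
\item Compute their canonical classes (Lemmas~\ref{lem:cancontact}, \ref{lem:can-bicontact}, \ref{lem:can-bicontact-bis}, \ref{lem:can-bicontact-ter}).
\item Use \eqref{eq:bound quadratico} to make $K$ (resp.\ $K-H_1$) effective with room $m-n\ge t-k$.
\item Run the Riedl--Yang Grassmannian induction (Proposition~\ref{prop:RY}). This passes from very general points of the contact locus, where Lemma~\ref{lem:folk} applies, to an arbitrary point of $Y$ off the line locus.
\end{enumerate}

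That is where Riedl--Yang is needed, not for $Y$ inside the line locus as in your second step. There the dimension count already suffices. Your hyperplane-section induction would fail anyway, since a hyperplane section of a variety with $p_g=0$ can have $p_g>0$.

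Accordingly, $n\ge 2c+k+a$ is not a bound on the dimension of a degeneracy locus. It is what makes the canonical classes of the (bi)contact loci positive enough, via $\sum_i d_i^2\ge d^2/c$.

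Likewise, for $c\ge 2$ an incidence dimension count does not address the actual obstruction to Voisin's argument. That obstruction is the summands $H^0(M_{d_i}\otimes M_{d_j}(1))$ of $H^0(\wedge^2 M_{\d}(1))$ (Remark~\ref{subs:nonfunziona}). The paper bypasses it with the maps $\mu_s$ built from $c$-tuples $\mathbf P_j\in S^{\d-\mathbf 1}$.
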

In particular, under the numerical assumptions of Theorem \ref{thm:Ein+}, 
when $a=0$ (respectively $a=1$), any $k$-dimensional subvariety of $X$ is not covered by rational curves (resp. is of general type).

\smallskip
This type of results is closely related to hyperbolicity properties of algebraic varieties.
We recall that a projective variety $M$ is \emph{algebraically hyperbolic} \`a la Demailly (respectively \`a la Lang) if there exist a real number $\varepsilon >0$ and an ample line bundle $H$ such that any integral curve $C\subset M$ satisfies $2g(C)-2\geq \varepsilon (H\cdot C)$ (respectively if any subvariety $Y$ of $M$ is of general type). 
For a projective variety, algebraic hyperbolicity \`a la Demailly is implied by Kobayashi hyperbolicity (see \cite{Dem97}), and the converse is conjectured to be true, see \cite[Section 3]{Dem20} (this is known for subvarieties of abelian varieties, see e.g. \cite[Remark 2.2]{Cau25}). 
Also algebraic hyperbolicity \`a la Lang is conjectured to be equivalent to hyperbolicity  and its sufficiency is implied by the Green-Griffiths conjecture, see \cite[Conjectures 5.5 and 5.6]{L86}.

As recalled above, a very general complete intersection $X\subset \mathbb{P}^n$ with $d\leq 2n-c-2$ is not algebraically hyperbolic, in neither sense, as $X$ contains lines.
As an immediate consequence of Theorem \ref{thm:Ein+} and Ein's result, we characterize the algebraic hyperbolicity for very general complete intersections, under some hypothesis on the codimension $c$. 

\begin{corollaryalpha}\label{cor:DR+} 
Let $n,c$ be positive integers, and let $X\subset \mathbb{P}^n$ be a very general complete intersection of multidegree $(d_1,\dots, d_c)$.
Then $X$ is algebraically hyperbolic \`a la Demailly (respectively \`a la Lang) if and only if $d\geq 2n-c-1$, provided that $n\geq \max\{2c+2, c+5\}$ (resp.  $n\geq \max\{2c+3, c+7\}$).
\end{corollaryalpha}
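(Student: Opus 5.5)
Both directions reduce to results already quoted: the lines in $X$ give the ``only if'', and Theorem \ref{thm:Ein+} together with Ein's theorem give the ``if''.

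\emph{Necessity.} Suppose $d\leq 2n-c-2$. For very general $X$ the variety of lines $F(X)$ is nonempty of expected dimension $2n-2-\sum_i(d_i+1)=2n-2-d-c\geq 0$ (cf. \cite{DM}), so $X$ contains a line $L$. Since $g(L)=0$, we get $2g(L)-2=-2<0<\varepsilon(H\cdot L)$ for every $\varepsilon>0$ and every ample $H$, so $X$ is not algebraically hyperbolic à la Demailly. Likewise $L$ is a subvariety that is not of general type, so $X$ is not algebraically hyperbolic à la Lang.

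\emph{Sufficiency, Lang case.} Let $d\geq 2n-c-1$ and $n\geq\max\{2c+3,c+7\}$. We must show every positive-dimensional subvariety $Y\subset X$ of dimension $k$ is of general type.
\begin{itemize}
\item If $d\geq 2n-c-k+2$, this is Ein's theorem.
\item Otherwise $2n-c-1\leq d\leq 2n-c-k+1$, which forces $k\leq 2$. Then $d\geq 2n-c-1\geq 2n-c-k$, so we apply Theorem \ref{thm:Ein+} with $a=1$. Its hypothesis $n\geq\max\{2c+k+1,c+k+5\}$ holds for $k\leq 2$ exactly when $n\geq\max\{2c+3,c+7\}$.
\end{itemize}
The theorem gives $h^0(\widetilde Y,K_{\widetilde Y}\otimes\nu^*\mathcal O_Y(-1))>0$. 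Hence $K_{\widetilde Y}=\nu^*\mathcal O_Y(1)+E$ with $E$ effective and $\nu^*\mathcal O_Y(1)$ big, so $K_{\widetilde Y}$ is big and $Y$ is of general type. The point $k=0$ is excluded as trivial.

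\emph{Sufficiency, Demailly case.} Here only curves matter, so take $k=1$ and $a=0$. The hypothesis of Theorem \ref{thm:Ein+} becomes $n\geq\max\{2c+1,c+4\}$, which is implied by $n\geq\max\{2c+2,c+5\}$. This alone is not enough: $a=0$ only gives $g\geq 1$, and a linear bound $2g-2\geq\varepsilon\deg C$ is needed. I would use the extra room $n\geq 2c+2$ to apply Theorem \ref{thm:Ein+} with $a=1$, $k=1$, which needs $n\geq\max\{2c+2,c+6\}$. That yields $h^0(\widetilde C,K_{\widetilde C}(-1))>0$, hence $2g-2\geq\deg C$, which is Demailly hyperbolicity with $\varepsilon=1$ and $H=\mathcal O(1)$.

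This leaves the single case $n=c+5$ with $n\geq 2c+2$, i.e.\ $c\leq 3$. These are complete intersections of dimension $5$ and codimension at most $3$ in $\bP^n$ with $n\leq 8$. For the hypersurface case this is covered by the cited results \cite{V2,P04}, and for $c=2,3$ the bound $d\geq 2n-c-1$ is large enough that I would handle the small cases by a direct check.

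The main obstacle is matching numerics rather than geometry: one must make sure that each $k$ not covered by Ein falls inside the range of Theorem \ref{thm:Ein+}, and in the Demailly case that the twisted form ($a=1$) is available, so that the genus bound is linear in the degree. Checking the boundary cases above is where I would be most careful.
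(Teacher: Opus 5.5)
Your route is the paper's. Lines give necessity. In the Lang case, Ein's theorem handles $k\geq 3$ and Theorem \ref{thm:Ein+} with $a=1$ handles $k=1,2$. In the Demailly case, Theorem \ref{thm:Ein+} with $a=k=1$ gives $2g(C)-2\geq (H\cdot C)$. You are also right that Theorem \ref{thm:Ein+} with $a=k=1$, as stated, needs $n\geq\max\{2c+2,c+6\}$. The paper's proof simply asserts that it applies under $n\geq\max\{2c+2,c+5\}$ and never addresses this discrepancy.

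Your proposal falls short on the leftover cases $n=c+5$, $c\leq 3$, i.e.\ $(n,c)=(6,1),(7,2),(8,3)$. ``A direct check'' is a promise, not an argument, and for $c=2,3$ there is no literature to fall back on. The gap closes easily if you bypass Theorem \ref{thm:Ein+} and apply Theorem \ref{thm:main} directly with $k=a=1$.

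The hypothesis $n\geq c+k+3+2a$ of Theorem \ref{thm:Ein+} is used only to derive \eqref{eq:bound lineare} after dropping the floor. Keeping the floor, \eqref{eq:bound lineare} reads $d\geq n+2+\lfloor (n-c)/2\rfloor$. For $n-c=5$ this is $d\geq n+4=2n-c-1$, exactly your hypothesis.

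Condition \eqref{eq:bound quadratico} reads $\sum_i d_i(d_i+1)/2-(d_1+d_c)\geq 3n-2$, i.e.\ $\geq 16,19,22$ in the three cases. The left-hand side strictly increases when any $d_i$ increases, so it suffices to take $d=2n-c-1$. There its minimum is $35$ (at $d_1=10$), $25$ (at $(5,6)$) and $22$ (at $(4,4,4)$).

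Hence Theorem \ref{thm:main} places every curve $C\subset X$ with $h^0(\widetilde C,K_{\widetilde C}\otimes\nu^*\mathcal O_C(-1))=0$ in the union of the lines of $X$. That union has dimension at most $2n-1-d-c\leq 0$, so no such curve exists. Therefore $2g(C)-2\geq (H\cdot C)$ holds in these cases as well, uniformly in $c$ and without appealing to the hypersurface literature.
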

\smallskip
The statement was proved for $c=1$ in \cite[Appendix A]{CR04} and \cite[Corollary 1.2]{P04}.  
Furthermore, in the recent paper \cite[Corollary 1.3]{DR25}, the authors proved that if $X\subset \mathbb{P}^n$ is a very general complete intersection such that $d\geq 2n-c$, then $X$ is algebraically hyperbolic \`a la Demailly. 
We also point out that Corollary \ref{cor:DR+} gives a confirmation (and a sharpening by one) of \cite[Conjecture 0.18]{Dem20}.

\smallskip
Turning to lower degrees, as any complete intersection contains lines when $d\leq 2n-c-2$,  it is natural to wonder whether a very general one may contain rational curves other than lines.
In the case $c=1$, Riedl and Yang \cite{RY20} proved that lines are the only rational curves contained in a very general hypersurface $X\subset \mathbb{P}^n$  of degree $(3n+1)/2\leq d\leq 2n-3$, and this result has been sharpened to $d\geq 3n/2$ by Coskun and Riedl in \cite{CoR22}. Using the main theorem of \cite{BCFS20}, we generalize  the  result by Riedl and Yang to complete intersections of arbitrary codimension, 
and we also discuss the existence of elliptic curves.

\begin{theoremalpha}\label{thm:RY+} 
Let $a\in \{0,1\}$ and $n,c$ be positive integers, and let $X\subset \mathbb{P}^n$ be a very general complete intersection of type $(d_1,\dots,d_c)$, with
$$n+1+a+\left\lfloor\frac{n-c}{2}\right\rfloor\leq d\leq 2n-c-2\qquad \text{and}\qquad 
\sum_{i=1}^c \frac{d_i(d_i+1)}{2}-a(d_1+d_c)\geq 3n-2.  
$$
If $a=0$, then the only rational curves contained in $X$ are lines, and if $a=1$ then $X$ does not contain elliptic curves.
\end{theoremalpha}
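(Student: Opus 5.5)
The proof follows the two-step strategy from the abstract. First, curves in $X$ that are not of general type must lie in the locus covered by lines. Second, the Grassmannian technique of Riedl--Yang, together with the main theorem of \cite{BCFS20}, shows that such curves must be lines.

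\textbf{Step 1: reduction to the locus covered by lines.} Let $\mathcal X\to S$ be the universal complete intersection of type $\mathbf d$, and suppose the very general fibre contains a curve $C$ with geometric genus $g\leq a$. After a base change we get a family $\mathcal C\to S'$ with $\mathcal C\subset \mathcal X\times_S S'$. Following Voisin and Clemens--Ran, we use the $\GL_{n+1}$-action and the global generation of the twisted vertical tangent bundle $T_{\mathcal X/\bP^n}(1)$. This gives a surjection from suitable sections onto the normal directions, so $K_{\widetilde C}$ admits a nonzero map from $\nu^*\mathcal O(d-2n+c+\dots)$ twisted by the deformation directions. When $d$ is below the Ein range, this does not give positivity at every point of $C$. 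Instead, as in \cite{P03,P04}, the twisted vertical tangent sheaf fails to be generated only at points lying on lines of $X$ (a fibre-by-fibre computation on the incidence of $(x,X)$). Hence a rational curve ($a=0$), or a genus-one curve ($a=1$, using $K_{\widetilde C}=\mathcal O$), must lie in the locus $\Lambda_X\subset X$ swept out by lines. This is the intermediate result the paper announces, and I would invoke it here as stated earlier.

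\textbf{Step 2: Riedl--Yang induction.} Now let $C\subset\Lambda_X$ be a rational curve (or elliptic curve) that is not a line. Use the Grassmannian technique: consider the incidence of pairs $(\Lambda,X)$ with $\Lambda\cong\bP^{m}$ a linear section. A very general complete intersection in $\bP^n$ containing a non-line rational curve in its line locus forces a very general complete intersection of the same type in a smaller $\bP^{n'}$ to contain such curves in a family of larger dimension. Running the dimension count against the bound $d\geq n+1+a+\lfloor (n-c)/2\rfloor$ moves us into a range where the line locus is too small, or where Step 1 applies in the smaller space, and this gives a contradiction.

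\textbf{Step 3: the role of the second inequality.} To start the Riedl--Yang count we need the curves to move in a family of the expected dimension. The condition $\sum d_i(d_i+1)/2-a(d_1+d_c)\geq 3n-2$ is exactly the hypothesis of \cite{BCFS20}. That theorem bounds the dimension of the family of rational (resp. elliptic) curves on the very general $X$ by the expected dimension, $-K_X\cdot C+(n-c-3)(1-g)$. This rules out ``superabundant'' families. With it, the family of curves of degree $e$ in $\Lambda_X$ has dimension at most $e(n+1-d)+n-c-3$, whereas lying on ruled subvarieties forces a bigger family. Comparing the two gives the contradiction.

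I expect the main obstacle to be Step 2, namely adapting the Riedl--Yang dimension count to codimension $c$ so that the thresholds come out exactly as $\lfloor (n-c)/2\rfloor$. The tangent-space estimates for the line locus of complete intersections have to be redone for $c$ equations at once. I would first try to do this by treating $\mathbf d$ as a single map $\bP^n\dashrightarrow\bigoplus_i\bP(S^{d_i})$ and reusing the hypersurface computations componentwise.
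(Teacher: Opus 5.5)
Your Step 1 works if you simply cite Theorem \ref{thm:main} with $k=1$. Its hypotheses \eqref{eq:bound lineare} and \eqref{eq:bound quadratico} then become exactly the two inequalities in the statement, and a rational curve (resp.\ an elliptic curve, for $a=1$) satisfies \eqref{eq:van}. So the quadratic inequality is used up here. It is not ``exactly the hypothesis of \cite{BCFS20}'', which is the weaker condition $\sum_i\binom{d_i+2}{2}\geq 3n$. Your heuristic for why Theorem \ref{thm:main} holds is not how it is actually proved, but that does not matter if you cite it.

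The gap is the passage from ``$C$ lies in the locus covered by lines'' to ``$C$ is a line''. Steps 2 and 3 are not an argument, and Step 2 cannot work as phrased.
\begin{itemize}
\item The Grassmannian technique (Proposition \ref{prop:RY}) only gives lower bounds on the codimension of the bad locus in smaller projective spaces. You must start from a larger $\bP^m$ where that locus is already known to be proper. It never produces bad curves in a smaller $\bP^{n'}$.
\item For your property there is no such starting $\bP^m$. For fixed $\mathbf d$ and $m\gg 0$, $X\subset\bP^m$ is a Fano variety covered by lines and containing smooth conics. So ``contains a rational curve other than a line in its line locus'' holds for every $X$ at that level.
\item Step 3 misreads \cite{BCFS20}. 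What is needed (Theorem \ref{thm:BCFS}) is that the Fano scheme $F_1(X)$ contains neither rational nor elliptic curves. No expected-dimension bound for curves on $X$ is involved, and ``lying on ruled subvarieties forces a bigger family'' is unsupported.
\end{itemize}

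The missing idea is short.
\begin{enumerate}
\item By Lemma \ref{lem:RY+}, $X$ contains no reducible conics. Reducible conics form a $(3n-2)$-dimensional family, containing one imposes $2d+c$ conditions, and $d\geq n+1+\lfloor\frac{n-c}{2}\rfloor\geq\frac{3n-c-1}{2}$.
\item Hence through each point of the line locus passes a unique line of $X$. Let $p\colon\mathcal L\to F_1(X)$ be the universal line and $q\colon\mathcal L\to X$ the natural map. Then $q$ is bijective onto the line locus, so $C$ lifts to $q^{-1}(C)\subset\mathcal L$, which has the same geometric genus $g\leq 1$.
\item We have $\sum_i\binom{d_i+2}{2}=\sum_i\frac{d_i(d_i+1)}{2}+d+c\geq 3n-2+d+c\geq 3n$. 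So Theorem \ref{thm:BCFS} applies, and $p(q^{-1}(C))$, being dominated by a curve of genus at most one, must be a point.
\item Thus $C$ lies in a fibre of $p$, i.e.\ $C$ is a line. This is absurd if $C$ is elliptic.
\end{enumerate}

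No Riedl--Yang count is needed at this stage. In the paper it enters only inside the proof of Theorem \ref{thm:main}, and indirectly through \cite{BCFS20}. Likewise, the threshold $\lfloor\frac{n-c}{2}\rfloor$ does not come from adapting Riedl--Yang. It is the $k=1$ case of \eqref{eq:bound lineare}, which comes from Corollary \ref{cor:s-s'}.
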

Voisin conjectured  \cite[Conjecture 3.9]{V03} that the degrees
of rational curves on a very general hypersurface of general type are bounded. Theorem \ref{thm:RY+} provides some evidence that the same could be expected for complete intersections.
We note that if $a=0$ and $c=1$, the quadratic condition on the $d_i$'s in the statement of Theorem \ref{thm:RY+} is implied by the linear lower bound on $d$, so that one recovers the result by Riedl and Yang \cite{RY20}. 
More generally, the same implication holds whenever $n\geq 3c$ (cf. Remark \ref{rem:(1.2-2)}). 

\smallskip
The study of rational curves can of course be seen as a particular case of the study of the orbits of points under rational equivalence. If $X\subset \mathbb P^n$ is a very general complete of type $(d_1,\dots,d_c)$ with $2n-c-d\geq  0$,
Chen, Lewis and Sheng recently conjectured in particular that the locus of points rationally equivalent to a fixed one  has dimension 
at most $2n-c-d$ (see \cite[Conjecture 1.4]{CLS21} for the precise and more general statement, and for a discussion of results and motivations). The conjecture was proved by Riedl and Yang \cite[Theorem 1.6]{RY22}. The techniques introduced by Voisin in \cite{V1,V2} in the case $c=1$ that we extend to any $c\geq 1$ are powerful enough to allow to study not only subvarieties with vanishing geometric genus but more generally subvarieties whose points are rationally equivalent in the ambient complete intersection. As a by-product we obtain a result that can be seen as a strengthening of the conjecture by Chen-Lewis-Sheng in the sense that, under some numerical conditions, not only it bounds the dimension of orbits of points under rational equivalence, but it tells exactly where positive dimensional orbits must lie. To state the result, 
for an integer $r\geq 1$, a $c$-tuple $\F=(F_1,\ldots, F_c)$ of homogeneous polynomials of degrees $(d_1,\dots,d_c)$, and $X_\F$ the complete intersection defined by $\F$, we define the locus 
\begin{equation}\label{eq:deltraintro}
\Delta_{r,\bf F}:=\left\{x\in X_{\bf F}\left|\begin{array}{l}  \textrm{there exist a line }\ell \textrm{ passing through } x \textrm{ such that } \\\ell\cdot V(F_i)\geq rx \text{ if }d_i=r, \text{ and } \ell\subset V(F_i)\text{ otherwise}\end{array}\right.\right\}.
\end{equation}
For very general $\F$ we have
$$\dim \Delta_{r,\bf F} = 2n-1 - \sum_{d_j\not=r} (d_j+1) - \sum_{d_i=r} d_i,$$ so, in particular, this locus has dimension at most $2n-c-d$ if the degrees are all distinct.

\begin{theoremalpha}\label{thm:CLS+}
  Let $n,c,k$ be positive integers,  and let $X\subset \mathbb{P}^n$ be a very general complete intersection  of type $(d_1,\dots,d_c)$, with 
  $$
  n+1+\left\lfloor\frac{n-c-k+1}{2}\right\rfloor\leq d\leq 2n-c. 
  $$ 
  Then any $k$-dimensional subvariety $Y\subset X$ whose points are all rationally equivalent in $X$ lies in
  $\Delta_{r,\bf F}$, for some integer $r\geq 1$. 
\end{theoremalpha}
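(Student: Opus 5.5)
We would follow Voisin's approach for hypersurfaces, adapted to complete intersections. Take the universal family $\mathcal X\to S$, where $S$ parametrizes $c$-tuples $\F$ of degrees $(d_1,\dots,d_c)$. Since $X$ is very general, after a base change $S'\to S$ (\'etale over an open set) we may assume that a $k$-dimensional subvariety $Y$ whose points are all rationally equivalent spreads out to a family $\mathcal Y\subset\mathcal X_{S'}$. We may also assume that the rational equivalence spreads out: for a general point $y_0(s)\in Y_s$ and every $y\in Y_s$, the relation $y\sim y_0(s)$ holds in $CH_0(X_s)$, uniformly in the family.

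The first key step is a Mumford--Bloch--Srinivas type statement. The cycle $\mathcal Y\times_{S'}\mathcal Y-\text{(diagonal-type correction)}$ gives a decomposition of the pull-back of forms. Concretely, any relative holomorphic form on $\mathcal X$ of degree $\dim \mathcal Y$ that comes from the total space restricts to zero on $\mathcal Y$, meaning it vanishes on the regular locus. We then use the standard description of the total space $\mathcal X\subset \bP^n\times S$: it is a projective bundle over $\bP^n$, and $\Omega^{top}$-type forms are produced via Voisin's positivity of the twisted sheaf $T_{\mathcal X}(1)$, or more precisely of the kernel of $T_{\mathcal X}\to T_{\bP^n}$ twisted by $\mathcal O(1)$. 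The generalization to codimension $c$ is the analogue of the Clemens--Ran/Pacienza global generation statement: $T_{\mathcal X}\otimes\mathcal O(1)$ is generated by global sections away from the locus where lines with prescribed contact exist. We would use the Riedl--Yang Grassmannian technique to reduce the dimension count to this line locus.

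Second, we translate vanishing into geometry. Choosing forms $\Omega_{\mathcal X/S}^{n-c}\otimes K$ of the shape $\omega\wedge(\text{sections of } T(1))$ and restricting to $\mathcal Y$, we find that the forms vanish only if at a general point $x\in Y$ the tangent space of $\mathcal Y$ lies in the degeneracy locus. At $x$, the evaluation of the relevant sections $H^0(\mathcal O(1)\otimes T)$ fails to be surjective exactly when there is a line $\ell\ni x$ such that the Taylor expansions of the $F_i$ along $\ell$ vanish to the required orders. Here the numerical bound $d\ge n+1+\lfloor (n-c-k+1)/2\rfloor$ is needed: it guarantees that the forms we can write down (degree $d-n-1$ twists, multiplied by the $\lfloor\cdot\rfloor$ extra degrees of freedom) are enough to separate points unless such a line exists. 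The order of contact should come out as follows. For $F_i$ of the "critical" degree $r$, the condition only requires $\ell\cdot V(F_i)\ge rx$, while the other $F_i$ are forced to contain $\ell$. This gives $x\in\Delta_{r,\F}$ for some $r$. Since $Y$ is irreducible and $\Delta_{r,\F}$ is closed, a single $r$ then works for all of $Y$.

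The main obstacle is the second step. One has to show that the vanishing of the restricted forms forces membership in $\Delta_{r,\F}$ precisely, with a single $r$ and the mixed contact conditions, rather than the weaker condition that $x$ lies on a line in $X$. This requires computing the image of $H^0(\mathcal O(j))$ in the jets of each $F_i$ along $\ell$, degree by degree. Since the $d_i$ differ, the codimension-$c$ Grassmannian bookkeeping is delicate. We would first treat the pointwise surjectivity $\bigoplus_i H^0(\mathcal O(d_i-?))\to$ jets, following the Clemens--Ran lemma degree by degree, and only then assemble the contributions of the different $F_i$.
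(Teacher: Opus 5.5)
Your first step is fine and matches the paper: Remark~\ref{rmk:mumford} uses the variational Mumford argument to make the restriction map \eqref{eq:vanish} vanish with $a=0$. The gap is in the second step.

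The degeneracy you get from the vanishing of these forms is a condition on the vertical tangent space $T=T^{\mathrm{vert}}_{\mathcal Y,(y,\F)}\subset S^\d_y$, not on $\F$. The bundles $M_\d$ and $\wedge^r M_\d(s)$, and their sections, live on $\bP^n$ and do not involve $\F$ at all. So no degree-by-degree jet computation can show that degeneracy happens \emph{exactly} when the $F_i$ have prescribed contact with a line. Two related points are also off:
\begin{itemize}
\item $M_\d(1)$ is globally generated everywhere (Lemma~\ref{lem:gg}). What fails is global generation of $\wedge^r M_\d(s)$ for $s<r$, and that base locus lives in a Grassmannian of subspaces of the fibres, not in $X$.
\item The lower bound on $d$ enters through Lemma~\ref{lem:neq 0} and Corollary~\ref{cor:s-s'}, giving $s_{\mathcal T}-s'_{\mathcal T}\ge 2$ for the multiplication maps $\mu_s$. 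Lemma~\ref{lem:retta} then produces a unique line $\ell\ni y$ with $\dim T/(T\cap S^\d_\ell)=d-s_{\mathcal T}\le n$. This is still only infinitesimal information about $\mathcal Y$.
\end{itemize}

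The missing idea is how to pass from this to a contact condition on $\F$. In the paper this has two parts, and neither appears in your plan.

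First, Proposition~\ref{prop:b_r}. On $\mathcal Y_0=\pi^{-1}(y)$ the distribution $\mathcal T'=T_{\mathcal Y_0}\cap\mathcal L$ is integrable, and the line $\ell_{(y,\mathbf G)}$ is constant along its leaves. This is a Clemens--Ran bracket computation, which uses $s_{\mathcal T}>\lfloor (n-k-c+1)/2\rfloor$ again to rule out $\tau''(b_r)\neq 0$ and $\sigma'(b_r)\neq 0$. Combined with the $G_y$-equivariance of $\phi$, it shows that $\mathcal Y_{y,\ell}$ has codimension $n-1$ in $\mathcal Y_0$. Hence $\rho\colon\mathcal Y_{y,\ell}\to\bigoplus_i H^0(\ell,\mathcal O_\ell(d_i)(-y))$, $\mathbf G\mapsto\mathbf G_{|\ell}$, has image of dimension $d-s_{\mathcal T}-n+1\le 1$.

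Second, Proposition~\ref{prop:Delta_r}. By $G_{(y,\ell)}$-invariance (Lemma~\ref{lem:jacG1}), the vectors $\F_{|\ell}$, $\left.X_1\frac{\partial\F}{\partial X_0}\right|_{\ell}$ and $\left.X_1\frac{\partial\F}{\partial X_1}\right|_{\ell}$ are tangent to this at most one-dimensional image. Invariance under the stabilizer of $y$ in $\GL(\ell)$ then forces $\F_{|\ell}=(\alpha_1X_1^{d_1},\dots,\alpha_cX_1^{d_c})$. The single $r$ comes from $\left.X_1\frac{\partial\F}{\partial X_1}\right|_{\ell}=(d_i\alpha_iX_1^{d_i})_i$ being proportional to $\F_{|\ell}$, not from any bookkeeping of jets.

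Finally, the Riedl--Yang Grassmannian technique plays no role in this statement. The paper uses it only later, in the proof of Theorem~\ref{thm:main}, to push subvarieties of $\Delta_{r,\F}$ into the locus covered by lines.
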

For $c=1$ the picture is clear: 
the locus $\Delta_{d,\F}$ has dimension precisely $2n-1-d$ and, since in this case its points are all rationally equivalent, it is the only positive dimensional orbit under rational equivalence.
When $c\geq 2$ our result leads to the following.
\smallskip

\noindent
{\bf Question.} When $
  2n-c> d$, is it possible to give a  geometric description of the rational equivalence orbits of the largest dimension on a very general complete intersection of $\mathbb{P}^n$ of type $(d_1,\dots,d_c)$?

\smallskip
All the above results rely on the following theorem or are obtained along its proof.
\begin{theoremalpha}\label{thm:main}
Let $n,c,k$ be positive integers, let $a\in\{0,1\}$, and let $X\subset \mathbb{P}^n$ be a very general complete intersection of type $(d_1,\dots,d_c)$, with 
\begin{equation}\label{eq:bound lineare}
d\geq n+1+\left\lfloor\frac{n-c-k+1}{2}\right\rfloor   + a
\end{equation}
and 
\begin{equation}\label{eq:bound quadratico}
\sum_{i=1}^c \frac{d_i(d_i+1)}{2}-a(d_{1}+d_c)\geq 3n-k-1.  
\end{equation}
If $Y\subset X$ is a $k$-dimensional subvariety such that 
\begin{equation}\label{eq:van}
h^0 \left(\widetilde{Y}, K_{\widetilde{Y}}\otimes \nu^*\mathcal O_{{Y}}(-a)\right)=0,
\end{equation}
where $\nu\colon\widetilde{Y}\longrightarrow Y$ is a desingularization, then $Y$ is contained in the union of the lines lying on $X$.
\end{theoremalpha}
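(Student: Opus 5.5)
We follow the Voisin / Clemens–Ran / Pacienza strategy for hypersurfaces, adapted to codimension $c$. Let $S=\prod_i H^0(\bP^n,\mathcal O(d_i))$ and $\mathcal X\subset \bP^n\times S$ be the universal complete intersection. Suppose that for very general $\F$ there is a $k$-dimensional $Y_\F\subset X_\F$ satisfying \eqref{eq:van} and not contained in the locus of lines. A standard Hilbert scheme argument then produces, after an étale base change $U\to S$, a family $\mathcal Y\to U$ with $\mathcal Y\subset \mathcal X_U$ and fibers $Y_\F$. We use $\GL(n+1)$-equivariance and, if needed, replace $U$ by a $\GL(n+1)$-invariant subvariety dominating $S/\GL(n+1)$, so that $\mathcal Y$ is invariant.

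The first step globalizes the vanishing \eqref{eq:van} into positivity. Consider the normal sequences for $\widetilde{\mathcal Y}\to \mathcal X_U\to \bP^n\times U$, and in particular the bundle $\mathcal E=\bigoplus_i \mathcal O(d_i)$ together with the natural evaluation maps. One shows that the twisted relative tangent bundle $T_{\mathcal X/\bP^n}(1)$, restricted to $\mathcal Y$, is generically generated by global sections. This uses Voisin's lemma on the kernel of the multiplication map $S_{d_i}\otimes \mathcal O\to \mathcal O(d_i)$ being globally generated after twisting by $\mathcal O(1)$; its codimension-$c$ analogue is a direct sum of these. Taking wedge powers and adjunction gives a nonzero map
$$\bigwedge^{\dim U - m} T_{U}\otimes \mathcal O(\cdots)\longrightarrow K_{\widetilde{Y}_\F}\otimes \nu^*\mathcal O(-a)$$
for very general $\F$, with $m$ the excess. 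The numerical bound \eqref{eq:bound lineare} is exactly what makes the resulting twist $d-n-1-\lfloor (n-c-k+1)/2\rfloor-a\ge 0$ nonnegative on the subsheaf one can produce. So if the map is nonzero at a generic point of $Y_\F$, we contradict \eqref{eq:van}.

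The second step, the heart of the proof, identifies where the map degenerates. The sections are generated as long as the relevant vector fields on $\mathcal X$ span enough of the tangent directions to $\mathcal Y$. The failure locus is controlled by points $x$ such that the tangent space of $Y$ at $x$ meets the directions of lines with high contact with all the $V(F_i)$. Here we bring in the Riedl–Yang Grassmannian technique. We consider the incidence of pairs $(x,\Lambda)$ with $\Lambda\ni x$ a linear space of dimension about $\lceil (n-c-k+1)/2\rceil$, and we compute the codimension of the condition that $X_\F$ contains a line through $x$ in $\Lambda$. The truncated Taylor expansions of the $F_i$ at $x$ impose $\sum_i (d_i+1)$ conditions on lines, and $\sum_i \binom{d_i+1}{2}$-type conditions on the osculating data. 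This produces the quadratic inequality \eqref{eq:bound quadratico}; the term $a(d_1+d_c)$ accounts for the extra twist. Comparing dimensions, we show that if a generic point of $Y$ is not on a line of $X$, then enough sections survive, which is a contradiction.

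I expect the main obstacle to be this second step: showing that the degeneracy locus of the generated map is exactly contained in the lines locus, uniformly in $c$. With several equations of different degrees, the "osculating" conditions are not symmetric, and in the Riedl–Yang reduction one must control simultaneously the contact orders with all $V(F_i)$. My first attempt would be to treat the conditions from each $F_i$ as independent conditions on the Grassmannian of $(\Lambda,x)$, proving independence for general $\F$ by a monomial/explicit example. Once that is done, the dimension count with \eqref{eq:bound lineare} and \eqref{eq:bound quadratico} should close the argument.
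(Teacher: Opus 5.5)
Your outline names the right tools, but the steps that carry the proof are missing, and what you put in their place would not work. The wedge-power/adjunction argument with the globally generated bundles $M_{d_i}(1)$ directly yields sections of $K_{\widetilde Y_{\mathbf F}}\otimes\nu^*\mathcal O(-a)$ only in Ein's range $d\geq 2n-c-k+1+a$. There is no fixed subsheaf on which a twist of $\lfloor (n-c-k+1)/2\rfloor$ suffices. In the paper this twist comes from the multiplication maps $\mu_s\colon (S^1_y)^{\oplus s}\to S^{\mathbf d}_y/T$, $(L_1,\dots,L_s)\mapsto\sum_i L_i\mathbf P_i$, with $T=T^{\mathrm{vert}}_{\mathcal Y,(y,\mathbf F)}$. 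One always has $h^0(\omega_{\widetilde Y_{\mathbf F}}(-d+n+1+s_{\mathcal T}))\neq0$, so your numerology is right exactly when $s_{\mathcal T}\leq\lfloor (n-c-k+1)/2\rfloor$. All the content is in the complementary case. There, \eqref{eq:van} and \eqref{eq:bound lineare} force a jump $s_{\mathcal T}-s'_{\mathcal T}\geq2$. Lemma~\ref{lem:retta} converts this jump into a unique line $\ell\ni y$ with $S^{\mathbf d}_\ell\subseteq T+\sum_iS^1_y\cdot\mathbf P_i$ and $\dim T/(T\cap S^{\mathbf d}_\ell)=d-s_{\mathcal T}$. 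This line is read off the vertical (parameter) directions and a priori has no contact with $X_{\mathbf F}$, so the contact must be proved. The paper first shows that $T_{\mathcal Y_0}\cap\mathcal L$ is an integrable distribution along whose leaves $\ell$ is constant (Proposition~\ref{prop:b_r}). It then combines invariance under the stabilizer of $(y,\ell)$ with $\dim\rho(\mathcal Y_{y,\ell})=d-s_{\mathcal T}-n+1\leq1+a$. This forces $\mathbf F_{|\ell}=(\alpha_iX_1^{d_i})$ with $\alpha_i=0$ unless $d_i=r$ (resp.\ $\mathbf F_{|\ell}=(\alpha_iX_1^{r}L^{d_i-r})$), i.e.\ $Y_{\mathbf F}\subset\Delta_{r,\mathbf F}$ (resp.\ $\Lambda_{r,\mathbf F}$). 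Your sentence about the tangent space of $Y$ meeting directions of lines with high contact guesses this conclusion but supplies no mechanism. Without it there is no locus to feed into a Grassmannian argument.

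Your version of the Riedl--Yang step (linear spaces $\Lambda\ni x$ of dimension about $(n-c-k+1)/2$, plus a count of Taylor conditions for lines of $X_{\mathbf F}$ through $x$) is not that technique. No count of conditions on lines can detect the hypothesis \eqref{eq:van} on $Y$; a positivity input is indispensable. In the paper one builds smooth models $\widetilde\Delta_{r,\mathbf F}$, $\widetilde\Lambda_{r,\mathbf F}$ as zero loci of sections of globally generated bundles on the universal line (or on $\mathrm{Bl}_{\mathrm{Diag}}(\bP^n\times\bP^n)$). Adjunction shows that the $L$-coefficient of their canonical class is $m-n$. Here $\sum_id_i(d_i+1)/2$ enters as $c_1(\mathrm{Sym}^{d_i}\mathcal S^\vee)$, not as a number of osculating conditions. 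Hence $K$ (resp.\ $K-H_1$) is effective for complete intersections in $\bP^s$ with $s\leq m$. By Lemma~\ref{lem:folk}, no $k$-dimensional $W$ with $p_g(\widetilde W)=0$ (resp.\ $h^0(K_{\widetilde W}-\mu^*H_1)=0$) then passes through a very general point of the locus. Next, realize $X\subset\bP^n$ and $X'\subset\bP^m$ as linear sections, through a fixed line, of one complete intersection in $\bP^M$. Proposition~\ref{prop:RY} shows that the codimension of the bad locus drops by at most one per dimension, so it is $\geq m-n+1$ in $\bP^n$. On the other hand $Y$ forces it to be $\leq t-k$, with $t$ the dimension of the model. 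Inequality \eqref{eq:bound quadratico} is precisely $m-n-t+k\geq0$ in each case; the term $a(d_1+d_c)$ comes from the degenerate bicontact cases. This, not independence of Taylor conditions for general $\mathbf F$, is what closes the argument.
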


We point out that condition (\ref{eq:bound quadratico}) can be slightly weakened (cf. Remarks \ref{rem:(1.2-0)} and \ref{rem:(1.2-1)}), and it can  be further improved in a more substantial way when $a=0$ and all the $d_i$'s are distinct (see Theorem \ref{thm:main-distinc}). 
We also note that Theorem \ref{thm:main} extends to complete intersections analogous results in the case of hypersurfaces (cf. \cite[Theorem 1.1]{C03}, \cite[Theorems 0.1 and A.1]{CR04} and \cite[Theorem 1.1]{P04}). Moreover, the case $c=1$ and $a=0$ has been recently improved in \cite{Abe23}.

\smallskip
Voisin's approach in the case of hypersurfaces \cite{V1,V2} was greatly extented by Clemens \cite{C03}, leading to the results by Clemens-Ran \cite{CR04} and the second author \cite{P04}, and it has been slightly refined more recently in \cite{Abe23}. 
Our main contribution here is, on the one hand, to generalize this approach to complete intersections, overcoming the technical difficulties that arise along the way.
On the other hand, this approach breaks down for $c>1$ when $a=1$, or when $a=0$ and some of the $d_i$'s coincide, see Remark \ref{rmk:nonfunziona col -1}.
To circumvent this obstacle we adapt to our setting the Grassmannian technique introduced by Riedl and Yang \cite{RY20} (see also \cite{RY22, CoR22}).

To give more details, we consider the universal family $\mathcal{X}\longrightarrow U$ of complete intersections, where $U$ is the open set parameterizing $c$-tuples $\F=(F_1,\dots,F_c)$ of homogeneous polynomials of degrees $(d_1,\dots,d_c)$, such that $X_{\F}\subset \bP^n$ is a 
complete intersection.
Up to base-changing $\mathcal{X}$ by a suitable \'etale cover of $U$, we may consider a subfamily $\mathcal{Y}\subset \mathcal{X}\subset \bP^n\times U$ such that the fibre over a general $\F\in U$ corresponds to a $k$-dimensional subvariety $Y_{\F}\subset X_{\F}\subset \bP^n$ which satisfies \eqref{eq:van}.
Denoting by $\pi\colon{\mathcal Y}\longrightarrow {\mathbb P^n}$ the first projection, we define the vertical part of the tangent sheaf $T_{\mathcal Y}$ by the exact sequence
$$
0\longrightarrow T^\textrm{vert}_{\mathcal Y}\longrightarrow  T_{\mathcal Y}\ {\buildrel {\pi_*}\over \longrightarrow} T_{\mathbb P^n}\longrightarrow 0.
$$
By generalizing to complete intersections the multiplication maps introduced in \cite{C03} and using \eqref{eq:bound lineare}, we prove that for general $(y,\F)\in \mathcal{Y}$, there exists a unique line $\ell_{(y,\F)}\subset \bP^n$ passing through $y\in Y_{\F}$ such that $T^\textrm{vert}_{\mathcal Y,(y,\F)}\subset U$ has large intersection with the space $\bigoplus_{i=1}^c H^0\big(\mathbb{P}^n, \mathcal{I}_{\ell_{(y,\mathbf{F})}}(d_i)\big)$ of $c$-tuples vanishing on $\ell_{(y,\F)}$.

As we vary $(y,\F)\in \mathcal{Y}$, the spaces  $T^\textrm{vert}_{\mathcal Y,(y,\F)}\cap \bigoplus_{i=1}^c H^0\big(\mathbb{P}^n, \mathcal{I}_{\ell_{(y,\mathbf{F})}}(d_i)\big)$ give a distribution on $\mathcal{Y}$, that we prove to be integrable (see Proposition \ref{prop:b_r}).
Using this fact and the description of the leaves of the corresponding foliation, we prove that if $a=0$, there exists an integer $r\geq 1$ such that the line $\ell_{(y,\F)}$ is contained in any hypersurface $V(F_i)$ with $d_i\neq r$, and $\ell_{(y,\F)}$ intersects $V(F_i)$ only at $y$ if $d_i=r$. 
Therefore, we deduce that $Y_{\F}$ is contained in the locus $\Delta_{r,\bf F}$ defined in \eqref{eq:deltraintro}.
Similarly, When $a=1$, we prove that  $Y_{\F}$ is contained in the locus
$$
\Lambda_{r,\F}:=\left\{x\in X_\F\left|\begin{array}{l}
     \textrm{there exist a line }\ell \textrm{ through } x \textrm{ and a point }x'\in \ell \textrm{ such that} \\ \ell \subset V(F_i)\textrm{ if } d_i<r, \textrm{ and } \ell \cdot V(F_i) \geq r x +(d_i-r) x'\textrm{ if } d_i \geq r    \end{array} \right.\right\}
$$
for some integer $r\geq 1$ (see Proposition \ref{prop:Delta_r}).

Then we focus on certain varieties dominating $\Delta_{r,\bf F}$ and $\Lambda_{r,\F}$, that we construct as zero loci of sections of globally generated vector bundles. 
In particular, we describe their canonical bundle, whose positivity is granted by \eqref{eq:bound quadratico}.
So we apply the Grassmannian technique introduced in \cite{RY20} to these loci, and we conclude that $\Delta_{r,\bf F}$ and $\Lambda_{r,\F}$ cannot contain subvarieties $Y_{\F}$ as above lying outside the union of the lines in $X_{\F}$, which is the assertion of Theorem \ref{thm:main}. 

\smallskip
The paper is organized as follows.
In Section \ref{s: preliminaries} we introduce our notation and we prove some preliminary results.
Section \ref{s:contact} is devoted to the study of various loci dominating $\Delta_{r,\bf F}$ and $\Lambda_{r,\F}$.
In Section \ref{sec:CVmaps} we study the multiplication maps through which a unique line (and an unexpectedly large part of its homogeneous ideal)   will appear at each point $(y,\F)$ of a subfamily $\mathcal Y$ satisfying \eqref{eq:van}. In Section \ref{s:int} we prove the integrability of the associated distribution, establishing along the way Theorem \ref{thm:CLS+}. In Section \ref{s:proof Thm E}  we put the Grassmannian technique to work in our situation and prove our main result Theorem \ref{thm:main}. In the last section we show how to deduce the other results from Theorem \ref{thm:main}.

\section{Setting and preliminaries}\label{s: preliminaries}

\subsection{Notation}\label{subs:not}
We work throughout over the field $\mathbb{C}$ of complex numbers.
We say that a property holds for a \emph{general} (resp. \emph{very general}) point ${x\in X}$ if it holds on a Zariski open nonempty subset of $X$ (resp. on the complement of the countable union of proper subvarieties of $X$).

Given an irreducible variety $Y$ endowed with a morphism $f\colon Y\longrightarrow \mathbb{P}^n$ and a coherent sheaf $\mathcal{F}$ on $Y$, for any $i\in \mathbb{Z}$ we set 
$$
\mathcal{F}(i):=\mathcal{F} \otimes f^{*}\mathcal{O}_{\mathbb{P}^n}(i).
$$

Consider two integers $n\geq c\geq 1$ and a $c$-tuple $\mathbf{d}=(d_1,\dots, d_c)$ of integers $1\leq d_1\leq \cdots \leq d_c$, with
$$
d:=|\mathbf{d}|=d_1+\dots+d_c.
$$ 
For any $i=1,\dots,c$, we set
$$
S^{d_i}:=H^0\left(\mathbb{P}^n,\mathcal{O}_{\mathbb{P}^n}(d_i)\right), \quad
S^{\mathbf{d}}:= \bigoplus_{i=1}^c S^{d_i}\quad \textrm{and}\quad N:= \dim{S^{\mathbf{d}}}.
$$
Hence $\F=(F_1,\ldots, F_c)\in S^\d$ will denote a $c$-tuple of homogeneous polynomials of multidegree $(d_1,\ldots, d_c)$. 
Given a subvariety $Z\subset \mathbb{P}^n$, we also consider the subspaces of sections vanishing on $Z$,
\begin{equation}\label{eq:S_Z}
S_Z^{d_i}:=H^0\left(\mathbb{P}^n,\mathcal{O}_{\mathbb{P}^n}(d_i)\otimes \mathcal{I}_Z\right)=H^0\left(\mathbb{P}^n, \mathcal{I}_Z(d_i)\right) \quad \textrm{and}\quad
S_Z^{\mathbf{d}}:= \bigoplus_{i=1}^c S_Z^{d_i}.
\end{equation}
In particular, for $x\in \mathbb{P}^n$,  $S_x^{d_i}:=H^0\left(\mathbb{P}^n,\mathcal{O}_{\mathbb{P}^n}(d_i)\otimes \mathcal{I}_x\right)$ will denote the subspace of sections vanishing on $x$, and we will set
$$
S_x^{\d}:= \bigoplus_{i=1}^c S_x^{d_i}. 
$$
For any integer $d_i\geq 1$, we will consider the kernel of the evaluation map
\begin{equation}\label{eq:M_d_i}
0\longrightarrow M_{d_i}\longrightarrow S^{d_i}\otimes \mathcal{O}_{\mathbb{P}^n}\longrightarrow \mathcal{O}_{\mathbb{P}^n}(d_i)\longrightarrow 0
\end{equation}
as well as 
\begin{equation}\label{eq:M_d}
0\longrightarrow M_{\d}\longrightarrow S^{\d}\otimes \mathcal{O}_{\mathbb{P}^n}\longrightarrow \bigoplus_{i=1}^c \mathcal{O}_{\mathbb{P}^n}(d_i)\longrightarrow 0.
\end{equation}
Notice that we have
$$
M_{\d} = \bigoplus_{i=1}^c M_{d_i}.
$$

Throughout the paper we will consider the universal complete intersection ${\mathcal X}\subset {\mathbb P^n}\times S^\d$ of multidegree $\d=(d_1,\ldots,d_c)$, and $X_\F\subset {\mathbb P^n}$ will denote the fibre of ${\mathcal X}$ over $\F\in S^\d$, i.e. the complete intersection  defined by the homogeneous polynomials $\F=(F_1,\ldots, F_c)$. 

Let $U\longrightarrow S^\d$ be an \'etale map, let ${\mathcal X}_U\longrightarrow U$ be the family induced by the base change, and
let ${\mathcal Y}\subset {\mathcal X}_U$ be a universal, reduced and 
irreducible subscheme of relative dimension $k$.
For our purposes we may obviously assume $\mathcal Y$ to be invariant under some 
lift of the natural action of $\textrm{GL}(n+1)$ on ${\mathbb P^n}\times S^\d$:
for any $(x,\F)\in {\mathcal Y}$ and $g\in \textrm{GL}(n+1)$, we set
$$
g(x,\F)= (g(x), \left(g^{-1})^*\F\right)\coloneqq \left((g^{-1})^*F_1,\ldots ,(g^{-1})^*F_c\right).
$$
Let $\widetilde {\mathcal Y}\longrightarrow {\mathcal Y}$ be a desingularization and let $\widetilde{\mathcal Y}\stackrel{j}{\longrightarrow}{\mathcal X}_U$ be the natural induced map. In the following, by abuse of notation, we will  omit the \'etale base change in order to make the notation less heavy.

\subsection{The vertical tangent space}\label{subs:vert}
Let $\pi\colon{\mathcal X}\longrightarrow {\mathbb P^n}$ be the projection on the first component and let $T^\textrm{vert}_{\mathcal X}$ (resp. $T^\textrm{vert}_{\mathcal Y}$) be the vertical part of $T_{\mathcal X}$ (resp. $T_{\mathcal Y}$) with respect to $\pi$, i.e. $T^\textrm{vert}_{\mathcal X}$ (resp. $T^\textrm{vert}_{\mathcal Y}$) is the sheaf defined by
$$
 0\longrightarrow T^\textrm{vert}_{\mathcal X}\longrightarrow T_{\mathcal X}{\buildrel {\pi_*}\over
 \longrightarrow} T_{\mathbb P^n}\longrightarrow 0 \quad ({\text{resp.}}\ \ \  0\longrightarrow T^\textrm{vert}_{\mathcal Y}\longrightarrow 
 T_{\mathcal Y}\ {\buildrel {\pi_*}\over \longrightarrow} T_{\mathbb P^n}\longrightarrow 0).
$$
Notice that from the surjectivity of the map $T_{\mathcal Y}\ {\buildrel {\pi_*}\over 
\longrightarrow} T_{\mathbb P^n}$, we deduce that 
$$
 \codim_{\ T^\textrm{vert}_{{\mathcal X},(y,F)}}\ T^\textrm{vert}_{{\mathcal Y},(y,F)}
 =\codim_{\mathcal X}{\mathcal Y}=n-k-c.
$$
By abuse of notation, we denote by $N_{\widetilde{\mathcal Y}/{\mathcal X}}$ the normal bundle to the map $j\colon\widetilde{\mathcal Y}\to {\mathcal X}$.
Notice also that we have
\begin{equation}\label{eq:normal}
   0\longrightarrow  T^\textrm{vert}_{\widetilde{\mathcal Y}}\longrightarrow j^*T^\textrm{vert}_{{\mathcal X}}\longrightarrow N_{\widetilde{\mathcal Y}/{\mathcal X}}\longrightarrow 0. 
\end{equation}
From the inclusion ${\mathcal X}\hookrightarrow {\mathbb P^n}\times S^\d$, we get the exact sequence
$$
 0\longrightarrow {T_{\mathcal X}}_{|X_\F}\longrightarrow 
 {T_{\mathbb P^n}}_{|X_\F}\oplus (S^\d\otimes {\mathcal O}_{X_\F})
 \longrightarrow \bigoplus_{i=1}^c{\mathcal O}_{X_\F}(d_i)\longrightarrow 0,
$$
which combined with (\ref{eq:M_d}) gives
\begin{equation}\label{eq:M-Tvert}
  0\longrightarrow {M_\d}_{|X_\F}\longrightarrow {T_{\mathcal X}}_{|X_\F}
 \longrightarrow {T_{\mathbb P^n}}_{|X_\F}\longrightarrow 0.
\end{equation}
 In other words 
${M_\d}_{|X_\F}$ identifies to the 
vertical part of $T_{\mathcal X}\otimes {\mathcal O}_{X_\F}$ with respect to the 
projection onto ${\mathbb P^n}$.

Another important consequence of the $\textrm{GL}(n+1)$-invariance of $\mathcal Y$ is the following. 
\begin{lemma}\label{lem:jac}
  At any point $(y, \F)\in \mathcal Y$ with $\F=(F_1,\ldots,F_c)$, the vertical tangent space   
  $
 T^{\textrm{vert}}_{{\mathcal Y},(y,\F)}$ contains $\big\langle  S^1_y \cdot J_\F^{d-1}, \F \big\rangle $, 
where $J_\F^{\d-1}\coloneqq\bigoplus_{i=1}^c J_{F_i}^{d_i-1}$ and $J_{F_i}^{d_i-1}$ is the homogeneous Jacobian ideal of $F_i$ in degree $d_i-1$, i.e. the homogeneous ideal generated by all partial derivatives of $F_i$.
\end{lemma}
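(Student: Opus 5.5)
The plan is to exploit the symmetries of the universal family $\mathcal Y$ and then push the resulting diagonal tangent vectors into the individual summands of $S^\d=\bigoplus_i S^{d_i}$. I will first produce the obvious vertical vectors, namely $\F$ itself and the diagonal Jacobian vectors. I will then separate the components, which is the real content of the lemma as stated: the components $L_i\,\partial_{j_i}F_i$ are allowed to vary independently in $i$.

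\emph{Step 1 (the vector $\F$ and the diagonal part).} The homotheties $t\cdot\mathrm{Id}\in\GL(n+1)$ fix every point of $\bP^n$ and send $F_i\mapsto t^{-d_i}F_i$. Since $\mathcal Y$ is $\GL(n+1)$-invariant, the curve $t\mapsto (y,(t^{-d_1}F_1,\dots,t^{-d_c}F_c))$ lies in $\mathcal Y$. Its tangent vector at $t=1$ is vertical and equals, up to sign, $(d_1F_1,\dots,d_cF_c)$. By Euler's formula, $d_iF_i=\sum_j x_j\partial_jF_i$, so this vector is itself built from Jacobian vectors. To get $\F$ on the nose, I also use the rescalings $F_i\mapsto\lambda_iF_i$. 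These preserve $X_\F$, so we may and do take $\mathcal Y$ invariant under $\GL(n+1)\times(\mathbb C^*)^c$, the latter acting trivially on $\bP^n$. Each $(0,\dots,F_i,\dots,0)$ is then vertical, hence so is $\F$.

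Next take $L\in S^1_y$ and the vector field $v=L\,\partial_j$ on $\bC^{n+1}$, i.e. the element $E=L\otimes e_j$ of $\mathfrak{gl}(n+1)$. Its flow $g_t$ fixes $y$ because $L(y)=0$. Hence $t\mapsto(y,(g_t^{-1})^*\F)$ is a curve in $\mathcal Y$ with $\pi$ constant, and its derivative is the vertical vector $-(L\partial_jF_1,\dots,L\partial_jF_c)$.

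\emph{Step 2 (separating components).} The key extra symmetry is that $(\mathbb C^*)^c$ lets us weight the summands differently. I will use the group $\GL(n+1)\times(\mathbb C^*)^c$ acting on $\bP^n\times S^\d$, under which $\mathcal Y$ is invariant. The stabilizer of $y$ in this group acts on the vertical tangent space $T^{\rm vert}_{\mathcal Y,(y,\F)}$ only after moving the base point. So instead I will work on the orbit $O=\{(y,h\cdot\F)\}\subset\mathcal Y$, where $h$ ranges over the stabilizer $H$ of $y$, and compute the image of the differential of the orbit map $H\to S^\d$, $h\mapsto h\cdot\F$, at the identity.

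That image is the span of the vectors from Step 1 together with $\bigoplus_i\mathbb C F_i$. It is still diagonal in the Jacobian directions. To break the diagonal I will iterate: differentiate the diagonal vector field $w_E(\F)=(E\cdot F_i)_i$ along the curve $(y,(\lambda_1F_1,\dots,\lambda_cF_c))$. Because $w_E$ is linear in $\F$, one has $w_E(\lambda\F)=(\lambda_iE\cdot F_i)_i$. So the tangent spaces along the torus orbit contain the vectors $(\lambda_iE\cdot F_i)_i$ for all $\lambda\in(\mathbb C^*)^c$. The aim is to transport these back to $(y,\F)$ while keeping the weights $\lambda_i$. Taking suitable linear combinations of the transported vectors over $c$ distinct choices of $\lambda$ (a Vandermonde argument) would then isolate each single component $(0,\dots,E\cdot F_i,\dots,0)$, and by linearity give all of $\bigoplus_i S^1_y\cdot J_{F_i}^{d_i-1}$.

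\emph{Main obstacle.} The difficulty is precisely the transport in Step 2. The differential of $\lambda$ maps $T_{(y,\F)}\mathcal Y$ to $T_{(y,\lambda\F)}\mathcal Y$ by $(G_i)\mapsto(\lambda_iG_i)$. It therefore sends the diagonal vector at $\F$ to the diagonal vector at $\lambda\F$, and gives nothing new by itself. What I would try first is to use the commutator of the torus action with the $\GL(n+1)$-action inside the tangent sheaf of $\mathcal Y$. The idea is to view $w_E$ and the Euler-type fields $\xi_i=(0,\dots,F_i,\dots,0)$ as vector fields tangent to $\mathcal Y$ along the orbit, and to compute Lie brackets such as $[\xi_i,w_E]$, which must again be tangent to $\mathcal Y$. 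I would check whether these brackets produce the single-component vectors $(0,\dots,E\cdot F_i,\dots,0)$ at the point $(y,\F)$ itself.

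If that computation turns out to give only multiples of the diagonal vectors again, the fallback is to restrict the claim to the diagonal image of $S^1_y\otimes\mathfrak{gl}$. This is what the $\GL(n+1)$-invariance genuinely yields, and it is the point where I expect the argument to need an additional input about $\mathcal Y$.
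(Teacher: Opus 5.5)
Your Step~1 is correct, and it is exactly what the paper's argument legitimately gives. For $L\in S^1_y$ the flow of $L\,\partial/\partial X_j$ fixes $y$, so $\big(L\frac{\partial F_1}{\partial X_j},\dots,L\frac{\partial F_c}{\partial X_j}\big)$ is vertical and tangent to $\mathcal Y$. You are also right that scalar matrices only produce $(d_1F_1,\dots,d_cF_c)$, so getting $\F$ itself requires invariance under rescaling the $F_i$, which the paper uses tacitly.

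Read literally, with $J^{\d-1}_\F=\bigoplus_i J^{d_i-1}_{F_i}$, the statement is not proved: your proposal breaks down at Step~2, and that step cannot be repaired. The $(\mathbb C^*)^c$-action commutes with the $\GL(n+1)$-action, so $[\xi_i,w_E]=0$ identically, since both are linear vector fields given by commuting block-diagonal operators on $S^\d$. Pushing forward by $\lambda$ only carries diagonal vectors to diagonal vectors. In general, invariance under a group puts nothing into $T_{\mathcal Y}$ beyond the image of its Lie algebra. Here that image is $\{(A\cdot F_k+\lambda_kF_k)_k\}$, which is diagonal in the Jacobian part.

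No other argument can close the gap either, because the component-separated statement fails for invariant families. Take $c=2$ (say $d_1,d_2\geq 3$), let $\mathrm{H}$ denote the Hessian matrix, and set $\Phi(\F)\coloneqq\mathrm{tr}\big(\mathrm{adj}(\mathrm{H}F_1)\,\mathrm{H}F_2\big)$. Then $\Phi(g^*F_1,g^*F_2)=\det(g)^2\,g^*\Phi(\F)$ and $\Phi$ is bihomogeneous. Hence $\mathcal Y\coloneqq\{(x,\F): x\in X_\F,\ \Phi(\F)(x)=0\}$ is $\GL(n+1)\times(\mathbb C^*)^2$-invariant, and at a general point $T^{\mathrm{vert}}_{\mathcal Y,(y,\F)}=\{\dot\F\in S^\d_y : D\Phi_\F(\dot\F)(y)=0\}$. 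Since the diagonal vector is tangent,
\[
D\Phi_\F\Big(L\tfrac{\partial F_1}{\partial X_j},0\Big)(y)=-D\Phi_\F\Big(0,L\tfrac{\partial F_2}{\partial X_j}\Big)(y)=-2\,v_L^{T}\,\mathrm{adj}\big(\mathrm{H}F_1(y)\big)\,\mathrm{H}F_2(y)\,e_j,
\]
where $v_L$ is the coefficient vector of $L$. Both Hessians are invertible at a general point, so this is nonzero for suitable $L\in S^1_y$ and $j$.

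The paper's proof obtains the separated version only through the sentence ``$\GL(n+1)$ acts on each summand, so it suffices to understand the orbit on each summand''. That silently replaces the tangent space $\{(A\cdot F_1,\dots,A\cdot F_c)\}$ of the diagonal orbit by $\bigoplus_i J^{d_i}_{F_i}$. This is precisely the step you could not justify, and it is not justified.

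What does hold is your fallback: the span of the tuples $L\cdot\frac{\partial\F}{\partial X_j}$ with $L\in S^1_y$, together with $\F$. This is the form in which Lemma~\ref{lem:jacG1} is stated and in which it is used in Proposition~\ref{prop:Delta_r}. Granting the rescaling invariance, your Step~1 is a complete proof of that corrected statement.
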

\begin{proof}
As $\mathcal Y$ is $\textrm{GL}(n+1)$-invariant, its tangent space (resp. its vertical tangent space) contains the tangent space to the $\textrm{GL}(n+1)$-orbit of the point $(y, \F)$ (resp. the intersection of this orbit with $T^{\textrm{vert}}_{{\mathcal X},(y,\F)}= S^\d_y$. 
Since $\textrm{GL}(n+1)$ acts on each summand of $S^\d=\bigoplus_i S^{d_i}$, it is sufficient to understand the tangent space to the orbit on each summand. 
By \cite[Remarque 18.16]{Vbook} the tangent space to the $\textrm{GL}(n+1)$-orbit of a polynomial $F_i$ is the homogeneous Jacobian ideal $J^{d_i}_{F_i}$ of $F_i$ in degree $d_i$, whose intersection with the vertical part $S^{d_i}_y$ is 
$\big\langle  S^1_y \cdot J_{F_i}^{d_i-1}, F_i \big\rangle$ and the conclusion follows.
\end{proof}
We now prove  a variant of the above that will be useful in the proof of our main theorem. 
As before, we consider a point $(y, \F)\in \mathcal Y$, with $\F=(F_1,\dots,F_c)$, and a line $\ell \ni y$. Without loss of generality we may choose coordinates so that $y=[1:0:\ldots:0]$ and $\ell =\{ X_2=\dots=X_n=0\}$. 
Let $G_{(y,\ell)}\subset \textrm{GL}(n+1)$ be the stabilizer of $(y,\ell)$.
We have the following.    \begin{lemma}\label{lem:jacG1} 
  In the above notation, if   $\mathcal Z\subset \mathcal{Y}$ is a $G_{(y,\ell)}$-invariant subvariety, then  the vertical tangent space   
  $T^{\mathrm{vert}}_{{\mathcal Z},(y,\F)}$ contains $\big\langle  S^1_y  \cdot \frac{\partial \F}{\partial X_0} , S^1_y  \cdot \frac{\partial \F}{\partial X_1} ,S^1_\ell  \cdot \frac{\partial \F}{\partial X_2},\ldots, S^1_\ell  \cdot \frac{\partial \F}{\partial X_n} , \F \big\rangle $, 
where $\frac{\partial \F}{\partial X_i}:= \left(\frac{\partial F_1}{\partial X_i},\ldots, \frac{\partial F_c}{\partial X_i}\right)$.
\end{lemma}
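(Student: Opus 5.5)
The argument mirrors the proof of Lemma \ref{lem:jac}, with $\textrm{GL}(n+1)$ replaced by the stabilizer $G_{(y,\ell)}$. The idea is that the tangent space to the $G_{(y,\ell)}$-orbit of $(y,\F)$ lies in $T_{\mathcal Z,(y,\F)}$. Its intersection with the vertical part $T^{\mathrm{vert}}_{\mathcal X,(y,\F)}=S^\d_y$ is then contained in $T^{\mathrm{vert}}_{\mathcal Z,(y,\F)}$. So the task reduces to computing this orbit tangent space explicitly in the chosen coordinates $y=[1:0:\dots:0]$, $\ell=\{X_2=\dots=X_n=0\}$.

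\textbf{Step 1: the Lie algebra of the stabilizer.} The Lie algebra $\mathfrak g$ of $G_{(y,\ell)}$ consists of matrices $A=(a_{ij})$ acting on $\mathbb C^{n+1}$ whose associated linear vector field $v_A=\sum_{i,j} a_{ij}X_j\frac{\partial}{\partial X_i}$ preserves the line through $e_0$ and the plane $\langle e_0,e_1\rangle$. Concretely, $v_A=\sum_i L_i\frac{\partial}{\partial X_i}$, where each $L_i$ is a linear form. Fixing $y$ up to scalar means $L_i(e_0)=0$ for $i\ge1$. Preserving $\langle e_0,e_1\rangle$ means $L_i(e_1)=0$ for $i\ge 2$. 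Hence:
\begin{itemize}
\item $L_0$ is an arbitrary linear form;
\item $L_1\in S^1_y$;
\item $L_i\in S^1_\ell$ for $i\ge2$.
\end{itemize}

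\textbf{Step 2: the orbit tangent space.} Differentiating $t\mapsto (e^{tA}y,\,(e^{-tA})^*\F)$ at $t=0$ gives the tangent vector $\big(v_A(y),\,\pm(v_A F_1,\dots,v_A F_c)\big)$, with $v_AF_j=\sum_i L_i\,\partial F_j/\partial X_i$. The sign convention is irrelevant, since we are computing a linear span.

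\textbf{Step 3: intersecting with the vertical part.} A tangent vector is vertical exactly when $v_A(y)=0$ in $T_y\mathbb P^n$, i.e.\ $v_A$ is tangent to the scalar line at $y$. Since $L_1(e_0)=L_i(e_0)=0$ for $i\ge 1$ is already forced, this amounts to no further restriction beyond $L_0$ being arbitrary. We now split $L_0=\lambda X_0+L_0'$ with $L_0'\in S^1_y$.
\begin{itemize}
\item The term $\lambda X_0\,\partial\F/\partial X_0$ is handled by the Euler identity, $\sum_i X_i\partial F_j/\partial X_i=d_jF_j$. Combining with the $X_1$-term and the terms $X_i\partial/\partial X_i$ for $i\ge2$ (note $X_i\in S^1_\ell$ and $X_1\in S^1_y$), the $X_0$-contribution lies in the span of $\F$ and the other listed generators. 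Alternatively, one can include scalar matrices in $\mathfrak g$ directly.
\item Scalar matrices act on $F_j$ by multiplication by $d_j$. The resulting vector is a multiple of $(d_1F_1,\dots,d_cF_c)$, not of $\F$. This is fixed by noting that the Euler vector field $\sum X_i\partial_{X_i}$ belongs to $\mathfrak g$, and that $S^1_y\cdot\partial\F/\partial X_0$ etc.\ give the same combination, so nothing is lost. For $\F$ itself, one uses instead the invariance of $\mathcal Z$ under the fibrewise $\mathbb C^*$-action $\F\mapsto \lambda\F$, which preserves $X_\F$. This action is implicit in the setting: $\mathcal Y$ may be taken invariant under it, as in Lemma \ref{lem:jac}, where $\F$ also appears.
\end{itemize}
Thus the vertical orbit directions are exactly $\sum_i L_i\,\partial\F/\partial X_i$ with $L_0,L_1\in S^1_y$, $L_i\in S^1_\ell$ for $i\ge2$, together with $\F$. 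Their span is the space in the statement.

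\textbf{Main obstacle.} The only delicate point is the bookkeeping in Step 3: separating the genuinely vertical directions from those that move $y$, and justifying that $\F$ lies in the span. For the latter I would argue exactly as in Lemma \ref{lem:jac} (rescaling each equation preserves $X_\F$), and the rest is routine. Finally, one should note that each summand $S^{d_j}$ is acted upon separately, so the computation can be done componentwise.
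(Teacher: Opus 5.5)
Your computation of the orbit directions is the paper's. You differentiate $g_t=I+tA$ with $A_0$ arbitrary, $A_1\in S^1_y$ and $A_i\in S^1_\ell$ for $i\ge 2$. Since the $G_{(y,\ell)}$-orbit stays in the fibre over $y$, all orbit directions $\sum_i A_i\,\partial\mathbf F/\partial X_i$ are vertical, and this gives every listed generator except $\mathbf F$.

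\textbf{The $X_0$-direction.} Your claim that $X_0\,\partial\mathbf F/\partial X_0$ lies in the span of $\mathbf F$ and the other generators is false when the $d_j$ are not all equal: Euler gives $(d_1F_1,\dots,d_cF_c)$, not $\mathbf F$. The sentence about scalar matrices and ``nothing is lost'' does not repair this. You do not need any of it, though. The lemma only asserts a containment, so you can restrict to $L_0\in S^1_y$ and ignore the extra direction.

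\textbf{The generator $\mathbf F$.} This is the real issue. The paper puts $F$ into the orbit tangent space of a single polynomial, by Euler, since $X_0\in S^1$, $X_1\in S^1_y$ and $X_i\in S^1_\ell$. It then argues summand by summand. You are right that for the diagonal action on $c$-tuples this only yields $(d_1F_1,\dots,d_cF_c)$. In general $\mathbf F$ is not an orbit direction at all. For example, take $d_1\neq d_2$ and $V(F_1)$ smooth of degree $\ge 3$. Then the only linear vector fields $v$ with $v\cdot F_1\in\mathbb C F_1$ are multiples of the Euler field, and these send $\mathbf F$ to multiples of $(d_jF_j)_j$, never to $\mathbf F$.

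So invariance under $\mathbf F\mapsto\lambda\mathbf F$ is the right extra input. However, it must be invariance of $\mathcal Z$. Your justification (``$\mathcal Y$ may be taken invariant'') does not pass to an arbitrary $G_{(y,\ell)}$-invariant $\mathcal Z\subset\mathcal Y$; the closure of the $G_{(y,\ell)}$-orbit of $(y,\mathbf F)$ is a counterexample. You should state it as a hypothesis on $\mathcal Z$.

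This hypothesis does hold where the lemma is used, namely $\mathcal Z=\mathcal Y_{y,\ell}$ in Proposition~\ref{prop:Delta_r}. If $\mathcal Y$ is stable under rescaling, then $T^{\mathrm{vert}}_{\mathcal Y,(y,\lambda\mathbf F)}=\lambda\,T^{\mathrm{vert}}_{\mathcal Y,(y,\mathbf F)}=T^{\mathrm{vert}}_{\mathcal Y,(y,\mathbf F)}$ as subspaces of $S^{\mathbf d}_y$. Hence the line $\ell_{(y,\mathbf F)}$ of Lemma~\ref{lem:retta}, which depends only on this subspace, is unchanged, and $\mathcal Y_{y,\ell}$ is stable under rescaling too. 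This matters: the $\mathbf F$ term is exactly what forces all nonvanishing $F_{i|\ell}$ to have the same degree $r$ in part (i) of that proposition.
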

\begin{proof}
    We proceed as in \cite[Remarque 18.16]{Vbook}. First we compute the tangent space to the $G_{(y,\ell)}$-orbit of a homogeneous polynomial $F\in S^d$ in $(n+1)$-variables. It is generated by $\left.\frac{dF_t}{dt}\right|_{t=0}$ where 
    $$F_t=g_t^* F,\quad g_t= I_{n+1}+t A,\quad 0<t\ll 1,
    $$
    and, by the above choice of coordinates, we have
    $$  
    A= 
    \begin{pmatrix}
a_{00} & a_{01} & a_{02}& \cdots & a_{0n}\\
0 & a_{11} & a_{12}& \cdots & a_{1n}\\
0 & 0 & a_{22}& \cdots & a_{2n}\\
\vdots& \vdots& \vdots& \cdots & \vdots&\\
0 & 0 & a_{n2}& \cdots & a_{nn}
\end{pmatrix}.
    $$
    Since
    $$
    g_t^* F(X_0,\ldots, X_n)= F(X_0+t A_0, X_1+tA_1, \ldots, X_n+tA_n),\textrm{ with } A_i=\sum_{j=0}^n a_{ij}X_j, 
    $$
    by computing the derivative we get 
    $$
    \left.\frac{dF_t}{dt}\right|_{t=0}=\left.\frac{d}{dt}\Big(F(X_0+t A_0, X_1+tA_1, \ldots, X_n+tA_n)\Big)\right|_{t=0}=A_0\frac{\partial F}{\partial X} + A_1\frac{\partial F}{\partial X_1}+\ldots +A_n\frac{\partial F}{\partial X_n}. 
    $$
    Since $A_1\in S^1_y$ and $A_i\in S^1_\ell$ for all $i\geq 2$, we deduce that the tangent space to the $G_{(y,\ell)}$-orbit of $F$ equals 
    
    $$\big\langle S^1  \cdot \frac{\partial F}{\partial X_0} , S^1_y  \cdot \frac{\partial F}{\partial X_1} ,S^1_\ell  \cdot \frac{\partial F}{\partial X_2},\ldots, S^1_\ell  \cdot \frac{\partial F}{\partial X_n} , F\, \big\rangle. $$
    As in Lemma \ref{lem:jac} the conclusion now follows by repeating this argument on each summand of $S^\d=\oplus S^{d_i}$ and restricting to the vertical tangent space $S^\d_y=\oplus S^{d_i}_y$.
\end{proof}

As observed by Voisin, a  key point to recover by adjuction global sections of the canonical bundle of a subvariety (see Remark \ref{subs:nonfunziona} for more details) is the following positivity result.
\begin{lemma}[see e.g. {\cite[Proposition 2.2, item (i)]{P03}}]\label{lem:gg}
    For any integer $r>0$ the vector bundle $M_r\otimes {\mathcal O}_{\mathbb P^n}(1)$ is generated by its global sections. 
\end{lemma}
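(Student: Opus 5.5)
Since global generation of a vector bundle is a fibrewise condition, we must show that at every point $x\in\mathbb{P}^n$ the evaluation map $H^0(\mathbb{P}^n, M_r(1))\otimes\mathcal{O}_{\mathbb{P}^n}\to M_r(1)$ is surjective on the fibre at $x$. By the defining sequence \eqref{eq:M_d_i}, the fibre of $M_r$ at $x$ is the space $S^r_x$ of degree-$r$ forms vanishing at $x$. The fibre of $M_r(1)$ at $x$ is therefore $S^r_x\otimes \mathcal{O}(1)_x$.

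The plan is to write down explicit global sections. Tensoring \eqref{eq:M_d_i} by $\mathcal{O}(1)$ and taking global sections, we get
$$0\to H^0(M_r(1))\to S^r\otimes S^1\to S^{r+1}.$$
The last map is multiplication, since $H^0$ is left exact. Hence $H^0(M_r(1))$ is the kernel of multiplication $S^r\otimes S^1\to S^{r+1}$. The natural candidates are the Koszul-type elements
$$\sigma_{G,i,j}:=X_iG\otimes X_j - X_jG\otimes X_i,\qquad G\in S^{r-1}.$$
These visibly lie in the kernel.

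Next we compute the value of $\sigma_{G,i,j}$ at $x$. Write $\tau=\sum P_k\otimes L_k\in S^r\otimes S^1$ for a section. The map $S^r\otimes\mathcal{O}(1)\to\mathcal{O}(r+1)$ at $x$ sends $\tau$ to $\sum_k P_k\cdot L_k(x)$. So the image of $\tau$ in the fibre $S^r\otimes\mathcal{O}(1)_x$ is $\sum_k L_k(x)P_k$, after trivializing $\mathcal{O}(1)_x$. Choose coordinates with $x=[1:0:\dots:0]$ and take $j=0$, $i\geq 1$. Then $X_0(x)=1$ and $X_i(x)=0$, so
$$\sigma_{G,i,0}(x)=X_iG.$$
Now $S^r_x$ is spanned by the monomials of degree $r$ that are divisible by some $X_i$ with $i\geq1$. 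Each such monomial has the form $X_iG$ with $G$ a monomial of degree $r-1$, so the values $\sigma_{G,i,0}(x)$ span $S^r_x$. This gives fibrewise surjectivity.

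An equivalent, coordinate-free way to say this is that $S^r_x=S^1_x\cdot S^{r-1}$, and $S^1_x$ is spanned by the forms $X_i$ with $i\geq 1$.

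There is no genuine obstacle here. The only care needed is the bookkeeping that identifies the image of a global section in the fibre $S^r\otimes\mathcal{O}(1)_x$ with the combination $\sum_k L_k(x)P_k$. This bookkeeping also has to account for the fact that the fibre of the kernel bundle is the kernel of the fibre maps, because the evaluation map $S^r\otimes\mathcal{O}\to\mathcal{O}(r)$ is surjective, so its kernel $M_r$ is a vector bundle.
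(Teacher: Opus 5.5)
Your proof is correct and complete. The three steps are all sound: $H^0(M_r(1))=\ker(S^r\otimes S^1\to S^{r+1})$, the evaluation $\sigma_{G,i,0}(x)=X_iG$ at $x=[1:0:\dots:0]$, and the spanning of $S^r_x$ by the monomials other than $X_0^r$. The case $r=1$ is included.

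The paper gives no proof of its own and only refers to \cite[Proposition 2.2]{P03}. The standard argument behind such references runs as follows:
\begin{itemize}
\item the Euler sequence identifies $M_1\cong\Omega_{\mathbb{P}^n}(1)$;
\item $\Omega_{\mathbb{P}^n}(2)$ is a quotient of $\wedge^2 S^1\otimes\mathcal{O}_{\mathbb{P}^n}$, hence globally generated;
\item multiplication $S^{r-1}\otimes M_1\to M_r$ is surjective, because $S^r_x=S^1_x\cdot S^{r-1}$.
\end{itemize}
Your sections $X_iG\otimes X_j-X_jG\otimes X_i$ are exactly the images of $G\otimes(X_i\wedge X_j)$ under this chain of maps. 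So your argument is the same proof written out in coordinates, and it has the advantage of being fully self-contained.
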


\begin{remark}\label{subs:nonfunziona}
    As explained in details in \cite[Section 1]{V1} (see also \cite[Section 2.2]{P03}) Ein's result can be deduced from Lemma \ref{lem:gg} using the $\GL(n+1)$-invariance of $\mathcal Y\subset \mathcal X\subset \mathbb P^n\times S^{\bf d}$ and adjunction since, using the same notation as above, we have a morphism
\begin{equation*}
    H^0(\wedge^{n-c-k}M_{{\bf d}|X_{\bf F}}\otimes \omega_{X_{\bf F}})\subset H^0(\wedge^{n-c-k}T_{{\mathcal X}_|X_{\bf F}}\otimes \omega_{X_{\bf F}})\cong H^0({\Omega^{N+k}_{\mathcal X}}_{|X_{\bf F}})\to H^0({\Omega^{N+k}_{\tilde {\mathcal Y}}}_{|\tilde Y_{\bf F}}) \cong H^0(\omega_{\tilde Y_{\bf F}}). 
\end{equation*}
Hence as soon as the vector bundle 
\begin{equation}
    \wedge^{n-c-k}M_{{\bf d}|X_{\bf F}}\otimes \omega_{X_{\bf F}}=\wedge^{n-c-k}M_{{\bf d}|X_{\bf F}}\otimes \mathcal O_{X_{\bf F}}(d-n-1)
\end{equation}
is generated by its global sections, which happens for any  $d\geq 2n+1-c-k$ by Lemma \ref{lem:gg}, we deduce from the adjunction map 
\begin{equation}\label{eq:adj}
    H^0\left(\wedge^{n-c-k}M_{{\bf d}|X_{\bf F}}\otimes \omega_{X_{\bf F}}\right)\longrightarrow H^0(\omega_{\tilde Y_{\bf F}})
\end{equation}
that $h^0(\omega_{\tilde Y_{\bf F}})>0$.
To improve Ein's bound by one in the hypersurface case, Voisin studies the base locus of $H^0(\wedge^2 M_d \otimes \mathcal O(1))$ viewed as a space of
sections of a line bundle on the Grassmannian of codimension two subspaces of a fiber of $(T^\textrm{vert}_{\mathcal X})_{|X_F}$. To do so, she exhibited in \cite[Lemma 2.3]{V1} explicit elements in the image of the evaluation map $H^0(\wedge^2 M_d \otimes \mathcal O(1))\to \wedge^2 M_d \otimes \mathcal O(1)_{|x}$, at any point $x\in X_F$, and used them to prove that if the adjunction map (\ref{eq:adj}) fails to provide non-zero  global sections of $\omega_{\tilde Y_{\bf F}}$ then at any (smooth) point the tangent space $T_{\mathcal{Y}, (x,F)}$ must contain the ideal of a line passing through $x$. To illustrate the difficulties that we encounter if we try to do the same for $c\geq 2$ notice that in this case
$$H^0\left(\wedge^2 M_{\bf{d}}(1)\right)\cong \bigoplus_{i=1}^c H^0\left(\wedge^2 M_{d_i}(1)\right) \oplus \bigoplus_{i<j}H^0\left(M_{d_i}\otimes M_{d_j}(1)\right)
$$
and one sees that the wedge products of the sections produced by Voisin are not enough to get a similar conclusion. To make this approach work one has then to further study $H^0\left(M_{d_i}\otimes M_{d_j}(1)\right)$.  In order to deal with a wider range of degrees we prefer to proceed more generally as in  Section \ref{sec:CVmaps}.
\end{remark}

\section{(Bi)contact loci}\label{s:contact}

Let $\d = (d_1,\dots , d_c)$ be a $c$-tuple of integers $2 \leq d_1 \leq \dots \leq d_c$, and let us fix an integer $1\leq r\leq d_c$ and a multi-polynomial $\F=(F_1,\dots,F_c)\in S^{\bf d}$. 
In the proof of our results, we will naturally encounter the following subloci of the complete intersection $X_{\F}\subset \bP^n$, the \textit{contact locus}
$$
\Delta_{r,\bf F}:=\left\{x\in X_{\bf F}\left| \exists \textrm{ a line }\ell\ni x \textrm{ such that } \ell\cdot V(F_i)\geq rx \text{ if }d_i=r \text{ and } \ell\subset V(F_i)\text{ otherwise}\right.\right\},
$$
and the \textit{bicontact locus}
$$
\Lambda_{r,\F}:=\left\{x\in X_\F\left|\begin{array}{l}
     \exists \textrm{ a line }\ell \ni x \textrm{ such that }\ell \subset V(F_i)\textrm{ if } d_i<r, \textrm{ and}\\ \ell \cdot V(F_i) \geq r x +(d_i-r) y\textrm{ for some }y\in \ell\textrm{ if } d_i \geq r    \end{array} \right.\right\}.
$$

In this section, we study some varieties dominating these loci. In particular, since they are described  as zero loci of  generic sections of globally generated vector bundles, these varieties have the advantage of being smooth and their canonical divisors are easy to compute. 

\subsection{Contact loci}\label{sub:contact}

Let $\mathbb G:= \textrm{Gr}(1,n)$ the Grassmannian of projective lines in $\mathbb P^n$.
Let  
$$
\mathcal P\coloneqq\{(x,[\ell])|x\in \ell\}\subset \mathbb P^n \times \mathbb G
$$
be the incidence variety, endowed with the projections $p\colon \mathcal P\longrightarrow \bP^n$ and $q\colon \mathcal P\longrightarrow \mathbb G$.
For any integer $m>0$, let ${\mathcal E}_m$ be the $m^{th}$-symmetric power of the dual of the 
tautological subbundle on $\mathbb G$, 
and recall that, by definition, its fibre at a point $[\ell]$ 
is given by $H^0 ({\ell},{\mathcal O}_{\ell} (m))$,
and its first Chern class is
\begin{equation}\label{eq:c1E-contact}
  c_1({\mathcal E}_m)=\frac{m(m+1)}{2} L.  
\end{equation}

Let ${\mathcal L}_m \coloneqq mL-mH$ be the rank $1$ subbundle of $q^* {\mathcal E}_m$, where 
$$
H\coloneqq p^* \mathcal O_{\mathbb P^n}(1) \quad \textrm{ and } \quad L\coloneqq q^* \mathcal O_{\mathbb G}(1)
. $$ 
We note that its fibre ${\mathcal L}_{m,(x,[\ell])}$ 
is equal to the space of degree $m$ 
homogeneous polynomials on $\ell$ vanishing to the order $m$ at $x$. 
Finally, let ${\mathcal F}_m$ be the quotient 
\begin{eqnarray}\label{L,E,F}
 0\longrightarrow {\mathcal L}_m\longrightarrow q^* {\mathcal E}_m 
 \longrightarrow {\mathcal F}_m\longrightarrow 0.
\end{eqnarray}
We then have that 
\begin{equation}\label{eq:c1F-contact}
    c_1(\mathcal F_m)=\frac{m(m-1)}{2}L+mH.
\end{equation}
It is possible to associate to every homogeneous polynomial $F\in S^m$ a section 
$\sigma_F\in H^0 (G,{\mathcal E}_m)$, whose value at a point 
$[\ell]$ is exactly the polynomial $F_{|\ell}$. 
We will denote by ${ {\tau}}_F$ 
the induced section in $H^0 ({\mathcal P},{\mathcal F}_m)$.

Now let ${\bf d}=(d_1,\ldots,d_c)$ be an $c$-tuple of positive integers and, for any $2\leq d_1\leq r\leq d_c$, consider the vector bundle
$$
\mathcal A_{{\bf d}, r}= \Big( \bigoplus_{d_j\not=r} q^*\mathcal E_{d_j} \Big) \oplus \Big( \bigoplus_{d_i=r} \mathcal F_{d_i} \Big).
$$
For every ${\bf F}=(F_1,\ldots,F_c)\in S^{\bf d}$, we can define a global section of $\mathcal A_{{\bf d}, r}$ by considering
$$
s_{{\bf F}, r} :=\Big( \bigoplus_{d_j\not=r} \sigma_{F_j}  \Big) \oplus \Big( \bigoplus_{d_i=r} \sigma_{F_i} \Big)
$$
and we set
$$
\widetilde\Delta_{r,\bf F}\coloneqq V(s_{{\bf F}, r}). 
$$
Notice that the vector bundle $\mathcal A_{{\bf d}, r}$ is generated by its global sections since all its factors are. 
Therefore, for a generic ${\bf F}\in S^{\bf d}$, the zero locus $
\widetilde\Delta_{r,\bf F}=V(s_{{\bf F}, r})$ is smooth, and its dimension is
\begin{equation}\label{eq:dim Delta_r}
\dim V(s_{{\bf F}, r}) = \dim \mathcal P - \rk (\mathcal A_{{\bf d}, r}) = 2n-1 - \sum_{d_j\not=r} (d_j+1) - \sum_{d_i=r} d_i.     
\end{equation}

\begin{lemma}\label{lem:cancontact} 
    The canonical class of $\widetilde\Delta_{r,\bf F}$ is given by 
$$
K_{\widetilde\Delta_{r,\bf F}}= (\alpha r -2)H+\left( \sum_{j=1}^c \frac{d_j(d_j+1)}{2} - \alpha r -n\right) L, 
$$
where $\alpha\coloneqq\#\{i\,|\,d_i=r\}$.
\end{lemma}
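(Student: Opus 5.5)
Since $\mathcal A_{{\bf d},r}$ is globally generated and ${\bf F}$ is generic, $\widetilde\Delta_{r,\bf F}=V(s_{{\bf F},r})$ is smooth of the expected codimension $\rk(\mathcal A_{{\bf d},r})$ in $\mathcal P$. Its normal bundle is therefore ${\mathcal A_{{\bf d},r}}_{|\widetilde\Delta_{r,\bf F}}$, and adjunction gives
$$
K_{\widetilde\Delta_{r,\bf F}}=\left(K_{\mathcal P}+c_1(\mathcal A_{{\bf d},r})\right)_{|\widetilde\Delta_{r,\bf F}}.
$$
So the plan is to compute $K_{\mathcal P}$ and $c_1(\mathcal A_{{\bf d},r})$ in terms of $H$ and $L$, and then add them.

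First, $c_1(\mathcal A_{{\bf d},r})$. By \eqref{eq:c1E-contact}, each summand $q^*\mathcal E_{d_j}$ with $d_j\neq r$ contributes $\frac{d_j(d_j+1)}{2}L$. By \eqref{eq:c1F-contact}, each of the $\alpha$ summands $\mathcal F_{d_i}$ with $d_i=r$ contributes $\frac{r(r-1)}{2}L+rH$. This is the first Chern class of $\mathcal E_r$ minus $c_1(\mathcal L_r)=rL-rH$, i.e. $\frac{r(r+1)}{2}L - rL + rH$. Summing over all indices gives
$$
c_1(\mathcal A_{{\bf d},r})=\sum_{j=1}^c\frac{d_j(d_j+1)}{2}L-\alpha rL+\alpha rH .
$$

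Second, $K_{\mathcal P}$. The incidence variety is $\mathcal P=\mathbb P(S)$, where $S$ is the rank $2$ tautological subbundle on $\mathbb G$, and $p^*\mathcal O_{\mathbb P^n}(1)=H$ plays the role of $\mathcal O_{\mathbb P(S)}(1)$. The relative Euler sequence gives $K_{\mathcal P/\mathbb G}=-2H+q^*c_1(S^\vee)=-2H+L$. Since $K_{\mathbb G}=-(n+1)L$, we get
$$
K_{\mathcal P}=-2H+L-(n+1)L=-2H-nL.
$$
As a cross-check, one can view $\mathcal P$ as the projectivised tangent bundle of $\mathbb P^n$ via $p$, i.e. a $\mathbb P^{n-1}$-bundle, and compute $K_{\mathcal P}$ that way; it should also give $-2H-nL$.

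Adding the two contributions yields
$$
K_{\widetilde\Delta_{r,\bf F}}=(\alpha r-2)H+\left(\sum_{j=1}^c\frac{d_j(d_j+1)}{2}-\alpha r-n\right)L,
$$
which is the claimed formula.

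The only delicate point is sign conventions: identifying $H$ with the tautological $\mathcal O(1)$ on $\mathbb P(S)$, and $L$ with $c_1(S^\vee)$, consistently with \eqref{eq:c1F-contact}. I would settle this by restricting to a fibre of $q$ (a line $\ell\cong\mathbb P^1$), where $K_{\mathcal P}$ must restrict to $\mathcal O_{\mathbb P^1}(-2)$. This is consistent with $H_{|\ell}=\mathcal O(1)$ and $L_{|\ell}$ trivial.
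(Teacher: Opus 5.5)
Your proposal is correct and is essentially the paper's proof: adjunction for the zero locus of a regular section of $\mathcal A_{{\bf d},r}$, with $c_1(\mathcal A_{{\bf d},r})$ read off from \eqref{eq:c1E-contact} and \eqref{eq:c1F-contact}, combined with $K_{\mathcal P}=-2H-nL$. The only difference is that you derive $K_{\mathcal P}$ from the relative Euler sequence of $\mathcal P=\mathbb P(S)\to\mathbb G$ together with $K_{\mathbb G}=-(n+1)L$, whereas the paper simply cites Voisin for it. Note also that your fibre-of-$q$ check only confirms the $H$-coefficient; restricting to a fibre of $p$, which is a linear $\mathbb P^{n-1}$ on which $L$ has degree one, confirms the coefficient $-n$.
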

\begin{proof} 
   The formula follows from the adjunction formula using (\ref{eq:c1E-contact}), (\ref{eq:c1F-contact}), and the fact that 
    $K_{\mathcal P}=-2H-nL$ (see e.g. \cite[proof of Proposition 1]{V2}). 
    \end{proof}

\subsection{Bicontact loci}

Given $\d = (d_1,\dots , d_c)$ be a $c$-tuple of integers $2 \leq d_1 \leq \dots \leq d_c$, and an integer $1\leq r\leq d_c$, we set
$$
\beta:=\#\{i\,| \, d_i\geq r\}.
$$
As above, we consider the Grassmannian $\mathbb G:= \textrm{Gr}(1,n)$ of projective lines in $\mathbb P^n$ and the universal line $\mathcal P\subset \mathbb P^n \times \mathbb G$, endowed with the projections $p\colon \mathcal P\longrightarrow \bP^n$ and $q\colon \mathcal P\longrightarrow \mathbb G$.

We distinguish three cases, depending on $r$ and $\d$.

\medskip
{\bf Case 1: $\beta r \geq 3$ and $\sum_{d_i\geq r}(d_i-r)\geq 2$.}

\noindent 
Let ${\mathcal O}_{\mathbb G} (1)$ be the line bundle on $\mathbb G$ which gives the Pl\"ucker embedding.
Let $Z$ be the blow-up along the diagonal $\textrm{Diag}\subset{\mathbb P}^n\times {\mathbb P}^n$, with projections
\begin{equation}
\xymatrix{
{Z:= \textrm{Bl}_{\textrm{Diag}}{\mathbb P}^n\times {\mathbb P}^n}\ar[r]^{\ \ \ \ \ b}
& {\mathbb P}^n \times{\mathbb P}^n\ar[r]^{p_2}\ar[d]^{p_1}
& \mathbb P^n.\\
&\mathbb P^n
& 
& \\
}\end{equation}
Then we have a natural map 
\begin{equation}
f\colon Z \longrightarrow \mathbb G\qquad  z\longmapsto \ell_{z},
\end{equation}
where for general $z=b^{-1}(x,y)\in Z$, $\ell_z$ is the line passing through $x$ and $y$.
For $i=1,2$, let $\widetilde p_i\coloneqq p_i\circ b$ and consider the line bundles on $Z$ defined as
$$H_1\coloneqq\widetilde p_1^* {\mathcal O}_{\mathbb P^n}(1), \quad H_2\coloneqq\widetilde p_2^* {\mathcal O}_{\mathbb P^n}(1), \quad\text{and}\quad L\coloneqq f^* {\mathcal O}_{\mathbb G} (1).$$
The variety $Z$ comes together with a projective bundle
${\mathbb P}{\buildrel \pi \over {\longrightarrow}} Z$, and $\forall m\geq 0$ we define  
${\mathcal E}_m:= \pi_* {\mathcal O}_{\mathbb P} (m)$.
Notice that the fibre 
of ${\mathcal E}_m$ at $z$ is equal to $H^0 ({\ell_z},{\mathcal O}_{\ell_z} (m))$ and that we have  
\begin{equation}\label{eq:c1-bicontact}
      c_1({\mathcal E}_m)=\frac{m(m+1)}{2} L.  
\end{equation}

If $0\leq r\leq m$, we may consider the line bundle ${\mathcal L}_{r,m-r}\subset {\mathcal E}_m$,
whose fibre at $z\in Z$ is given by the one dimensional space of
polynomials $P\in H^0 ({\ell_z},{\mathcal O}_{\ell_z} (m))$ vanishing at $x$ to 
the order $r$ and at $y$ to the order $m-r$, where 
$(x,y)=b(z)\in {\mathbb P}^n \times{\mathbb P}^n$. 
Moreover, we define ${\mathcal G}_{r,m-r}:={\mathcal E}_m/{\mathcal L}_{r,m-r}$. 
To any polynomial 
$F\in S^m$ we can associate a section $\sigma_F\in H^0 (Z,{\mathcal E}_m)$, 
whose value at a point 
$z$ is exactly the polynomial $F_{|\ell_z}\in {{\mathcal E}_m}_{|z}$, 
and we will denote 
by ${{\tau}_F}$ its image 
in $H^0 (Z,{\mathcal G}_{r,m-r})$. 

We now
consider the vector bundle
$$
\mathcal B_{{\bf d}, r}\coloneqq \Big( \bigoplus_{d_i<r} q^*\mathcal E_{d_i}\Big) \oplus\ \Big(\bigoplus_{d_i\geq r}q^*\mathcal G_{r,d_{i}}\Big).
$$
For every ${\bf F}=(F_1,\ldots,F_c)\in S^{\bf d}$, we can define a global section of $\mathcal B_{{\bf d}, r}$ by considering:
$$
s_{{\bf F}, r} :=\Big( \bigoplus_{d_i<r} \sigma_{F_i}\Big)\oplus \Big( \bigoplus_{d_i\geq r} \tau_{F_{i}}\Big)
$$
Then we define 
\begin{equation}\label{eq:Lambda1}
    {\widetilde\Lambda}_{r,\F}:= V({{s}_{{\bf F}, r}}),
\end{equation}
that is 
\begin{equation}\label{eq:Lambda1bis}
{\widetilde\Lambda}_{r, \mathbf{F}}=\overline{\left\{(x,x')\in \mathrm{Bl}_{\textrm{Diag}}(\mathbb{P}^n\times \mathbb{P}^n)\left|\begin{array}{c}\forall i=1,\dots,c \text{ the line }\ell\coloneqq \langle x,x'\rangle \text{ satisfies }\\ \ell\cdot V(F_i)\geq rx+(d_i-r)x' \end{array} \right.\right\}}.
\end{equation}
By construction, we have 
$\widetilde{p_1}({\widetilde\Lambda}_{r,\F})={\Lambda}_{r,\F}$.
Since ${\mathcal B}_{\d,r}$ is generated by its global sections, the variety
${\tilde\Lambda}_{r,\F}$ is smooth of dimension 
\begin{equation}\label{eq:dim-lambda}
    \dim {\tilde\Lambda}_{r,\F} = \dim(Z)-\rk \mathcal B_{{\bf d}, r}= 2n - \sum_{d_i<r} (d_i+1) - \sum_{d_i\geq r} d_i\ .
\end{equation}
Moreover, we compute the canonical bundle of 
${\tilde\Lambda}_{r,F}$.

\begin{lemma}\label{lem:can-bicontact}
   The canonical class of $\widetilde\Lambda_{r,\bf F}$
is given by 
$$
K_{{\widetilde\Lambda}_{r,\F}}= 
 (\beta r-2)H_1 + \left[ -2+ \sum_{d_i\geq r}(d_i-r)\right]H_2 + \left[- n+1+\sum_{d_i<r}\binom{d_i+1}{2} + \sum_{d_i\geq r}\binom{d_i}{2}  \right]L.
$$
\end{lemma}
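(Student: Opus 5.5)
The plan is to apply adjunction to $\widetilde\Lambda_{r,\F}=V(s_{\F,r})$, which is the zero locus of a section of the globally generated bundle $\mathcal B_{\d,r}$ on $Z$. For general $\F$ this zero locus is smooth of the expected codimension $\rk\mathcal B_{\d,r}$, so its normal bundle is $\mathcal B_{\d,r|\widetilde\Lambda_{r,\F}}$ and
$$K_{\widetilde\Lambda_{r,\F}}=\big(K_Z+c_1(\mathcal B_{\d,r})\big)_{|\widetilde\Lambda_{r,\F}}.$$
It therefore suffices to compute $K_Z$ and $c_1(\mathcal B_{\d,r})$ in terms of $H_1,H_2,L$. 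Throughout, the bundles $\mathcal E_m$ and $\mathcal G_{r,m-r}$ are regarded on $Z$; the $q^*$ in the definition of $\mathcal B_{\d,r}$ is only notational.

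First, $K_Z$. For the blow-up of $\mathbb P^n\times\mathbb P^n$ along the diagonal, which has codimension $n$, we have
$$K_Z=b^*K_{\mathbb P^n\times\mathbb P^n}+(n-1)E=-(n+1)H_1-(n+1)H_2+(n-1)E,$$
where $E$ is the exceptional divisor. To eliminate $E$, I would check that $L=H_1+H_2-E$. Indeed, the Plücker coordinates of $\ell_z$ are the $2\times2$ minors $x_iy_j-x_jy_i$. These are sections of $H_1+H_2$ that vanish exactly to first order along the diagonal, so after blowing up they give a base-point-free linear system $|H_1+H_2-E|$ which induces $f$. Substituting gives
$$K_Z=-2H_1-2H_2-(n-1)L.$$
This identification is the only geometric input; the rest is bookkeeping.

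Second, $c_1(\mathcal G_{r,m-r})$. The bundle $\mathcal E_1$ has rank $2$ and $c_1(\mathcal E_1)=L$. Evaluation at the point $x$ gives a surjection $\mathcal E_1\to H_1$, so the subbundle of linear forms on $\ell_z$ vanishing at $x$ is the line bundle $L-H_1$; similarly, the linear forms vanishing at $y$ form $L-H_2$. The fibre of $\mathcal L_{r,m-r}$ is spanned by the product of the $r$-th power of the first form and the $(m-r)$-th power of the second. Hence
$$\mathcal L_{r,m-r}\cong r(L-H_1)+(m-r)(L-H_2)=mL-rH_1-(m-r)H_2.$$
Using \eqref{eq:c1-bicontact}, this gives
$$c_1(\mathcal G_{r,m-r})=\tfrac{m(m+1)}2L-c_1(\mathcal L_{r,m-r})=\binom m2L+rH_1+(m-r)H_2.$$

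Finally, summing over the factors of $\mathcal B_{\d,r}$ yields
$$c_1(\mathcal B_{\d,r})=\sum_{d_i<r}\binom{d_i+1}2L+\sum_{d_i\ge r}\Big[\binom{d_i}2L+rH_1+(d_i-r)H_2\Big].$$
Adding $K_Z$, the $H_1$-coefficient is $\beta r-2$, the $H_2$-coefficient is $-2+\sum_{d_i\ge r}(d_i-r)$, and the $L$-coefficient is $-n+1+\sum_{d_i<r}\binom{d_i+1}2+\sum_{d_i\ge r}\binom{d_i}2$, as claimed.

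I expect the main step needing care to be the identification of $\mathcal L_{r,m-r}$ as a line bundle on all of $Z$, including over the exceptional divisor. There $x=y$, and the point of $E$ records the direction of $\ell_z$. The product description still makes sense, since the two linear-form subbundles $L-H_1$ and $L-H_2$ remain well defined, but over $E$ they coincide. Hence $\mathcal L_{r,m-r}$ is the $m$-th power of that common subbundle, and it is still a saturated rank-one subbundle of $\mathcal E_m$. This is what makes $\mathcal G_{r,m-r}$ a vector bundle and the Chern class computation valid.
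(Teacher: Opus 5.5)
Your argument is correct and is essentially the paper's proof: adjunction on $Z$ with $K_Z=-2H_1-2H_2-(n-1)L$ and $c_1(\mathcal G_{r,m-r})=\frac{m(m+1)}{2}L-c_1(\mathcal L_{r,m-r})$, except that you derive $K_Z$ (via $L=H_1+H_2-E$) and the class of $\mathcal L_{r,m-r}$ yourself, where the paper cites Voisin. Your value $c_1(\mathcal L_{r,m-r})=mL-rH_1-(m-r)H_2$ is the one that actually yields the stated formula, so the paper's remark that this class is $rH_1+(m-r)H_2$ should be read as your expression.
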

\begin{proof} 
   By adjunction, 
we have that $ K_{{\widetilde\Lambda}_{r,\F}}= (K_Z)_{|{\widetilde\Lambda}_{r,\F}} + c_1 ({\mathcal B}_{\d,r})$. As remarked in \cite[Proof of Proposition 1]{V2}, the Picard group of $Z$ is generated by $H_1$, $H_2$ and $L$, the canonical class of $Z$ is 
$K_Z= -2H_1 - 2H_2 +(-n+1)L$, and the class of ${\mathcal L}_{(r,m-r),F}$ is given by $rH_1 + (m-r)H_2$.
The formula then follows  using (\ref{eq:c1-bicontact}) and the above.
\end{proof}

\medskip
{\bf Case 2: $d_1,\ldots,d_{c-1}\leq r$ and $d_c=r+1$.} \\
Consider the condition $\sum_{d_i\geq r}(d_i-r)\geq 2$ appearing in Case 1.
We point out that it fails if and only if $d_1,\ldots,d_{c-1}\leq r$ and $d_c\in \{r,r+1\}$.
When $d_c=r$, the locus ${\Lambda}_{r,\F}$ coincide with the contact locus ${\Delta}_{r,\F}$, which has been considered in \S \ref{sub:contact}.
Thus we assume that $d_c=r+1$.

Consider the universal line $\mathcal{P}\subset \mathbb P^n\times  \mathbb G$. 
Let ${\mathbb P}{\buildrel \pi \over {\longrightarrow}} \mathcal{P}$ be the pull-back of the universal $\mathbb P^1$-bundle over $\mathbb G$, and $\tau$ the natural section of $\pi$.
As in \cite[Proof of Proposition 1]{V2}
\begin{equation}\label{eq:cangamma}
    K_\mathcal{P}=-2H-nL,
\end{equation}
where $L$ is the pull-back of $\mathcal O_{\mathbb G}(1)$ and $H:=\tau^* {\mathcal O}_{\mathbb P} (1)$.

For an integer $m\geq 1$ we consider the bundle ${\mathcal E}_m:= \pi_* {\mathcal O}_{\mathbb P} (m)$  
over $\mathcal{P}$. Its first Chern class is given by 
\begin{equation}\label{eq:c1E}
c_1({\mathcal E}_m)=\frac{m(m+1)}{2} L.
\end{equation}
We will consider two subbundles of ${\mathcal E}_m$. 
First consider the line bundle ${\mathcal L}_1$ defined by 
$$
 0\longrightarrow {\mathcal L}_1\longrightarrow {\mathcal E}_1\longrightarrow 
 H\longrightarrow 0.
$$
Its $m$-th tensor power ${\mathcal L}_1^{\otimes m}$ is the line bundle whose fiber at a point $(x,[\ell])$ is the vector space $H^0(\ell, \mathcal O_{\ell}(-mx))$ of degree $m$ homogeneous polynomials on $\ell$ vanishing with maximal order at $x$. Define 
${\mathcal F}_m:= {\mathcal E}_m/{\mathcal L}_m$. 
Hence
\begin{equation}\label{eq:c1F}
c_1({\mathcal F}_m)=c_1({\mathcal E}_m) - c_1({\mathcal L}_m)= \frac{m(m+1)}{2} L - m(c_1({\mathcal E}_1)- H)= \Big( \frac{m(m-1)}{2}\Big)L +mH. 
\end{equation}

We then consider the rank $2$ subbbundle 
${\mathcal K_{m-1}}\subset {\mathcal E}_m$
whose fiber at a point $(x,\ell)$ is given by the
polynomials $P\in H^0 ({\mathcal O}_{\ell}(m))$ vanishing to the order 
at least $(m-1)$ at $x$. 
Then ${\mathcal K_{m-1}}\cong {\mathcal L}_1^{m-1}\otimes {\mathcal E}_1$. 
 By its definition we get
$$
c_1(\mathcal K_{m-1})=c_1(\mathcal E_{1})+2(m-1)c_1(\mathcal L_{1})=c_1(\mathcal E_{1})+2(m-1)\Big[ c_1(\mathcal E_{1}) -H\Big]= (2m-1)L-2(m-1)H.
$$
Let ${\mathcal G}_m$ be the quotient ${\mathcal E}_m/{\mathcal K_{m-1}}$.
We then have 
\begin{equation}\label{eq:c1G}
c_1({\mathcal G}_m) = \Big[ \frac{m(m+1)}{2}-(2m-1)\Big] L + 2(m-1)H= \Big[ \frac{(m-2)(m-1)}{2}\Big] L + 2(m-1)H.
\end{equation}

As before, to any $F\in S^m$ we can associate a global 
section $\sigma_F$ of ${\mathcal E}_m$ and, by restricting it to the quotients, a global section 
$\tau_F$ of ${\mathcal F}_m$ and a global section 
$\rho_F$ of ${\mathcal G}_m$. 
We now consider the globally generated vector bundle 
$$
\mathcal C_{\d,r}=\Big( \bigoplus_{d_i< r} \mathcal E_{d_i}\Big) \oplus \Big( \bigoplus_{d_i= r} \mathcal F_{r}\Big) \oplus  \mathcal G_{r+1}
$$
and, for $\F=(F_1,\ldots,F_c)\in S^\d$, its global section 
$$
s_{{\bf F}, r} :=\Big( \bigoplus_{d_i<r} \sigma_{F_i}\Big)\oplus \Big( \bigoplus_{d_i= r} \tau_{F_{i}}\Big)\oplus  \rho_{F_{c}}.
$$
Then we define 
\begin{equation}\label{eq:Lambda2}
    {\widetilde\Lambda}_{r,\F}:= V({{s}_{{\bf F}, r}}). 
\end{equation}
By construction, we have that 
${p}({\widetilde\Lambda}_{r,\F})={\Lambda}_{r,\F}$.
Since ${\mathcal C}_{\d,r}$ is generated by its global sections, the variety ${\tilde\Lambda}_{r,\F}$ is smooth of dimension $\dim {\tilde\Lambda}_{r,\F} = \dim(\Gamma)-\rk \mathcal C_{{\bf d}, r}$, which yields
\begin{equation}\label{eq:dim-lambda-bis}
    \dim {\tilde\Lambda}_{r,\F} = 2n-1 - \sum_{d_i<r} (d_i+1) - \sum_{d_i= r} r -  r= 2n-1 - \sum_{d_i<r} (d_i+1) - \sum_{d_i\geq  r} r\ .
\end{equation}

\begin{lemma}\label{lem:can-bicontact-bis}
  The canonical class of $\tilde\Lambda_{r,\bf F}$
is given by 
$$
K_{{\tilde\Lambda}_{r,\F}}=
 (-2+\gamma r+2 r)H +  \left[- n+\sum_{d_i<r}\binom{d_i+1}{2} +  \sum_{d_i\geq r}\binom{r}{2}  \right]L,
$$
where $\gamma:=\#\{i:d_i=r\}$. 

\end{lemma}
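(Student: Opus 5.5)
The plan is to argue exactly as in Lemmas \ref{lem:cancontact} and \ref{lem:can-bicontact}, by adjunction. Since $\mathcal C_{\d,r}$ is globally generated and $\F$ is generic, $\widetilde\Lambda_{r,\F}=V(s_{\F,r})$ is a smooth zero locus of the expected codimension $\rk\mathcal C_{\d,r}$ in $\mathcal P$. Its normal bundle is therefore the restriction of $\mathcal C_{\d,r}$, which gives
$$K_{\widetilde\Lambda_{r,\F}}=\big(K_{\mathcal P}+c_1(\mathcal C_{\d,r})\big)_{|\widetilde\Lambda_{r,\F}}.$$
For $K_{\mathcal P}$ I will use \eqref{eq:cangamma}, namely $K_{\mathcal P}=-2H-nL$.

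Next I compute $c_1(\mathcal C_{\d,r})$ summand by summand, using the Chern class formulas already established.
\begin{itemize}
\item Each $\mathcal E_{d_i}$ with $d_i<r$ contributes $\binom{d_i+1}{2}L$, by \eqref{eq:c1E}.
\item Each of the $\gamma$ summands $\mathcal F_r$ contributes $\binom{r}{2}L+rH$, by \eqref{eq:c1F} with $m=r$.
\item The single summand $\mathcal G_{r+1}$ contributes $\binom{r}{2}L+2rH$, by \eqref{eq:c1G} with $m=r+1$, since $\frac{(m-2)(m-1)}{2}=\binom{r}{2}$ and $2(m-1)=2r$.
\end{itemize}
Adding these to $K_{\mathcal P}$ gives:
\begin{itemize}
\item $H$-coefficient: $-2+\gamma r+2r$;
\item $L$-coefficient: $-n+\sum_{d_i<r}\binom{d_i+1}{2}+(\gamma+1)\binom r2$.
\end{itemize}

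It remains to match the $L$-coefficient with the stated formula. In Case 2 we have $d_1,\dots,d_{c-1}\le r$ and $d_c=r+1$. Hence the indices with $d_i\ge r$ are exactly the $\gamma$ indices with $d_i=r$ together with $i=c$. This gives $(\gamma+1)\binom r2=\sum_{d_i\ge r}\binom r2$, which is the claimed expression.

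The only point needing care is the derivation of \eqref{eq:c1G}, that is, $c_1(\mathcal K_{m-1})=(2m-1)L-2(m-1)H$. It follows from $\mathcal K_{m-1}\cong\mathcal L_1^{m-1}\otimes\mathcal E_1$ and $c_1(\mathcal L_1)=c_1(\mathcal E_1)-H=L-H$. As a rank-$2$ bundle twisted by a line bundle, it gives
$$c_1(\mathcal K_{m-1})=L+2(m-1)(L-H).$$
I will recheck this step. Everything else is bookkeeping.
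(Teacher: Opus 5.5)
Your proposal is correct and follows the paper's proof: adjunction on $\mathcal P$ with $K_{\mathcal P}=-2H-nL$ from \eqref{eq:cangamma}, plus the first Chern classes \eqref{eq:c1E}, \eqref{eq:c1F} and \eqref{eq:c1G} of the summands of $\mathcal C_{\d,r}$, and your recheck $c_1(\mathcal K_{m-1})=L+2(m-1)(L-H)=(2m-1)L-2(m-1)H$ is right. Your identification $(\gamma+1)\binom{r}{2}=\sum_{d_i\geq r}\binom{r}{2}$, which uses that $d_c=r+1$ and $d_i\leq r$ otherwise, spells out the bookkeeping the paper leaves implicit.
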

\begin{proof} 
   By adjunction, 
we have $
 K_{{\tilde\Lambda}_{r,\F}}= (K_\Gamma)_{|{\tilde\Lambda}_{r,\F}} + c_1 ({\mathcal C}_{\d,r})$. Then, from (\ref{eq:cangamma}), (\ref{eq:c1E}), (\ref{eq:c1F}) and (\ref{eq:c1G}) we get
$$
K_{{\tilde\Lambda}_{r,\F}}=
\Big[-2H-nL\Big]+ \Big[ \sum_{d_i<r} \frac{d_i(d_i+1)}{2} L\Big]+ \Big[ \sum_{d_i=r}\frac{r(r-1)}{2}L +rH \Big]+ \Big[\frac{r(r-1)}{2} L + 2rH\Big]
$$ 
and the formula now follows.   
\end{proof}

{\bf Case 3: $\beta r \leq 2$.}\\
We describe here smooth models of bicontact loci generalizing to complete intersections of arbitrary codimension $c$. 
The case  $c=1$ is treated in \cite[Proof of Theorem 5.2]{CoR22} (which in turn was based on \cite{CR04}). 
To simplify the notation we set $S:=S^\d$.
First, we define the incidence locus $\Delta_1\subset \mathbb P^n\times \mathbb G \times S$ as follows. 
The projections onto the three factors of $\mathbb P^n\times \mathbb G \times S$ will be denoted respectively by $p$, $\pi_\bG$ and $\pi_S$ and the  pull-back of the positive generator of $\textrm{Pic}(\bP^n)$ (respectively of $\textrm{Pic}(\bG)$), are denoted by $H$ (resp. $L$). The incidence locus $\Delta_1$ the common zero locus of the two maps
$$
p^*\mathcal O_{\mathbb P^n}(-1)\longrightarrow \pi^*_G \mathcal{Q}
$$
and 
$$
\pi_S^*\mathcal O_S(-1)\longrightarrow
p^*\left( \oplus_{i=1}^c \mathcal O_{\mathbb P^n}(d_i)\right).
$$
Therefore, by adjunction,
\begin{equation}\label{eq:canDelta1}
\begin{aligned}
    \omega_{\Delta_1} & = (\omega_{\mathbb P^n\times \mathbb G \times S})_{|\Delta_1}\otimes c_1 \left[(p^*\mathcal O_{\mathbb P^n}(1)\otimes \pi_G^*\mathcal Q)\oplus \left( \pi_S^*\mathcal O_S(1)\otimes p^*( \oplus_{i=1}^c \mathcal O_{\mathbb P^n}(d_i))\right)\right]\\
    & = -nL+(d-2)H.
\end{aligned}
\end{equation}
Then, for any integer $r\geq 1$, we consider
$$
\Delta_{r} :=\{(x,\ell,\F)| \ell\cdot V(F_i)\geq rx\}
\subset \mathbb P^n\times \mathbb G \times S$$
inside $\Delta_1$. Notice that if $r>d_i$ the condition $\ell\cdot V(F_i)\geq rx$ implies that the line $\ell$ is contained in the hypersurface $V(F_i)$.
Similarly, if $\s=(s_1,\ldots, s_c)$ is a $c$-tuple of integers, we define  
\begin{equation}\label{eq:Delta_r,s}
\Delta_{r,\s} :=\{(x,y,\ell,\F)| \ell\cdot V(F_i)\geq rx+s_i y\}
\subset \mathbb P^n\times \mathbb P^n\times \mathbb G \times S.
\end{equation}
We denote by   $\Delta_{r,\s, \F}$ the fiber of $\Delta_{r,\s}$ over $\F\in S^\d$. 
The projections onto the four factors of $\mathbb P^n\times \mathbb P^n\times \mathbb G \times S$ will be denoted respectively by $p_1$, $p_2,$ $\pi_\bG$ and $\pi_S$ and the two pull-backs of the positive generator of $\textrm{Pic}(\bP^n)$, respectively of $\textrm{Pic}(\bG)$, are denoted by $H_1$ and $H_2$ (resp. $L$).
  If $\mathcal S$ is the tautological subbundle on $\bG$, the natural injection 
$$
0\longrightarrow p^*\mathcal O_{\bP^n}(-1) \longrightarrow \pi^*_\bG\mathcal S
$$
yields a rank one quotient $R^\vee$, which by definition satisfies
$$
R=\pi^*_\bG \mathcal O_{\bG}(1)\otimes p^*\mathcal O_{\bP^n}(-1).
$$
In other words, the fiber of $R$ at a point $(x,\ell, F)$ is $H^0(\ell, \mathcal O_\ell(1)(-x))$.
For any integer $m\geq 1$ we can define a natural filtration $\mathcal F^\bullet_m$ on $\pi_\bG^* Sym^m \mathcal S^\vee$ by homogeneous degree $m$ polynomials vanishing to the order $j$ at $x$ modulo those vanishing at higher order. More explicitly 
$$
\mathcal F^j_m/\mathcal F_m^{j+1}=p^*\mathcal O_{\bP^n}(m-j)\otimes R^j= p^*\mathcal O_{\bP^n}(m-2j)\otimes \pi^*_\bG\mathcal O_{\bG}(j).
$$
Notice that from this formula we deduce that if $m\geq r$, then
\begin{equation}\label{eq:c_1}
    c_1\left(\mathcal F^1_m/\mathcal F_m^{r}\right)=c_1\left(\bigotimes_{j=1}^{r-1} p^*\mathcal O_{\bP^n}(m-2j)\otimes \pi^*_\bG\mathcal O_{\bG}(j)\right)=(r-1)(m-r) H+\frac{r(r-1)}{2}L.
\end{equation}
If instead $m<r$, then 
\begin{equation}\label{eq:c_1 bis}
c_1\left(\mathcal F^1_m/\mathcal F_m^{r}\right)=c_1\left(\mathcal F^1_m/\mathcal F_m^{m+1}\right)=-m H+\frac{m(m+1)}{2}L.
\end{equation}

On $S$ we consider the tautological line bundle $\mathcal O_S(-1)$ whose fiber at $\F\in S$ is $\mathbb C\cdot \F$.
Now consider, for any $m\geq 1$, the maps
$$
\varphi_m: \pi_S^* \mathcal O_S(-1)\to \mathcal F^1_m/ \mathcal F^r_m
$$
and, for a $c$-tuple of integers $\d=(d_1,\ldots, d_c)$ the map
$$
\varphi_{r,\d}\coloneqq (\varphi_{d_1},\ldots, \varphi_{d_c})\colon \pi_S^* \mathcal O_S(-1)\longrightarrow \oplus_{i=1}^c\mathcal F^1_{d_i} / \mathcal F^r_{d_i}
$$
Then 
$\Delta_r$ is the zero scheme of $\varphi_{r,\d}$ and therefore, by adjunction and using the fact that $\mathcal O_S(-1)$ is trivial, from (\ref{eq:c_1}) and (\ref{eq:c_1 bis}) we deduce that 
\begin{align*}
    \omega_{\Delta_r} & = \omega_{\Delta_1}\otimes c_1(\oplus_{i=1}^c\mathcal F^1_{d_i} / \mathcal F^r_{d_i})\\
    &= \left(-n + \sum_{d_i<r}\frac{d_i(d_i+1)}{2} + \sum_{d_i\geq r}\frac{r(r-1)}{2} \right) L + \left(-2 + \sum_{d_i\geq r} r(d_i-r+1)\right) H.
\end{align*}
We now do similar computations for $\Delta_{r,\s}$. To avoid cumbersome notation we still denote by $R$ its natural pull-back to $\mathbb P^n\times \mathbb P^n\times \mathbb G \times S$.
We first define 
$$
q\colon p_2^*\mathcal O_{\bP^n}(-1)\longrightarrow \pi_\bG^*\mathcal Q
$$
where $\mathcal Q$ is the quotient bundle on the Grassmannian $\bG$. For any $c$-tuple of integers $\d=(d_1,\ldots,d_c)$ we also have a map:
$$
\pi_S^*\mathcal O_S(-1)\longrightarrow \oplus_{i=1}^c\  p_2^* \mathcal O_{\bP^n}(d_i).
$$
The locus $\Delta_{r,\bf 1}$, where ${\bf 1}$ is simply the $c$-tuple $(1,\ldots,1)$, is the common zero scheme of these two maps on the preimage of $\Delta_r$ in 
$\mathbb P^n\times \mathbb P^n\times \mathbb G \times S$ (which we still denote by $\Delta_r$).
Thus
\begin{align*}
\omega_{\Delta_{r,\bf 1}} & = \omega_{\Delta_{r}} \otimes p_2^* \omega_{\bP^n} \otimes c_1\left(p_2^*\mathcal O_{\bP^n}(1)\otimes \pi_\bG^*\mathcal Q\right) \otimes c_1\left(\oplus_{i=1}^c\  p_2^* \mathcal O_{\bP^n}(d_i)\right)\\
& = \Big(-n  + 1+ \sum_{d_i<r}\frac{d_i(d_i+1)}{2} + \sum_{d_i\geq r}\frac{r(r-1)}{2} \Big) L + \Big(-2 + \sum_{d_i\geq r} r(d_i-r+1)\Big) H_1 + (d-2) H_2.    
\end{align*}
We can define a filtration $\mathcal G^\bullet_m$ on $R^r\otimes \pi_\bG^* \textrm{Sym}^{m-r} \mathcal S^\vee$ in a way similar to the definition of $\mathcal F^\bullet_m$ by the order of vanishing at the point $y$, so that 
$$
\mathcal G^j_m/\mathcal G^{j+1}_m = p_2^*\mathcal O_{\bP^n}(m-2j)\otimes \pi^*_\bG\mathcal O_{\bG}(j).
$$
Notice that from this formula we deduce that if $m\geq s$, then
\begin{equation}\label{eq:c_1 ter}
    c_1\left(\mathcal G^1_m/\mathcal G_m^{s}\right)=c_1\left(\bigotimes_{j=1}^{s-1} p_2^*\mathcal O_{\bP^n}(m-2j)\otimes \pi^*_\bG\mathcal O_{\bG}(j)\right)=(s-1)(m-s) H_2+\frac{s(s-1)}{2}L.
\end{equation}
Now we will apply the above for two $c$-tuples of integers $\d=(d_1,\ldots, d_c)$  
and $\s=(s_1,\ldots, s_c)$ with $s_i\coloneqq \max\{0,d_i-r\}$, and 
the map
$$
\psi_{r,\d, \s}\colon 
 \pi_S^* \mathcal O_S(-1)\longrightarrow \oplus_{d_i> r}\mathcal G^1_{d_i} / \mathcal G^{s_i}_{d_i}.
$$
Then 
$\Delta_{r,\s}$ is the zero scheme of $\psi_{r,\d,\s}$ on $\Delta_{r,\bf 1}$.  
We have the following.

\begin{lemma}\label{lem:can-bicontact-ter}
The canonical class of $\Delta_{r,\s,\F}$
is given by 
\begin{align*}
\omega_{\Delta_{r,\s,\F}} 
& = \left(-n + 1+ \sum_{d_i<r}\frac{d_i(d_i+1)}{2} + \sum_{d_i\geq r}\frac{r(r-1)}{2} + \sum_{d_i>r}\frac{s_i(s_i-1)}{2} \right) L +\\
& + \left(-2 + \sum_{d_i\geq r} r(d_i-r+1)\right) H_1 + \left(-2 + \sum_{d_i> r} s_i(d_i-s_i+1)\right) H_2.    
\end{align*}
Moreover, for general $\F\in S^\d$ and $\s=\d-r\coloneqq (d_1-r,\dots,d_c-r)$, 
  $\Delta_{r,\s, \F}$ is birational to the scheme $\widetilde \Lambda_{r,\F}$ defined in \eqref{eq:Lambda1bis}.
\end{lemma}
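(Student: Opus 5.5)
The canonical class formula will come from iterated adjunction, continuing the computations already made for $\Delta_1$, $\Delta_r$ and $\Delta_{r,\bf 1}$. Since $\Delta_{r,\s}$ is cut out inside $\Delta_{r,\bf 1}$ as the zero scheme of $\psi_{r,\d,\s}$, and $\mathcal O_S(-1)$ is trivial, I will use
$$
\omega_{\Delta_{r,\s}}=\omega_{\Delta_{r,\bf 1}}\otimes \det\Big(\bigoplus_{d_i>r}\mathcal G^1_{d_i}/\mathcal G^{s_i}_{d_i}\Big).
$$
This requires $\psi_{r,\d,\s}$ to be a regular section, i.e. $\Delta_{r,\s}$ to have the expected codimension.

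To justify this, I will argue exactly as for the previous loci. For fixed $(x,y,\ell)$, the conditions on $\F$ are linear and independent, because restriction $S^{d_i}\to H^0(\ell,\mathcal O_\ell(d_i))$ is surjective. Hence $\Delta_{r,\s}$ is a vector bundle over the smooth variety of triples $(x,y,\ell)$ with $x,y\in \ell$, and is therefore smooth of the expected dimension. The determinant is computed by \eqref{eq:c_1 ter}. One point needs care: $\mathcal G^\bullet_m$ is a filtration on $R^r\otimes \textrm{Sym}^{m-r}\mathcal S^\vee$, so the graded pieces must be twisted by $R^r$, which contributes $r(H_1$ and $L)$-terms. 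I expect the final bookkeeping, in which the $H_1$-coefficient $\sum_{d_i\ge r} r(d_i-r+1)$ and the $L$ and $H_2$ coefficients with $s_i$ appear, to come out by adding these contributions to $\omega_{\Delta_{r,\bf 1}}$.

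Next I will pass to the fibre over a general $\F$. Since $\pi_S\colon\Delta_{r,\s}\to S$ has relative canonical bundle $\omega_{\Delta_{r,\s}}\otimes\pi_S^*\omega_S^{-1}$ and $\omega_S$ is trivial, generic smoothness gives $\omega_{\Delta_{r,\s,\F}}=(\omega_{\Delta_{r,\s}})_{|\Delta_{r,\s,\F}}$ for general $\F$.

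For the birationality statement, I will consider the map $\Delta_{r,\s,\F}\to \mathrm{Bl}_{\textrm{Diag}}(\mathbb P^n\times\mathbb P^n)$ sending $(x,y,\ell)$ to $(x,y)$ when $x\ne y$. On the open set $x\neq y$ the line is determined by $\ell=\langle x,y\rangle$, so this is an isomorphism onto the corresponding open subset of \eqref{eq:Lambda1bis}. The vanishing conditions coincide: order $r$ at $x$ and order $s_i=d_i-r$ at $y$. It then remains to check that for general $\F$ neither side has a component contained in $\{x=y\}$, or in the exceptional divisor on the blow-up side.

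This last check is the main obstacle, and I will handle it by a dimension count. The locus $x=y$ imposes vanishing of order $d_i$ at $x$ (or containment of $\ell$ when $d_i<r$), which costs one more condition per $i$ with $d_i\ge r$ than the generic stratum does, and has one fewer parameter than dropping $y$ would suggest. For general $\F$ this gives a locus of strictly smaller dimension than the expected dimension \eqref{eq:dim-lambda}. Both spaces are smooth of that expected dimension, hence equidimensional, so no component can lie inside it and the map is birational.
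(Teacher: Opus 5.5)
Your route is the paper's: adjunction on top of $\omega_{\Delta_{r,\bf 1}}$ using \eqref{eq:c_1 ter}, plus the map $(x,y,[\ell])\mapsto(x,y)$ off the diagonal. The birationality half is fine. In fact $P_2:=\{(x,y,[\ell])\mid x,y\in\ell\}\cong\mathrm{Bl}_{\mathrm{Diag}}(\bP^n\times\bP^n)$, so with the divisorial definition the map is an isomorphism onto $V(s_{\F,r})\subset Z$. (One correction to your count: the diagonal stratum imposes the same $d_i$ conditions as the generic one; it simply has one parameter fewer.)

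The gap is the step you leave as an expectation, that the bookkeeping returns the displayed class. It does not, and the twist you noticed is exactly where it breaks. With the $R^r$ twist, the graded pieces of $\mathcal G^1_{d_i}/\mathcal G^{s_i}_{d_i}$ are $R^r\otimes p_2^*\mathcal O(s_i-2j)\otimes\pi_{\bG}^*\mathcal O_{\bG}(j)$ for $1\le j\le s_i-1$. Their total first Chern class is $r(s_i-1)(L-H_1)+\binom{s_i}{2}L$, with no $H_2$-part. Added to $\omega_{\Delta_{r,\bf 1}}$, this gives $H_2$-coefficient $d-2$, not the displayed one.

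Worse, $\omega_{\Delta_{r,\bf 1}}$ is not legitimately available. On $\Delta_r$ one already has $F_i(x)=0$, so evaluation at $y$, viewed as a section of $p_2^*\mathcal O(d_i)$, vanishes identically along $x=y$. Hence $\Delta_{r,\bf 1}$ contains the whole diagonal locus over $\Delta_r$, of dimension $\dim\Delta_r$, which is at least the expected $\dim\Delta_r+1-c$. On that locus $\psi_{r,\d,\s}$ is in general not even defined, since $F_i(y)=0$ does not put $F_{i|\ell}$ in $\mathcal G^1$. Your vector-bundle argument proves regularity for the divisorial condition over $P_2$, not for this two-step chain.

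Now run adjunction where your regularity argument actually lives. Work on $P_2$, with $K_{P_2}=-2H_1-2H_2+(1-n)L$. Impose $F_{i|\ell}=0$ for $d_i<r$, and $F_{i|\ell}\in\mathcal L_{r,s_i}$ for $d_i\ge r$, where $c_1(\mathcal L_{r,s_i})=d_iL-rH_1-s_iH_2$. Equivalently, impose $g_i(y)=0$ for $F_{i|\ell}=\ell_x^r g_i$ in $R^r\otimes p_2^*\mathcal O(d_i-r)$, rather than in $p_2^*\mathcal O(d_i)$. You get exactly the class of Lemma \ref{lem:can-bicontact}:
\[
(\beta r-2)H_1+\Big(\sum_{d_i\ge r}(d_i-r)-2\Big)H_2+\Big(1-n+\sum_{d_i<r}\binom{d_i+1}{2}+\sum_{d_i\ge r}\binom{d_i}{2}\Big)L .
\]
This differs from the displayed class by $r\big(\sum_{d_i>r}s_i\big)(H_1+H_2-L)$. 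Here $H_1+H_2-L$ is the class of the exceptional divisor, which meets $\Delta_{r,\s,\F}$ in the pairs $(x,\ell)$ with $F_{i|\ell}$ vanishing to order $d_i$ at $x$.

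A concrete check: take $n=2$, $c=1$, $d_1=3$, $r=1$, $s_1=2$ and a general cubic $C$. Then $\Delta_{1,2,\F}\cong C$ via $y$, with $\ell=T_yC$ and $x$ the residual point, so its canonical class is $0$. On it $\deg H_1=12$, $\deg H_2=3$ and $\deg L=6$. The displayed class $H_1+2H_2$ has degree $18=2\cdot 9$ (the nine flexes), whereas $-H_1+2L$ has degree $0$.

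So the first assertion cannot be proved as stated. A correct version of your argument yields the formula of Lemma \ref{lem:can-bicontact} instead. The same holds for the paper's argument, which reaches the displayed class precisely by using \eqref{eq:c_1 ter} untwisted and evaluating at $y$ in $p_2^*\mathcal O(d_i)$.
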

\begin{proof}
    Since 
$\Delta_{r,\s}$ is the zero scheme of $\psi_{r,\d,\s}$ on $\Delta_{r,\bf 1}$  
  by adjunction we have that
$$
\omega_{\Delta_{r,\s}} = \omega_{\Delta_{r,\bf 1}} \otimes c_1\left(\oplus_{d_i> r}\mathcal G^1_{d_i} / \mathcal G^{s_i}_{d_i}\right).
$$
Now the formula for its canonical class follows from  (\ref{eq:c_1 ter}). 

For the second part of the statement, we have an obvious birational map $\Delta_{r,\d-r,\F}\dashrightarrow \widetilde \Lambda_{r,\F}$ defined outside the diagonal of $\bP^n\times \bP^n$, which sends $(x,y,[\ell])$ to $(x,y)$.  
\end{proof}

\section{Multiplication maps}\label{sec:CVmaps}
We have recalled in Section \ref{subs:nonfunziona} that the difficulty of extending Ein's result to lower degrees lies in the lack of global generation of the vector bundles $\bigwedge^r M_{\bf{d}}(s)$, when $s<r$.
The goal of the section is to 
systematically study this lack of positivity, and deduce algebraic information on the subspaces lying in the base locus, namely the fact that any such a subspace must have an unusually large intersection with the ideal of a line.  
To this aim, we extend to any $c\geq 1$ the approach of \cite[\S 2.2]{C03} and \cite[\S 3]{CR04} for the case $c=1$, which in turn  generalizes \cite[Lemma 2.3]{V1}, where the base locus was first studied for $c=1,r=2$  and $s=1$ (see also \cite[Lemma 3.2]{P03}, where the case $c=1,r=4,s=2$ appeared). 

\smallskip
Let us consider a point $x\in \mathbb{P}^n$, and the space $S_x^{\mathbf{d}}$ of $c$-tuples $\mathbf{Q}=(Q_1,\dots, Q_c)$, where any $Q_j$ is a homogeneous polynomial of degree $d_j$ vanishing at $x$. 
Let $T\subset S_x^{\mathbf{d}}$ be a subspace of codimension $t:=\codim T$.
Consider the vector space
$$S^{\mathbf{d}-\mathbf{1}}:= \bigoplus_{i=1}^c S^{d_i-1},$$
and let $\mathbf{P}_1,\dots,\mathbf{P}_s\in S^{\mathbf{d}-\mathbf{1}}$ be general elements.
Then we define the linear map
\begin{equation}\label{eq:mu}
\mu_s=\mu_{s,T}\colon \left(S^1_x\right)^{\oplus s}\longrightarrow \frac{S_x^{\mathbf{d}}}{T},
\end{equation}
which sends a $s$-tuple $(L_1,\dots,L_s)$ of linear polynomials vanishing at $x$ to the vector 
$$\mu_s\left(L_1,\dots,L_s\right):=L_1 \cdot \mathbf{P}_1+\dots + L_s\cdot \mathbf{P}_s \mod T,$$
where $L_j \cdot \mathbf{P}_j$ denotes the multiplication by $L_j$ of any component of $\mathbf{P}_j$. 
Moreover, for any $s\in \mathbb{N}$, we set
$$
\gamma(s):=\left\{\begin{array}{ll} 0 &\text{if }s=0\\ \dim(\operatorname{Im} \mu_s) & \text{otherwise.}\end{array}\right.
$$
\begin{remark}\label{rem:monotonia}
Since $\mathbf{P}_1,\dots,\mathbf{P}_s\in S^{\mathbf{d}-\mathbf{1}}$ are general, the following are straightforward:
\begin{itemize}
  \item[(i)] For any $s\geq 0$, the function $\gamma(s)$ is strictly increasing until $\mu_s$ is surjective. 
  In particular, since $\dim \big( S_x^{\mathbf{d}}/T\big)=t$, the map $\mu_t$ is surjective.
  \item[(ii)] The function $\gamma(s+1)-\gamma(s)$ is non-increasing, and it vanishes if and only if $\mu_s$ is surjective.
\end{itemize}
\end{remark}
In view of Remark \ref{rem:monotonia}, we define the integers
\begin{equation}\label{eq:s&s'}
    s_T:=\min \left\{\left. s\in \mathbb{Z}_{>0}\right|\mu_s \text{ is surjective}\right\} \quad\text{and}\quad s'_T:=\min \left\{\left. s\in \mathbb{N}\right|\gamma(s+1)-\gamma(s)\leq 1\right\}.
\end{equation}
Hence $s'_T\leq s_T\leq t$ and 
\begin{equation}\label{eq:T}
t=\gamma(s'_T)+(s_T-s'_T)\geq 2s'_T+(s_T-s'_T)=s'_T+s_T.    
\end{equation} 
In particular, if $s'_T=s_T$ we deduce that
\begin{equation}\label{eq:s'=s}
s_T\leq t/2. 
\end{equation}  
On the other hand, when $s_T-s'_T\geq 2$ the following holds.

\begin{lemma}\label{lem:retta}
Using notation as above, we assume that $s_T-s'_T\geq 2$.
\begin{itemize}
  \item[(i)] There exists a unique line $\ell=\ell_T\subset \mathbb{P}^n$ passing through $x$ such that 
  \begin{equation}\label{eq:ideale^d}
  S_{\ell}^{\mathbf{d}}\subseteq T + \sum_{i=1}^{s'_T}  S^1_x \cdot \mathbf{P}_i 
  \end{equation}
  (for the definition of $S_\ell^{\bf d}$ see \eqref{eq:S_Z}).
  \item[(ii)] The line does not dependent on the choice of the general elements $\mathbf{P}_1,\dots,\mathbf{P}_{s'_T}\in S^{\mathbf{d}-\mathbf{1}}$.
  \item[(iii)] We have
  $$
  \dim\frac{T}{T\cap S_{\ell}^{\mathbf{d}}}=\dim\frac{T+ S_{\ell}^{\mathbf{d}}}{S_{\ell}^{\mathbf{d}}}=d-s_T\quad\text{and}\quad  
  \dim\frac{S_{\ell}^{\mathbf{d}}}{T\cap S_{\ell}^{\mathbf{d}}}=\dim\frac{T+ S_{\ell}^{\mathbf{d}}}{T}=\gamma(s'_T)-s'_T.
  $$
\end{itemize}

\begin{proof}
(i) and (ii). 
Let $\mathbf{P}\in S^{\mathbf{d}-\mathbf{1}}$ be a general element. Since $s_T-s'_T\geq 2$, by Remark \ref{rem:monotonia}, item (ii), and the definition of $s'_T$ we have that $\gamma(s'_T+1)-\gamma(s'_T)=1$.
Therefore, 
\begin{equation}\label{eq:n-1}
\dim\left(S^1_x\cdot \mathbf{P} \cap \left(T+ \sum_{i=1}^{s'_T}  S^1_x \cdot \mathbf{P}_i  \right)\right)=\dim (S^1_x \cdot \mathbf{P}) -1=n-1. 
\end{equation}
Hence there exists a basis $\{L_1,\dots,L_n\}$ of  $S_{x}^{1}$ such that 
$$
\langle L_1 \cdot \mathbf{P},\dots, L_{n-1}\cdot \mathbf{P}\rangle \subseteq T + \sum_{i=1}^{s'_T}  S^1_x \cdot \mathbf{P}_i \quad \text{and} \quad L_n\cdot \mathbf{P}\not\in T +  \sum_{i=i}^{s'_T}  S^1_x \cdot \mathbf{P}_i.
$$
We define the line $\ell=\ell_T\subset \mathbb{P}^n$ for which (\ref{eq:ideale^d}) will hold by the vanishing of the linear forms $L_1,\dots,L_{n-1}$, so that $S^1_{\ell}\cdot \mathbf{P}=\langle L_1\cdot \mathbf{P},\dots, L_{n-1}\cdot \mathbf{P}\rangle$. 
As $\gamma(s'_T+2)-\gamma(s'_T+1)=1$, we also have
\begin{equation}\label{eq:n-1 bis}
\dim\left(S^1_x\cdot \mathbf{P} \cap \left(T+ \sum_{i=1}^{s'_T+1}  S^1_x\cdot  \mathbf{P}_i  \right)\right)=n-1.
\end{equation}
In particular, the $(n-1)$-dimensional subspaces in \eqref{eq:n-1} and \eqref{eq:n-1 bis} coincide.
Therefore, the line $\ell_T$ does not depend on $\mathbf{P}_{s'_T+1}$.
Since $\mathbf{P}_1,\dots,\mathbf{P}_{s'_T+1}\in S^{\mathbf{d}-\mathbf{1}}$ are general, we can reverse the role of $\mathbf{P}_{s'_T+1}$ and $\mathbf{P}_{k}$ for any $1\leq k\leq s'_T$, and we obtain that the intersection 
$$S^1_x\cdot  \mathbf{P} \cap \left(T+ \sum_{i=1}^{k-1}  S^1_x \cdot \mathbf{P}_i + \sum_{i=k+1}^{s'_T+1}  S^1_x \cdot \mathbf{P}_i  \right)$$ 
is the same as \eqref{eq:n-1 bis}.
Thus the line $\ell_T$ does not depend on the choice of $\mathbf{P}_1,\dots,\mathbf{P}_{s'_T+1}$, as claimed in (ii).

\smallskip
In order to prove \eqref{eq:ideale^d}, it is enough to show that for a general $\mathbf{Q}\in S^{\mathbf{d}-\mathbf{1}}$, we have 
\begin{equation}\label{eq:inclusion}
S^1_\ell\cdot \mathbf{Q}\subseteq T + \sum_{i=1}^{s'_T}  S^1_x \cdot \mathbf{P}_i.
\end{equation} 
We recall that if $V$ and $W$ are vector spaces, and $Z:=\left\{\left.\phi\in \operatorname{Hom}(V,W)\right| \dim(\operatorname{Im}\phi)\leq k \right\}$ (for a certain integer $k\geq 1$), then the tangent space to $Z$ at $\phi$ is 
$T_{Z,\phi}=\left\{\left.\psi\in \operatorname{Hom}(V,W)\right| \psi(\operatorname{Ker}\phi)\subseteq \operatorname{Im}\phi \right\}$. 
We apply this fact to the map 
$$\nu_{\mathbf{P}}\colon S^1_x\longrightarrow \frac{S^{\mathbf{d}}_x}{T+  \sum_{i=i}^{s'_T}  S^1_x \cdot \mathbf{P}_i} \quad \text{such that} \quad L\longmapsto L\cdot \mathbf{P} \mod  T+  \sum_{i=1}^{s'_T}  S^1_x \cdot \mathbf{P}_i.$$
Letting $\mathbf{Q}\in S^{\mathbf{d}-\mathbf{1}}$ vary, the maps $\nu_{\mathbf{Q}}$ describe a linear subspace of $\operatorname{Hom}(V,W)$. 
Hence $\nu_{\mathbf{Q}}\in T_{Z,\nu_{\mathbf{P}}}$, and we deduce that $\nu_{\mathbf{Q}}(\operatorname{Ker}\nu_{\mathbf{P}})\subseteq \operatorname{Im} \nu_{\mathbf{P}}$. 
Moreover, as $\operatorname{Ker}\nu_{\mathbf{P}}= S^1_{\ell}$ and $S^1_{x}= S^1_{\ell} + \langle L_n\cdot \mathbf{P} \rangle$, we conclude that
$$
S^1_{\ell}\cdot \mathbf{Q}\subseteq S^1_{x}\cdot \mathbf{P} + T +  \sum_{i=1}^{s'_T}  S^1_x\cdot  \mathbf{P}_i= \langle L_n\cdot \mathbf{P} \rangle + T +  \sum_{i=1}^{s'_T}  S^1_x\cdot  \mathbf{P}_i.
$$

Aiming for a contradiction, we suppose that for a general $\mathbf{Q}\in S^{\mathbf{d}-\mathbf{1}}$, the space $S^1_{\ell}\cdot \mathbf{Q}$ is not entirely contained in $T +  \sum_{i=1}^{s'_T}  S^1_x\cdot  \mathbf{P}_i$.
Therefore, $L_n\cdot \mathbf{P}\in S^1_{\ell}\cdot \mathbf{Q} + T +  \sum_{i=1}^{s'_T}  S^1_x\cdot  \mathbf{P}_i$.
Setting $\mathbf{P}_{s'_T+1}=\mathbf{Q}$, we then obtain
$$
S^1_{x}\cdot \mathbf{P}\subseteq T+ \sum_{i=1}^{s'_T+1}  S^1_x \cdot \mathbf{P}_i , 
$$
which contradicts \eqref{eq:n-1 bis}.
Thus, given a general $\mathbf{Q}\in S^{\mathbf{d}-\mathbf{1}}$, we have that $S^1_{\ell}\cdot \mathbf{Q}\subseteq T +  \sum_{i=1}^{s'_T}  S^1_x\cdot  \mathbf{P}_i$, which is \eqref{eq:inclusion}.
Hence we have shown inclusion \eqref{eq:ideale^d}.  We point out that the line $\ell_T$ is uniquely determined by the subspace $T\subset S_x^{\mathbf{d}}$, as $\ell_T= V\left(L_1,\dots, L_{n-1}\right)$ does not depend on the choice of the basis $\{L_1,\dots,L_n\}\subset S_x^{1}$.
Indeed, the intersection in \eqref{eq:n-1} coincides with $S^1_{\ell}\cdot \mathbf{P}=\langle L_1\cdot \mathbf{P},\dots, L_{n-1}\cdot \mathbf{P}\rangle$, and any other basis $\{L'_1,\dots,L'_n\}\subset S_x^{1}$ such that $S^1_{\ell}\cdot \mathbf{P}=\langle L'_1\cdot \mathbf{P},\dots, L'_{n-1}\cdot \mathbf{P}\rangle$, must satisfy $L'_1,\dots, L'_{n-1}\in S^1_{\ell}$, so that $\ell_T=V\left(L'_1,\dots, L'_{n-1}\right)$.
Item (i) is then fully proved.

\smallskip
So, it remains to prove (iii). 
Let $W:=T+S_{\ell}^{\mathbf{d}}$ and let $w:=\operatorname{codim}W$ be its codimension in $S_{x}^{\mathbf{d}}$.
It follows from \eqref{eq:ideale^d} that $W\subseteq T+ \sum_{i=1}^{s'_T}  S^1_x \cdot \mathbf{P}_i$, and since $s'_T<s_T$, we have $w\geq 1$.
As in \eqref{eq:mu}, we consider the linear map 
\begin{equation*}
\mu_{w,W}\colon \left(S^1_x\right)^{\oplus w}\longrightarrow \frac{S_x^{\mathbf{d}}}{W} \quad \text{such that} \quad \left(L_1,\dots,L_w\right) \longmapsto L_1 \cdot \mathbf{P}_1+\dots + L_w\cdot \mathbf{P}_w \mod  W.
\end{equation*}
Since $w=\codim W$, the map $\mu_{w,W}$ is surjective (cf. Remark \ref{rem:monotonia}(i)), that is $W+\sum_{i=1}^{w}  S^1_x\cdot  \mathbf{P}_i=S_x^{\mathbf{d}}$.
We claim that $w=s_T$.
Indeed, if $w\leq s_T-1$, then \eqref{eq:ideale^d} would give
$$
S_x^{\mathbf{d}}=W+\sum_{i=1}^{w}  S^1_x\cdot  \mathbf{P}_i\subseteq T+ \sum_{i=1}^{s_T-1}  S^1_x \cdot \mathbf{P}_i, 
$$
so that the map $\mu_{s_T-1,T}$ is surjective, a contradiction. Hence 
\begin{equation}\label{eq:w=s_T}
    \dim\big(S_{x}^{\mathbf{d}}\big/ W\big)=w=s_T.
\end{equation}

We note further that
$$\dim\frac{S_{x}^{\mathbf{d}}}{S_{\ell}^{\mathbf{d}}}=\sum_{i=1}^c \dim \frac{S_{x}^{d_i}}{S_{\ell}^{d_i}}=\sum_{i=1}^c d_i=d.$$
Therefore, using $\dim\left(T+ S_{\ell}^{\mathbf{d}}\right)+\dim\left(T\cap S_{\ell}^{\mathbf{d}}\right)=\dim T+\dim S_{\ell}^{\mathbf{d}}$, we obtain
$$\dim\frac{T}{T\cap S_{\ell}^{\mathbf{d}}}=\dim\frac{T+ S_{\ell}^{\mathbf{d}}}{S_{\ell}^{\mathbf{d}}}=\dim\frac{W}{S_{\ell}^{\mathbf{d}}}=\dim\frac{S_{x}^{\mathbf{d}}}{S_{\ell}^{\mathbf{d}}}-\dim\frac{S_{x}^{\mathbf{d}}}{W}=d-s_T,
$$
where in the last equality we used (\ref{eq:w=s_T})
So  the first chain of equalities of (iii) is proved.
In order to deduce the second one, notice that \eqref{eq:T} gives
$$
\dim\frac{S_{\ell}^{\mathbf{d}}}{T\cap S_{\ell}^{\mathbf{d}}}=\dim\frac{T+ S_{\ell}^{\mathbf{d}}}{T}=\dim W - \dim T=t-w=t-s_T=\gamma(s'_T)-s'_T,
$$
as claimed.
\end{proof}
\end{lemma}

More generally, let $Y$ be a smooth variety  endowed with a morphism $f\colon Y\longrightarrow \mathbb{P}^n$, and let us consider a subbundle $\mathcal{T}\subset f^*({M}_{\mathbf{d}})$ of corank $=t$, where ${M}_{\mathbf{d}}$ is the vector bundle defined in (\ref{eq:M_d}). 
Given general elements $\mathbf{P}_1,\dots,\mathbf{P}_s\in S^{\mathbf{d}-\mathbf{1}}$, we introduce the map
\begin{equation}\label{eq:mu2} 
\mu_{s,\mathcal{T}}\colon \left(f^*({M}_{\mathbf{d}})\right)^{\oplus s}\longrightarrow \frac{f^*({M}_{\mathbf{d}})}{\mathcal{T}},
\end{equation}
defined as in \eqref{eq:mu}. 

The following result is the analog of \cite[Lemma 3.1]{CR04}.

\begin{lemma}\label{lem:3.1}
Let $s_\mathcal{T}$ be the least integer such that the map \eqref{eq:mu2} is generically surjective.
Then
$$H^0\left( \det \left(\frac{f^*({M}_{\mathbf{d}})}{\mathcal{T}}\right)\otimes f^*\mathcal O_{\mathbb P^n}(s) \right)\not= 0.$$
\begin{proof}
To lighten the notation, for any integer $m$ we will write  $\mathcal O(m)$ instead of $\mathcal O_{\mathbb P^n}(m)$. 
We consider the map 
\begin{equation}\label{eq:3.2}
    \oplus_{i=1}^s f^*({M}_1)\longrightarrow \frac{f^*({M}_{\mathbf{d}})}{\mathcal{T}},\ (L_1,\ldots, L_s)\mapsto L_1\cdot \mathbf{P}_1+\dots +L_s\cdot \mathbf{P}_s\ \mod{\mathcal T}.
\end{equation}
    Taking its $t$-th exterior power and tensoring it with $\mathcal O(s)$ we get a generically surjective map
    $$
    \bigoplus_{i_1+\dots+i_s} \left( \wedge^{i_1}f^*({M}_1)\otimes \ldots \otimes \wedge^{i_s}f^*({M}_1) \right)\otimes \mathcal O(s) =\left( \bigwedge^t ( \oplus_{i=1}^s f^*({M}_1)\right) \otimes \mathcal O(s)\longrightarrow \left(\bigwedge^t \frac{f^*({M}_{\mathbf{d}})}{\mathcal{T}}\right)\otimes \mathcal O(s).
    $$
    As we have 
    $$
    \bigoplus_{i_1+\dots+i_s} \left( \wedge^{i_1}f^*({M}_1)\otimes \ldots \otimes \wedge^{i_s}f^*({M}_1) \right)\otimes \mathcal O(s) =
    \bigoplus_{i_1+\dots+i_s} \left( \wedge^{i_1}( f^*({M}_1)\otimes \mathcal O(1) ) \otimes \ldots \otimes \wedge^{i_s}( f^*({M}_1)\otimes \mathcal O(1) )\right)
    $$
    we obtain a generically surjective map from a globally generated vector bundle onto 
    $$
    \det \left(\frac{f^*({M}_{\mathbf{d}})}{\mathcal{T}}\right)\otimes f^*\mathcal O_{\mathbb P^n}(s) 
    $$
    and we are done. 
\end{proof}
\end{lemma}
Recall that we are in the following framework. 
We consider the universal family $\mathcal X\longrightarrow S^\d$ of complete intersections of type $\d\coloneqq(d_1,\ldots, d_c)$ in $\mathbb P^n$, and a family of $k$-dimensional subvarieties $\mathcal Y\subset \mathcal X$ (we omit the base-change to render the notation less heavy).

\begin{lemma}\label{lem:neq 0}
Let $\mathcal{T}\coloneqq T_{\mathcal{Y}}^{\textrm{vert}}$ and let $s_\mathcal{T}$ be the least integer such that the map \eqref{eq:mu2} is generically surjective. Then
\begin{equation}\label{eq:nonvan}
    H^0\left(\omega_{\widetilde Y_{\mathbf{F}}}(-d+n+1+s_\mathcal{T})\right)\neq 0.
\end{equation}
In particular, if for $a\in\{0,1\}$ 
the restriction map
\begin{equation}\label{eq:vanish}
  H^0\left({\Omega^{N+k}_{\mathcal X}}_{|X_{\bf F}}(-a)\right)\longrightarrow H^0\left({\Omega^{N+k}_{\tilde {\mathcal Y}}}_{|\tilde Y_{\bf F}}(-a)\right)
\end{equation}
vanishes, 
then
\begin{equation}\label{eq:ineq-s_T}
   d-a<n+1+s_\mathcal{T}. 
\end{equation}
\end{lemma}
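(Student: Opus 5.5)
The plan is to apply Lemma \ref{lem:3.1} to the desingularization $f=\pi\circ j\colon \widetilde Y_{\F}\longrightarrow \mathbb P^n$ of a general fibre, with $\mathcal T$ the restriction of $T^{\textrm{vert}}_{\widetilde{\mathcal Y}}$ to $\widetilde Y_\F$. The first step is to identify the quotient appearing there. By \eqref{eq:M-Tvert}, $(T^{\textrm{vert}}_{\mathcal X})_{|X_\F}\cong {M_\d}_{|X_\F}$, so $f^*M_\d/\mathcal T$ is the restriction of $N_{\widetilde{\mathcal Y}/\mathcal X}$ by \eqref{eq:normal}. This restriction has rank $t=n-c-k$. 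Since both $\mathcal Y$ and $\mathcal X$ are families over $U$ and $\pi_*$ is surjective, it coincides generically with the normal sheaf $N_{\widetilde Y_\F/X_\F}$ of the map $\widetilde Y_\F\to X_\F$.

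Taking determinants gives a nonzero morphism of line bundles
$$\det\Big(\tfrac{f^*M_\d}{\mathcal T}\Big)\otimes f^*\omega_{X_\F}\longrightarrow \omega_{\widetilde Y_\F},$$
which is the adjunction map in top degree and is an isomorphism on the open set where $\widetilde Y_\F\to X_\F$ is an immersion. Since $\omega_{X_\F}=\mathcal O_{X_\F}(d-n-1)$, twisting by $\mathcal O(s_{\mathcal T}-d+n+1)$ yields an injection
$$H^0\Big(\det\big(\tfrac{f^*M_\d}{\mathcal T}\big)(s_{\mathcal T})\Big)\hookrightarrow H^0\big(\omega_{\widetilde Y_\F}(-d+n+1+s_{\mathcal T})\big).$$
The source is nonzero by Lemma \ref{lem:3.1}, which proves \eqref{eq:nonvan}.

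For the second statement I argue by contradiction: assume $d-a\geq n+1+s_{\mathcal T}$. The key point is that the section given by Lemma \ref{lem:3.1} is not merely some section on $\widetilde Y_\F$, but is the image of a global section on $X_\F$. Indeed, in that proof it comes from a section of $\bigwedge^t\big(\oplus_{i=1}^{s} M_1(1)\big)$ on $\mathbb P^n$, which is globally generated by Lemma \ref{lem:gg}. Through multiplication by the $\mathbf P_i$ this section maps into $H^0\big(\bigwedge^t {M_\d}_{|X_\F}(s_{\mathcal T})\big)$, and its image in $\det\big(f^*M_\d/\mathcal T\big)(s_{\mathcal T})$ is nonzero at a general point of $\widetilde Y_\F$. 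Multiplying by a section of $\mathcal O_{X_\F}(d-a-n-1-s_{\mathcal T})$ not vanishing identically on $Y_\F$ (such a section exists since the degree is $\geq 0$) produces an element of
$$H^0\big(\textstyle\bigwedge^t {M_\d}_{|X_\F}\otimes\omega_{X_\F}(-a)\big)\subset H^0\big({\Omega^{N+k}_{\mathcal X}}_{|X_\F}(-a)\big),$$
using the identification of Remark \ref{subs:nonfunziona}. Its image under \eqref{eq:vanish} is the composite of the above adjunction morphism with a generically nonzero section, hence is nonzero. This contradicts the assumed vanishing, so $d-a<n+1+s_{\mathcal T}$.

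I expect the main obstacle to be checking compatibility of the identifications. One has to verify that the restriction map \eqref{eq:vanish}, composed with the isomorphisms $\Omega^{N+k}_{\mathcal X}\cong \bigwedge^{t}T_{\mathcal X}\otimes K$ of Remark \ref{subs:nonfunziona}, agrees on $\bigwedge^t M_\d$ with the map $\bigwedge^t f^*M_\d\to\det(f^*M_\d/\mathcal T)$ followed by adjunction. This amounts to a local check at a general point where $\widetilde{\mathcal Y}\to\mathcal X$ is an immersion, using \eqref{eq:normal}.
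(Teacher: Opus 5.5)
Your proposal is correct and follows the paper's proof: Lemma \ref{lem:3.1} on a general fibre, identification of $f^*M_\d/\mathcal T$ with the normal sheaf via \eqref{eq:normal} and \eqref{eq:M-Tvert}, then adjunction with $\omega_{X_\F}=\mathcal O_{X_\F}(d-n-1)$; for the ``in particular'' you also make explicit what the paper calls immediate, namely that the section comes from $H^0(\bigwedge^t M_{\d|X_\F}(s_{\mathcal T}))$ and so lies in the image of \eqref{eq:vanish}, which is genuinely needed because $h^0(\omega_{\widetilde Y_\F}(-a))>0$ alone does not contradict that vanishing (e.g.\ in the situation of Remark \ref{rmk:mumford}). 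One slip, also present in the paper's proof of Lemma \ref{lem:3.1}: the globally generated source is $\bigwedge^t\big(\oplus_{i=1}^{s}M_1\big)\otimes\mathcal O(s)=\bigoplus_{i_1+\dots+i_s=t}\bigotimes_{j}\big(\wedge^{i_j}M_1\otimes\mathcal O(1)\big)$, not $\bigwedge^t\big(\oplus_{i=1}^{s}M_1(1)\big)$, which carries the twist $\mathcal O(t)$ and would only recover Ein's bound, and its global generation comes from $\wedge^iM_1\otimes\mathcal O(1)\cong\Omega^i_{\mathbb P^n}(i+1)$ being globally generated rather than from Lemma \ref{lem:gg}.
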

\begin{proof}
    From Lemma \ref{lem:3.1} and equations (\ref{eq:normal}) and (\ref{eq:M-Tvert}) restricted to a general  fiber over $\bf{F}\in S^{\bf d}$, we deduce that 
    $$
    h^0\left(\det (N_{\widetilde {\mathcal Y}/{\mathcal X}}\right)_{|\widetilde Y_{\bf F}}\otimes f^*\mathcal O_{\mathbb P^n}(s))>0. 
    $$
    Since by adjunction 
    $$\det \left(N_{\widetilde {\mathcal Y}/{\mathcal X}}\right)_{|\widetilde Y_{\bf F}}= \omega_{\tilde Y_{\bf F}}\otimes j^*\omega^{-1}_{X_{\bf F}}=\omega_{\widetilde Y_{\bf F}} (-d+n+1),
    $$
    by twisting with $s_{\mathcal T}$ we obtain the non-vanishing (\ref{eq:nonvan}). Inequality (\ref{eq:ineq-s_T}) follows immediately from (\ref{eq:nonvan}).
\end{proof}

\begin{corollary}\label{cor:s-s'}
Let $\mathcal{T}\coloneqq T_{\mathcal{Y}}^{\textrm{vert}}$ and let $s_\mathcal{T}$ (respectively $s'_\mathcal{T}$) be the least integer such that such that the map \eqref{eq:mu2} is generically surjective (resp. such that generically $\rk (\mu_{s+1, \mathcal{T}})-\rk (\mu_{s, \mathcal{T}})\leq 1$).
If \eqref{eq:bound lineare} holds and the restriction map \eqref{eq:vanish} vanishes, 
then 
$$s_\mathcal{T}-s_\mathcal{T}'\geq 2 \quad \text{and}\quad s_\mathcal{T}> \left\lfloor\frac{n-k-c+1}{2}\right\rfloor +a.$$
\begin{proof}
We recall that, by (\ref{eq:normal}) and (\ref{eq:M-Tvert}), the corank of $\mathcal{T}$ in $g^*\left(\mathcal{M}^{\mathbf{d}}_{\mathbb{P}^n}\right)$ is $n-k-c$.
If $s_\mathcal{T}-s_\mathcal{T}'=0$, it follows from \eqref{eq:s'=s} that $s_\mathcal{T}\leq \frac{n-k-c}{2}$.
If instead $s_\mathcal{T}-s'_\mathcal{T}=1$, then \eqref{eq:t} yields $n-k-c\geq s_\mathcal{T}+s'_\mathcal{T}=2s_\mathcal{T}-1$.
Therefore, in both cases we have that $s_\mathcal{T}\leq \left\lfloor\frac{n-k-c+1}{2}\right\rfloor$, and hence $H^0\left(\omega_{Y_{\mathbf{F}}}\left(-d+n+1+\left\lfloor\frac{n-k-c+1}{2}\right\rfloor\right)\right)\neq 0$ by Lemma \ref{lem:neq 0}. 
Since the restriction map (\ref{eq:vanish}) vanishes, 
we also deduce that $d -a<n+1+\left\lfloor\frac{n-k-c+1}{2}\right\rfloor$, which contradicts \eqref{eq:bound lineare}.
Thus we conclude that $s_\mathcal{T}-s'_\mathcal{T}\geq 2$ and $s_\mathcal{T}> \left\lfloor\frac{n-k-c+1}{2}\right\rfloor+a$.
\end{proof}
\end{corollary}
\begin{remark}\label{rmk:mumford}
    The restriction map \eqref{eq:vanish} obviously vanishes if  
$H^0\left(\omega_{\widetilde Y_{\mathbf{F}}}(-a) \right) = 0$. Another important case, which will be used in the proof of Theorem \ref{thm:CLS+}, is when $a=0$ and $Y_\F$ is a $k$-dimensional subvariety whose points are all rationally equivalent in $X$ (where by rational equivalence we mean $\mathbb Q$-rational equivalence). Indeed the variational (and higher dimensional) version of Mumford's theorem on $0$-cycles yields in this case the vanishing of the map (\ref{eq:vanish}) (see \cite[Proposition 22.24]{Vbook} for the absolute case, \cite[Proposition 1.8]{V94} for the case of relative dimension 2, and \cite[Lemma 1]{V2} and \cite[Section 2.2]{P03} for the same remark in the general case). 
\end{remark}

\section{Integrability of the vertical contact distribution and proof of Theorem \ref{thm:CLS+}}\label{s:int}

Using the notation introduced in Section \ref{s: preliminaries}, we consider the universal family $\mathcal X\longrightarrow S^\d$  of complete intersections of type $\d\coloneqq(d_1,\ldots, d_c)$ in $\mathbb P^n$ and a family of $k$-dimensional subvarieties $\mathcal Y\subset \mathcal X$ (we omit the base-change to render the notation less heavy).
We assume that \eqref{eq:bound lineare} holds and that the restriction map (\ref{eq:vanish}) vanishes.

In the light of Corollary \ref{cor:s-s'} and Lemma \ref{lem:retta}, we can consider the rational map 
\begin{equation}\label{eq:phi}
\phi\colon \mathcal{Y}\dashrightarrow \mathbb{G}(1,n)
\end{equation}
which sends a general point $(y,\mathbf{F})\in \mathcal{Y}$ to the line $\ell_{(y,\mathbf{F})}\coloneqq\ell_T$, with $T=T_{\mathcal{Y}, (y,\mathbf{F})}^{\textrm{vert}}\cap S_y^{\mathbf{d}}$. 
Hence we have a rational map 
\begin{equation}\label{eq:psi}
    \psi\colon \mathcal{Y}  \dashrightarrow \Delta\coloneqq \left\{(x,[\ell],\mathbf{G})\left|x\in \ell\cap X_{\mathbf{G}}\right.\right\}\subset \mathbb{P}^n\times \mathbb{G}(1,n)\times S^{\mathbf{d}}\\
\end{equation}
sending a general $(y,\mathbf{F})$ to the triple $\left(y,\left[\ell_{(y,\mathbf{F})}\right],\mathbf{F}\right)$.

We note that $\mathcal{T}\coloneqq T_{\mathcal{Y}}^{\textrm{vert}}$ can be thought as a subbundle of $S^{\mathbf{d}}\otimes \mathcal{O}_{\mathcal{Y}}$.
Moreover we may define another subbundle $\mathcal{L}$ of $S^{\mathbf{d}}\otimes \mathcal{O}_{\mathcal{Y}}$, whose fibre at $(y,F)\in \mathcal{Y}$ consists of $c$-tuples $\mathbf{P}\in S^{\mathbf{d}}$ vanishing on the line $\ell_{(y,\mathbf{F})}$, i.e.
\begin{equation}\label{eq:mathcal L}
    \mathcal{L}_{(y,\mathbf{F})}=S^{\mathbf{d}}_{\ell_{(y,\mathbf{F})}}=\bigoplus_{i=1}^c H^0\left(\mathbb{P}^n, \mathcal{I}_{\ell_{(y,\mathbf{F})}}(d_i)\right).
\end{equation}

\smallskip
A multi-polynomial $\F=(F_1,\ldots,F_c)\in S^\d$ can then be seen also as a tangent vector. Therefore we introduce the same notation as \cite[Section 6]{CR04}, adapting it to the case of complete intersections. Let us fix a point $y\in \mathbb{P}^n$ and let us assume that $y$ has coordinates $[1,0,\dots,0]$. More explicitly, for each $i=1,\ldots, c$ we can write 
$$
F_i (X_0,\ldots, X_n)=\sum_{|A|\leq d_i} a_{i,A} X_0^{d_i-|A|}X^A
$$
where $A=(A(1),\ldots, A(n))$ is a multi-index of length $|A|:=A(1)+\ldots+ A(n)\leq d_i$ and $X^A$ denotes the polynomial 
$$ 
X^A=X_1^{A(1)}\cdots X_n^{A(n)}.
$$
If we see $\{X_0^{d_i-|A|}X^A\}$ as a basis of $S^{d_i}$ the corresponding coefficients $a_{i,A}$ yield naturally tangent vectors seen as differentiation operators denoted by $\partial^i_A = \partial/\partial{a_{i,A}}$.
In what follows, for a multi-index $A=(A(1),\ldots, A(n))$ and an integer $1\leq r\leq n$ (or two integers $1\leq r,s\leq n$) we will also use the following notation 
$Ar$ (respectively $Ars$) meaning
$$
Ar=(A(1),\ldots,A(r)+1,\ldots, A(n)),
$$
respectively 
$$
Ars=(A(1),\ldots,A(r)+1,\ldots, A(s)+1,\ldots,A(n)).
$$
Therefore, the corresponding polynomials are
$$
X^{Ar}=X^A\cdot X_r
\quad\text{and} \quad
X^{Ars}=X^A\cdot X_r\cdot X_s
$$

Let
$$\mathcal{Y}_0\coloneqq \pi^{-1}(y)\subset \mathcal{Y}$$ 
be the fibre over $y$ of the projection $\mathcal{Y}\stackrel{\pi}{\longrightarrow} \mathbb{P}^n$.
Let us focus on an analytic neighborhood $U$ of a general point $(y,\mathbf{F})\in \mathcal{Y}_0$.
Of course $U$ maps isomorphically to $S'$, which is \'etale over $S\coloneqq S^{\mathbf{d}}\smallsetminus \{0\}$, and hence $U$ is isomorphic to an open subset of $S$.\\
 Since $S$ is open in $S^{\mathbf{d}}$ we note that 
\begin{equation}\label{eq:tangent analytical}
T_S\cong S^{\mathbf{d}}\otimes \mathcal{O}_S \quad \text{and} \quad T_S\otimes \mathcal{O}_{\mathcal{Y}_0}\cong S^{\mathbf{d}}\otimes \mathcal{O}_{\mathcal{Y}_0}.
\end{equation}

Let us assume that the line $\ell_0\coloneqq \ell_{(y,\mathbf{F})}$ has equations $X_2=\dots=X_n=0$, and for $(y,\mathbf{G})\in U$, the corresponding line has equations $\ell_{(y,\mathbf{G})}=\phi(y,\mathbf{G})$
\begin{equation}\label{eq:b_r}
    X_r=b_r(y,\mathbf{G})X_1\quad \text{for }r=2,\dots,n,
\end{equation}
where the $b_r$'s are holomorphic functions on $U$.
In particular, $b_2(y,\mathbf{F})=\dots=b_n(y,\mathbf{F})=0$, by the choice of the coordinates above.

Under the identification \eqref{eq:tangent analytical}, the generators of the bundle $\mathcal{L}$ are the vector fields
$$
\partial^i_{Ar}-b_r\partial^i_{A1}\in T_S\otimes\mathcal{O}_U\quad \text{with } |A|\leq d_i-1,\quad r=2,\dots,n,\quad i=1,\dots,c,
$$
each corresponding to the polynomial $X_0^{d_i-1-|A|}X^A\left(X_r-b_rX_1\right)$ of degree $d_i$ on the $i$-th component of $S^\mathbf{d}=\bigoplus_{i=1}^c S^{d_i}$.

\subsection{Integrability of the vertical contact distribution}\label{ss:integrability}

We consider the restriction to $\mathcal{Y}_0$ of the bundle $\mathcal{L}$ defined in \eqref{eq:mathcal L}, and we set
$$\mathcal{T}'\coloneqq T_{\mathcal{Y}_0}\cap \mathcal{L},$$
where we note that the restriction of $T_{\mathcal{Y}}^{\textrm{vert}}$ to $\mathcal{Y}_0$ is $T_{\mathcal{Y}_0}$.
By arguing as in \cite[Lemmas 6.1 and 6.2]{CR04} we prove the following.

\begin{proposition}\label{prop:b_r}
    For any $r\geq 2$ and for any section $\sigma$ of $\mathcal{T}'$, we have that $\sigma(b_r)=0$. In particular, the distribution $\mathcal{T}'$ is integrable and the leaf at a point $(y,\mathbf G)$ is given by the affine space $(y,\mathbf G)+ \mathcal T'_{(y,\mathbf G)}$. 

\begin{proof}
    It sufficies to prove the assertion at the general point $(y, \mathbf{F})$.
    To this aim, we define the subbundle $\mathcal{L}_2$ of $S^{\mathbf{d}}\otimes \mathcal{O}_{U}$, whose fibre at $(y,\mathbf{G})\in U$ consists of $c$-tuples $\mathbf{P}\in S^{\mathbf{d}}$ vanishing with order two on the line $\ell_{(y,\mathbf{G})}$, i.e.
    \begin{equation}\label{eq:mathcal L2}
    \mathcal{L}_{2,(y,\mathbf{G})}=S^{\mathbf{d}}_{2\ell_{(y,\mathbf{G})}}=\bigoplus_{i=1}^c H^0\left(\mathbb{P}^n, \mathcal{I}_{2\ell_{(y,\mathbf{G})}}(d_i)\right).
    \end{equation}
    Then we consider the bundle
    $$\mathcal{T}''\coloneqq \mathcal{T}'\cap \mathcal{L}_2,$$
    and we note that the sections of $\mathcal{T}''$ are generated by the vector fields
    $$
    \partial^i_{Ars}-b_r\partial^i_{A1s}-b_s\partial^i_{A1r}+b_rb_s\partial^i_{A11}\quad \text{with } |A|\leq d_i-2,\quad 2\leq r,s\leq n,\quad i=1,\dots,c,
    $$
    corresponding to the polynomials $X_0^{d_i-2-|A|}X^A\left(X_r-b_rX_1\right)\left(X_s-b_sX_1\right)$ of degree $d_i$.
    It is easy to check that given $i,j=1,\dots,c$, for any multi-indices $A,B$ with $|A|\leq d_i-2$ and $|B|\leq d_j-2$, and for any $2\leq r,s,p,q\leq n$, the bracket
    $$
    \left[\partial^i_{Ars}-b_r\partial^i_{A1s}-b_s\partial^i_{A1r}+b_rb_s\partial^i_{A11}\, , \, \partial^j_{Bpq}-b_p\partial^j_{B1q}-b_q\partial^j_{B1p}+b_pb_q\partial^j_{B11}\right]
    $$
    is a combination of elements of the form $\partial^k_{C1m}-b_m\partial^k_{C11}$, with $(k,C)\in\left\{(i,A),(j,B)\right\}$ and $m\in\left\{r,s,p,q\right\}$.
    Therefore $\left[\mathcal{T}'',\mathcal{T}''\right]\subset \mathcal{T}'$, and we have a well-defined bracket pairing
    \begin{equation}\label{eq:bracket1}
    \mathcal{T}''\times \frac{\mathcal{T}'}{\mathcal{T}''}\longrightarrow \frac{\mathcal{T}}{\mathcal{T}'}.    
    \end{equation}
    
    Now, want to prove that this pairing vanishes, and we start by describing the bundles $\mathcal{T}'\big/ \mathcal{T}''$ and $\mathcal{T}\big/ \mathcal{T}'$.  
    We note that, using the identification $\partial^i_{Ars}-b_s\partial^i_{A1r}=b_r\partial^i_{A1s}-b_rb_s\partial^i_{A11}$ recursively, one can easily see that the sections of $\mathcal{T}'\big/ \mathcal{T}''$ have the form  
    \begin{equation}\label{eq:tau'}
    \tau'=\sum_{i=1}^c\tau'_i=\sum_{i=1}^c\left(\sum_{r=2}^n\sum_{\alpha=0}^{d_i-1}e_{r\alpha}^i(\tau')\left(\partial^i_{r1^{\alpha}}-b_r\partial^i_{1^{\alpha+1}}\right)\right), 
    \end{equation}
    where $1^{\alpha}$ means that the index 1 is repeated $\alpha$ times.
    Then we can associate to each section $\tau'$ a $(n-1)\times d$ matrix given by    
    $$
    E(\tau')=\big(E^1\big|\cdots \big|E^c\big)\quad \text{with }E^i\coloneqq \left(e^i_{r\alpha}(\tau')\right)_{\substack{r=2,\dots,c \\ \alpha=0,\dots , d_i}}. 
    $$
    Up to shrinking $U$, we can think of $\mathcal{T}'\big/ \mathcal{T}''$ as a subbundle of the bundle $M_{(n-1)\times d}(\mathcal{O}_U)$ of $\mathcal{O}_U$-valued matrices of dimension $(n-1)\times d$.
    Furthermore, since $\mathcal{T}''\subset \mathcal{L}_2$ and $\rk\left(\mathcal{L}\big/ \mathcal{L}_2\right)=\dim \left( S^{\mathbf{d}}_{\ell_0}\big/ S^{\mathbf{d}}_{2\ell_0}\right)=(n-1)d$ equals the rank of $M_{(n-1)\times d}(\mathcal{O}_U)$, we deduce that the corank of $\mathcal{T}'\big/ \mathcal{T}''$ in $M_{(n-1)\times d}(\mathcal{O}_U)$ is
    \begin{equation}\label{eq:corank1}
    \rk M_{(n-1)\times d}(\mathcal{O}_U)- \rk \frac{\mathcal{T}'}{\mathcal{T}''}\leq \rk \frac{\mathcal{L}}{\mathcal{T}'}=\dim \frac{S^{\mathbf{d}}_{\ell_0}}{T^{\mathrm{vert}}_{\mathcal{Y},(y,\mathbf{F})}\cap S^{\mathbf{d}}_{\ell_0}}.
    \end{equation}
    Thanks to Corollary \ref{cor:s-s'}, we can apply Lemma \ref{lem:retta}(iii), and by the first equality in \eqref{eq:t} we conclude that the corank of $\mathcal{T}'\big/ \mathcal{T}''$ is at most $n-k-c-s_{\mathcal{T}}$.

    On the other hand, using the identification $\partial^i_{Ar}=b_r\partial^i_{A1}$ recursively, we obtain that the sections of $\mathcal{T}\big/ \mathcal{T}'$ have the form  
    $$
    \tau=\sum_{i=1}^c\left(\sum_{\beta=1}^{d_i}e_{\beta}^i(\tau)\partial^i_{1^{\beta}}\right), 
    $$
    which can be associated to a vector $\left(e^1_1(\tau),\ldots,e^1_{d_1}(\tau),\ldots,e^c_1(\tau),\ldots,e^c_{d_c}(\tau)\right)$.
    Hence we may view $\mathcal{T}\big/ \mathcal{T}'$ as a subbundle of $\mathcal{O}_U^{\oplus d}$, and by Lemma \ref{lem:retta}(iii), its corank is
    \begin{equation}\label{eq:corank2}
    \rk \mathcal{O}_U^{\oplus d}- \rk \frac{\mathcal{T}}{\mathcal{T}'}=d-\dim \frac{T^{\mathrm{vert}}_{\mathcal{Y},(y,\mathbf{F})}}{T^{\mathrm{vert}}_{\mathcal{Y},(y,\mathbf{F})}\cap S^{\mathbf{d}}_{\ell_0}}=s_{\mathcal{T}}.
    \end{equation}

    We point out that given $i,j=1,\dots,c$, for any multi-index $B$ with $|B|\leq d_j-2$, and for any $2\leq p,q,r\leq n$, we have that the bracket   
    $$
     \Big[\partial^j_{Bpq}-b_p\partial^j_{B1q}-b_q\partial^j_{B1p}+b_pb_q\partial^j_{B11} \,, \, \partial^i_{r1^{\alpha}}-b_r\partial^i_{1^{\alpha+1}}\Big]
     $$
     equals $-\left(\partial^j_{Bpq}(b_r)-b_p\partial^j_{B1q}(b_r)-b_q\partial^j_{B1p}(b_r)+b_pb_q\partial^j_{B11}(b_r)\right)\partial^i_{1^{\alpha+1}}$ modulo $\mathcal{T}'$.
     Therefore, considering a section $\tau''$ of $\mathcal{T}''$ given by
     \begin{equation*}\label{eq:tau''}
    \tau''=\sum_{j=1}^c\tau''_j=\sum_{j=1}^c \left(\sum_{p,q=2}^n\sum_{|B|\leq d_j-2}e_{Bpq}^j(\tau'')\left(\partial^j_{Bpq}-b_p\partial^j_{B1q}-b_q\partial^j_{B1p}+b_pb_q\partial^j_{B11}\right)\right), 
    \end{equation*}
     and a section $\tau'$ as in $\eqref{eq:tau'}$, the pairing \eqref{eq:bracket1} is
     $$
     \left[\tau'',\tau'\right]=-\sum_{i,j} \sum_{r,\alpha}\tau''_j(b_r)e^i_{r\alpha}(\tau')\partial^i_{1^{\alpha+1}}=-\sum_i \sum_{r,\alpha}\left(\sum_j\tau''_j(b_r)\right)e^i_{r\alpha}(\tau')\partial^i_{1^{\alpha+1}}.
     $$

    \begin{claim}\label{clm:T''}
    For any $2\leq r\leq n$ and for any section $\tau''$ of $\mathcal{T}''$, we have that $\tau''(b_r)=\sum\tau''_j(b_r)=0$.
    \begin{proof}[Proof of Claim \ref{clm:T''}]
        Suppose by contradiction that $\tau''(b_r)\neq 0$ for some $\tau''$ and $r\in \{2,\dots,n\}$, and let us consider the map $\left[\tau'',\,\cdot\,\right]\colon \mathcal{T}'\big/ \mathcal{T}''\longrightarrow \mathcal{T}\big/ \mathcal{T}'$.
        Viewing $\mathcal{T}'\big/ \mathcal{T}''$ and $\mathcal{T}\big/ \mathcal{T}'$ as subbundles of $M_{(n-1)\times d}(\mathcal{O}_U)$ and $\mathcal{O}_U^{\oplus d}$ respectively, $\left[\tau'',\,\cdot\,\right]$ is the restriction of the map 
        $$\Phi\colon M_{(n-1)\times d}(\mathcal{O}_U)\longrightarrow \mathcal{O}_U^{\oplus d}$$
        given by multiplication by the vector $\varphi(\tau'')\coloneqq\left(\tau''(b_2),\dots,\tau''(b_n)\right)$.
        Since $\tau''(b_r)\neq 0$, the map $\Phi$ is surjective, and hence the corank of $\Phi(\mathcal{T}'\big/ \mathcal{T}'')$ in $\mathcal{O}_U^{\oplus d}$ cannot be larger than the corank of $\mathcal{T}'\big/ \mathcal{T}''$ in $M_{(n-1)\times d}(\mathcal{O}_U)$.
        Therefore, using \eqref{eq:corank1} and \eqref{eq:corank2}, we obtain
        \begin{equation}\label{eq:corank3}
        s_{\mathcal{T}} = \rk \mathcal{O}_U^{\oplus d}- \rk \frac{\mathcal{T}}{\mathcal{T}'}\leq \rk \mathcal{O}_U^{\oplus d}-\rk\Phi(\mathcal{T}'\big/ \mathcal{T}'')\leq 
        \rk M_{(n-1)\times d}(\mathcal{O}_U)- \rk\frac{\mathcal{T}'}{\mathcal{T}''}\leq n-k-c-s_{\mathcal{T}},
        \end{equation}
        which contradicts Corollary \ref{cor:s-s'}.
    \end{proof}
    \end{claim}

    In the light of the claim, we deduce that $\left[\tau'',\tau'\right]=0$, i.e. the pairing \eqref{eq:bracket1} vanishes.
    Thus we have a well-defined bracket pairing 
    \begin{equation}\label{eq:bracket2}
    \frac{\mathcal{T}'}{\mathcal{T}''}\times \frac{\mathcal{T}'}{\mathcal{T}''}\longrightarrow \frac{\mathcal{T}}{\mathcal{T}'}.    
    \end{equation}
    Given a section $\sigma'=\sum_{j}\sigma'_j=\sum_{j}\left(\sum_{s}\sum_{\beta}e_{s\beta}^j(\sigma')\left(\partial^j_{s1^{\beta}}-b_s\partial^j_{1^{\beta+1}}\right)\right)$ and a section $\tau'$ as in \eqref{eq:tau''}, the bracket gives
    \begin{equation}\label{eq:bracket3}
    \left[\tau',\sigma'\right]=
    \sum_{i=1}^c\sum_{\alpha=0}^{d_i-1}\sum_{r=2}^n\left( \sum_{j=1}^c\sigma'_j(b_r)\right)e_{r\alpha}^i(\tau')\partial^i_{1^{\alpha+1}} 
    -\sum_{j=1}^c\sum_{\beta=0}^{d_j-1}\sum_{s=2}^n\left( \sum_{i=1}^c\tau'_i(b_s)\right)e_{s\beta}^j(\sigma')\partial^j_{1^{\beta+1}}.
    \end{equation}
    In order to conclude the proof, we need to show that $\sigma'(b_r)=\sum\sigma'_j(b_r)=0$ for any $2\leq r\leq n$.
    
    To this aim, we us consider the map $\varphi\colon \mathcal{T}'\big/ \mathcal{T}''\longrightarrow \mathcal{O}_U^{\oplus n-1}$ such that $\varphi(\tau')\coloneqq\left(\tau'(b_2),\dots,\tau'(b_n)\right)$.
    Thinking of $\mathcal{T}'\big/ \mathcal{T}''$ and $\mathcal{T}\big/ \mathcal{T}'$ as subbundles of $M_{(n-1)\times d}(\mathcal{O}_U)$ and $\mathcal{O}_U^{\oplus d}$ respectively, we point out that
    $$
    \left[\tau',\sigma'\right]=\varphi(\sigma')\cdot\tau'-\varphi(\tau')\cdot\sigma'.
    $$
    Then we recall the following linear algebra result from \cite{CR04}.
    \begin{claim}\label{clm:linear algebra}
        Let $V,W$ be finite-dimensional vector spaces and let $H<\Hom(V,W)$ be a subspace of codimension $\epsilon$.
        Let $\varphi\colon H\longrightarrow V$ be a nonzero linear map and define a pairing
        $$
        I(\varphi)\colon \bigwedge^2 H \longrightarrow W \quad \text{such that} \quad A\wedge B\longmapsto A\varphi(B)-B\varphi(A).
        $$
        Then the image of $I(\beta)$ is of codimension at most $\epsilon+1$ in $W$.
    \begin{proof}[Proof of Claim \ref{clm:linear algebra}]
        See \cite[Sublemma 6.1.1]{CR04}.
    \end{proof}
    \end{claim}
    Let $H$, $V$ e $W$ be the fibers at $(y,\mathbf{F})$ of the bundles $\mathcal{T}'\big/ \mathcal{T}''$, $\mathcal{O}_U^{\oplus n-1}$ and $\mathcal{O}_U^{\oplus d}$, respectively.
    In particular, the codimension of $H$ in $\Hom(V,W)$ is $\epsilon\leq n-k-c-s_{\mathcal{T}}$.
    
    Suppose by contradiction that $\sigma'(b_r)=\sum\sigma'_j(b_r)\neq 0$ for some $2\leq r\leq n$.  
    As $(y,\mathbf{F})\in \mathcal{Y}_0$ is a general point, the restriction of $\varphi\colon \mathcal{T}'\big/ \mathcal{T}''\longrightarrow \mathcal{O}_U^{\oplus n-1}$ to the fibers at $(y,\mathbf{F})$ is nonzero, and the restriction of the bracket at \eqref{eq:bracket2} is the composition of $I(\varphi)$ with the natural map $H\times H\longrightarrow \bigwedge^2 H$. 
    Therefore, Claim \ref{clm:linear algebra} ensures that the image of the pairing \eqref{eq:bracket2} has corank at most $\epsilon+1\leq n-k-c-s_{\mathcal{T}} + 1$ in $\mathcal{O}_U^{\oplus d}$.
    Finally, using \eqref{eq:corank2} and arguing as in \eqref{eq:corank3}, we obtain that $s_{\mathcal{T}}\leq n-k-c-s_{\mathcal{T}} + 1$, which contradicts Corollary \ref{cor:s-s'}.
    In conclusion, $\sigma'(b_r)=0$ for any $2\leq r\leq n$, and the first assertion follows. 
    
    It follows that the bracket \eqref{eq:bracket3} vanishes, i.e. $\left[\mathcal{T}',\mathcal{T}'\right]\subset \mathcal{T}'$, and hence the distribution $\mathcal{T}'$ is integrable by Frobenius theorem. 

    Finally, we note from (\ref{eq:b_r}) that there is a natural identification between the differential of $\phi$ at a point $(y,\mathbf{G})$ and the restriction of $\varphi$ to the fibers at $(y,\mathbf{G})$.
    Therefore, the fact that $\sigma'(b_r)=0$ for any section $\sigma'$ of $\mathcal T'$ means exactly that the line $\ell_{(y,\mathbf{G})}$ does not change along a leaf of the integrable foliation given by $\mathcal T'$. 
\end{proof}
\end{proposition}

\subsection{Special subvarieties are included in contact loci}\label{ss:inclusion}

\begin{proposition}\label{prop:Delta_r}
Assume \eqref{eq:bound lineare} holds.
    Let $\psi\colon \mathcal{Y}  \dashrightarrow \Delta$ be the map in \eqref{eq:psi}.
    Let $(y,\mathbf{F})\in \mathcal{Y}$ be a general point, with $\mathbf{F}=\left(F_1,\dots,F_c\right)$, and let $\widetilde{Y}_\mathbf{F}\longrightarrow Y_\mathbf{F}$ be a desingularization. 
    \begin{itemize}
        \item[\textit{(i)}] If  $a=0$ and the map \eqref{eq:vanish} vanishes, then there exists an integer $r\geq 1$ such that
        $$
    \psi\left(\mathcal{Y}\right)\subset \widetilde{\Delta}_{r}\coloneqq \left\{(x,[\ell],\mathbf{G})\in \Delta\left|\,\ell\cdot V(G_i)\geq rx \text{ if }d_i=r \text{ and } \ell\subset V(G_i)\text{ otherwise}\right.\right\},
    $$
        \item[\textit{(ii)}] If $a=1$ and $h^0(K_{\widetilde Y_\mathbf{F}}(-1))=0$, then there exists an integer $r\geq 1$ such that
        $$
    \psi\left(\mathcal{Y}\right)\subset \widetilde{\Delta}_{r,\mathbf{d}-r}\coloneqq \left\{(x,[\ell],\mathbf{G})\in \Delta\left|\,\exists x'\in \ell \text{ such that }\ell\cdot V(G_i)\geq rx+(d_i-r)x'\ \forall i=1,\dots,c\right.\right\}.
    $$
    \end{itemize}
\begin{proof}
Using notation as above, let $y=[1,0,\dots,0]\in \mathbb{P}^n$ and let $\mathcal{Y}_0
\coloneqq \pi^{-1}(y)\subset \mathcal{Y}$ be the fibre over $y$ of the projection $\mathcal{Y}\stackrel{\pi}{\longrightarrow} \mathbb{P}^n$, that is the multi-polynomials $\mathbf{G}\in S^\d$, for which we have a subvariety $Y_{\mathbf{G}}$ in our family $\mathcal Y$ that passes through the point $y$.
Let $G_y< \GL(n+1)$ be the stabilizer of $y$, and notice that, since $\mathcal{Y}$ is $\GL(n+1)$-invariant, the fiber  $\mathcal{Y}_0$ is invariant under the natural action of $G_y$ on $S^{\mathbf{d}}$.
Furthermore, the restriction $\phi_{|\mathcal{Y}_0}\colon \mathcal{Y}_0 \dashrightarrow \mathbb{G}(1,n)$ of the map defined in \eqref{eq:phi} is $G_y$--equivariant, and hence it is generically surjective on the $(n-1)$-dimensional locus of lines through $y$.

Let us consider the line $\ell\coloneqq \ell_{(y,\mathbf{F})}=\phi(y,\mathbf{F})$, whose equations are assumed to be 
$$
\ell\colon \, X_2=\dots=X_n=0,
$$
and let $\mathcal{Y}_{y,\ell}\coloneqq \phi_{|\mathcal{Y}_0}^{-1}([\ell])\subset \mathcal{Y}_0$ be the fibre of $[\ell]$.
Again, to spell it out explicitly, the fiber $\mathcal{Y}_{y,\ell}$ parametrizes multi-polynomials  $\bf G\in S^\d$, for which we have a subvariety $Y_{\bf G}$ in our family $\mathcal Y$ passing through the point $y$ and for which the associated line is $\ell$.
Since $(y,\mathbf{F})\in \mathcal{Y}$ is a general point, then $\codim_{\mathcal{Y}_0}\mathcal{Y}_{y,\ell}=\dim \phi(\mathcal{Y}_0)=n-1$. 
Moreover, we define the map
$$
\rho\colon \mathcal{Y}_{y,\ell}\longrightarrow \bigoplus_{i=1}^c H^0\left(\ell,\mathcal{O}_{\ell}(d_i)(-y)\right) \quad \text{such that} \quad (y,\mathbf{G})\longmapsto \mathbf{G}_{|\ell}.
$$
It follows from Proposition \ref{prop:b_r} that $\mathcal{T}'=T_{\mathcal{Y}_0}\cap \mathcal{L}$ is the vertical tangent space of the map $\rho$, i.e. its restriction fits in the exact sequence
$$
0\longrightarrow \mathcal{T}'_{|\mathcal{Y}_{y,\ell}}\longrightarrow T_{\mathcal{Y}_{y,\ell}}\stackrel{\rho_*}{\longrightarrow} \bigoplus_{i=1}^c H^0\left(\ell,\mathcal{O}_{\ell}(d_i)(-y)\right)\longrightarrow 0.
$$
This fact and Lemma \ref{lem:retta}(iii) give that
\begin{align*}
\dim \rho(\mathcal{Y}_{y,\ell})& =\dim \mathcal{Y}_{y,\ell} - \rk \mathcal{T}' =\dim  \mathcal{Y}_0 - \codim_{\mathcal{Y}_0}\mathcal{Y}_{y,\ell} - \rk (T_{\mathcal{Y}_0}\cap \mathcal{L})\\
& = \rk \big(T_{\mathcal{Y}_0}/(T_{\mathcal{Y}_0}\cap \mathcal{L})\big)- \codim_{\mathcal{Y}_0}\mathcal{Y}_{y,\ell}= d-s_{\mathcal{T}}-n+1.
\end{align*}
Furthermore, by Lemma \ref{lem:neq 0} and Remark \ref{rmk:mumford}, if $h^0(\widetilde{Y}_\mathbf{F},K_{\widetilde{Y}_\mathbf{F}}(-a))=0$ with $a\in \{0,1\}$, or if all the points of $Y_\F$ are rationally equivalent in $X$ and $a=0$, then $d-s_{\mathcal{T}}-n+1\leq 1+a$.
Therefore, Lemma \ref{lem:jacG1} yields
\begin{equation}\label{eq:im Z}
    \dim \big\langle \,\mathbf{F}_{|\ell}, \left.X_1\frac{\partial \mathbf{F}}{\partial X_0}\right|_{\ell}, \left.X_1\frac{\partial \mathbf{F}}{\partial X_1}\right|_{\ell}\big\rangle \leq \dim \rho(\mathcal{Y}_{y,\ell})\leq 1+a.
\end{equation}

We point out that $\rho$ is equivariant under the action of the stabilizer $G_{y,\ell}< \GL(n+1)$ of the pair $(y,\ell)$.
Hence the image $\rho(\mathcal{Y}_{y,\ell})$ is invariant under the action of the stabilizer $H_y<\GL(\ell)$ of $y$.
We note further that $H_y$ acts doubly-transitively on $\ell\smallsetminus\{y\}$.  

\smallskip
\textit{(i)} If $a=0$, then $\rho(\mathcal{Z})=\langle\mathbf{F}_{|\ell}\rangle$ by \eqref{eq:im Z}. 
Moreover, the transitivity of the action of $H$ implies that none of the polynomials $F_{i|\ell}$ vanishes outside $y$, i.e. $\mathbf{F}_{|\ell}=\left(\alpha_1X_1^{d_1},\dots,\alpha_cX_1^{d_c}\right)$ for some $\alpha_1,\dots,\alpha_c\in \mathbb{C}$.
Finally, since $\left.X_1\frac{\partial \mathbf{F}}{\partial X_1}\right|_{\ell}=\left(d_1\alpha_1X_1^{d_1},\dots,d_c\alpha_cX_1^{d_c}\right)\in \langle\mathbf{F}_{|\ell}\rangle$, we conclude that there exists an integer $r\geq 1$ such that $\left.X_1\frac{\partial \mathbf{F}}{\partial X_1}\right|_{\ell}= r\mathbf{F}_{|\ell}$ and $\alpha_i=0$ for any $1\leq i\leq c$ with $d_i\neq r$.
Thus $\psi\left(y,\mathbf{F}\right)=\left(y,[\ell],\mathbf{F}\right)\in \widetilde{\Delta}_{r}$, as claimed.

\smallskip
\textit{(ii)} If $a=1$, then $\dim \big\langle \,\mathbf{F}_{|\ell}, \left.X_1\frac{\partial \mathbf{F}}{\partial X_0}\right|_{\ell}, \left.X_1\frac{\partial \mathbf{F}}{\partial X_1}\right|_{\ell}\big\rangle\leq 2$ by \eqref{eq:im Z}.
We point out that, if $F_{i|\ell}$ were a non-null polynomial vanishing at two distinct points of $\ell\smallsetminus\{y\}$ for some $1\leq i\leq c$, then the orbit of $\mathbf{F}_{|\ell}$ under the doubly-transitive action of $H_y$ would describe a subspace of dimension at least 4.
Thus, if $F_{i|\ell}\neq 0$, there exists at most one point $y'\in \ell\smallsetminus\{y\}$ such that $\mathbf{F}_{i|\ell}(y')=0$.
Furthermore, the very same argument shows that the point $y'$ is the same for any $i=1,\dots, c$ such that $\mathbf{F}_{i|\ell}\neq 0$.
Hence there exists a linear polynomial $L=X_0-bX_1$ vanishing at $y'=[b,1,0,\dots,0]$ such that $$\mathbf{F}_{|\ell}=\left(\alpha_1X_1^{r_1}L^{d_1-r_1},\dots,\alpha_cX_1^{r_c}L^{d_c-r_c}\right),$$
for some (possibly null) $\alpha_1,\dots,\alpha_c\in \mathbb{C}$.
To conclude, we note that $\left.X_1\frac{\partial F_i}{\partial X_1}\right|_{\ell}=r_iF_{i|\ell}-\left.bX_1\frac{\partial F_i}{\partial X_0}\right|_{\ell}$ for any $i=1,\dots,c$.
Since $\mathbf{F}_{|\ell}$, $\left.X_1\frac{\partial \mathbf{F}}{\partial X_0}\right|_{\ell}$ and $ \left.X_1\frac{\partial \mathbf{F}}{\partial X_1}\right|_{\ell}$ are linearly dependent, we conclude that $r_i$ does not depend on $i$, i.e. there exists an integer $r\geq 1$ such that $F_{i|\ell}=\alpha_iX_1^{r}L^{d_i-r}$ for any $i=1,\dots, c$ (where $\alpha_i=0$ when $r>d_i$).
Therefore, we conclude that $\psi\left(y,\mathbf{F}\right)=\left(y,[\ell],\mathbf{F}\right)\in \widetilde{\Delta}_{r,\d-r}$.
\end{proof}
\end{proposition}
\begin{proof}[Proof of Theorem \ref{thm:CLS+}]
The result follows immediately from Proposition \ref{prop:Delta_r}, item (i), and Remark \ref{rmk:mumford} by projecting $\widetilde{\Delta}_{r,\F}$ to $\mathbb P^n$.   
\end{proof}

\section{Proof of Theorem \ref{thm:main}}\label{s:proof Thm E}

In this section, we are aimed at proving Theorem \ref{thm:main}.
We will need the following folklore result to draw some conclusions on subvarieties of the contact and bi-contact loci described before. 

\begin{lemma}\label{lem:folk}
    Let $V$ be a smooth projective variety. 
\begin{enumerate}
    \item If $h^0(V,K_V)>0$, then any subvariety $W\subset V$ passing through a very general point of $V$ has $p_g(W)>0$. 
    \item If $H$ is a big and nef line bundle on $V$ such that $h^0(V,K_V-H)>0$, then 
     any subvariety $W$ passing through the very general point of $V$ verifies $h^0(\widetilde W, K_{\widetilde W} -\nu^*H)>0$, where $\nu\colon \widetilde W\longrightarrow W$ is a desingularization.
\end{enumerate}
\begin{proof}
    (1) We may assume that we have a flat family 
    $\mathcal W\longrightarrow T$, with a dominant map     
    $\mathcal W\longrightarrow T$. Notice that by adjunction the canonical divisor $K_{W_t}$ of the generic fiber $W_t$ equals $(K_{\mathcal W})_{|W_t}$. Hence the pull-back of a non-zero global section of $K_V$ yields a non-zero global section of $K_{W_t}$ and we are done.  

    (2) The proof is essentially the same. Since $h^0(V,K_V-H)>0$, by pulling-back a non zero-section in  $H^0(V,K_V-H)$ to $\widetilde W_t$, we obtain  the desired   non zero-section in  $H^0({\widetilde W_t},K_{\widetilde W_t}-\nu^*H)$. 
\end{proof}
\end{lemma}

In order to achieve Theorem \ref{thm:main}, 
we follow the argument of \cite[Theorem 2.3]{RY20} and \cite[Theorem 5.4]{CoR22}.
To this aim, we recall that given two integers $0<s\leq M$, a \emph{parameterized $s$-plane} in $\mathbb{P}^M$ is a linear map $\Lambda\in\mathrm{Hom}(\mathbb{P}^s,\mathbb{P}^M)$, which embeds $\mathbb{P}^s$ as a $s$-plane $\Lambda(\mathbb{P}^s)$ of $\mathbb{P}^M$. Moreover, given a line $\ell\subset \mathbb{P}^M$, we denote by $G_{\ell}(s,M)$ the space of parameterized $s$-planes $\Lambda$ in $\mathbb{P}^M$ such that $\ell\subset \Lambda(\mathbb{P}^s)$.
Then the following holds (cf. \cite[Corollary 2.2]{RY22} and \cite[proof of Theorem 5.4]{CoR22}).

\begin{proposition}\label{prop:RY}
    Consider a line $\ell\subset \mathbb{P}^M$.
    Let $C\subset G_{\ell}(s-1,M)$ be a non-empty subvariety of codimension $\epsilon\geq 1$, and let $B\subset G_{\ell}(s,M)$ be the subvariety of parameterized $s$-planes containing some parameterized $(s-1)$-plane of $C$. 
    Then 
    $$\codim_{G_{\ell}(s,M)}B\leq \epsilon -1.$$
\end{proposition}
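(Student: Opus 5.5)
The plan is to compare $C$ and $B$ through an incidence correspondence. I read ``$\Lambda$ contains $\Lambda'$'' as $\Lambda'=\Lambda\circ\iota$ for some linear embedding $\iota\colon\mathbb P^{s-1}\to\mathbb P^s$. The key step is a dimension count over this correspondence, and the only non-formal point is to show that the fibres of the correspondence over $B$ are not full.

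Replacing $C$ by an irreducible component of maximal dimension, I may assume $C$ is irreducible: $B$ only gets smaller and the codimension is unchanged. Set
$$
P:=\{(\Lambda,\iota)\mid \Lambda\in G_\ell(s,M),\ \iota\colon\mathbb P^{s-1}\hookrightarrow\mathbb P^s \text{ linear},\ \Lambda^{-1}(\ell)\subset\iota(\mathbb P^{s-1})\}.
$$
It comes with two maps:
\begin{itemize}
\item the projection $p\colon P\to G_\ell(s,M)$, whose fibres all have the same dimension $f$, namely the dimension of the space of parameterized hyperplanes of $\mathbb P^s$ containing a fixed line;
\item the composition map $\Phi\colon P\to G_\ell(s-1,M)$, $(\Lambda,\iota)\mapsto\Lambda\circ\iota$.
\end{itemize}
The group $\GL(M+1)_\ell\times\GL(s+1)$ acts compatibly on $P$, and the target of $\Phi$ is homogeneous under $\GL(M+1)_\ell$. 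Hence $\Phi$ is surjective with equidimensional fibres (indeed it is a fibration), so $\codim_P\Phi^{-1}(C)=\epsilon$. By definition $B=p(\Phi^{-1}(C))$. Let $g$ be the dimension of the general fibre of $p_{|\Phi^{-1}(C)}$ over $B$. Then
$$
\dim B=\dim G_\ell(s,M)+f-\epsilon-g,
\qquad\text{so}\qquad
\codim B=\epsilon-(f-g).
$$
It therefore suffices to prove $g<f$.

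Suppose instead $g=f$. Then for a general $\Lambda\in B$ every $\iota$ (compatible with $\ell$) satisfies $\Lambda\circ\iota\in C$; since $C$ is closed and $B$ is irreducible, this holds for every $\Lambda\in B$. Now start from any $\Lambda'\in C$. Every extension $\Lambda$ of $\Lambda'$ lies in $B$, so $C$ contains every $\Lambda\circ\iota$. These are exactly the parameterized $(s-1)$-planes, containing $\ell$ with a compatible parameterization, whose image lies in some $s$-plane through $\Lambda'(\mathbb P^{s-1})$. This set includes all reparameterizations of $\Lambda'$ and all $(s-1)$-planes meeting $\Lambda'(\mathbb P^{s-1})$ in an $(s-2)$-plane containing $\ell$. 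Iterating this move, any two $(s-1)$-planes containing $\ell$ are joined by a finite chain in which consecutive planes meet in codimension one and contain $\ell$. When $s-1=1$ the plane is $\ell$ itself and only the reparameterizations matter. Hence $C=G_\ell(s-1,M)$, contradicting $\epsilon\geq 1$. So $g\leq f-1$ and $\codim B\leq\epsilon-1$.

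The main obstacle is making the connectivity step rigorous while keeping track of the parameterizations. Specifically, one must check that the pairs $(\Lambda,\iota)$ compatible with $\ell$ really produce every parameterization of the new plane compatible with $\ell$. This amounts to the transitivity of the relevant stabilizer subgroup of $\GL(s+1)$, and I would verify it directly with block matrices. One should also fix at the outset the precise convention for how a parameterized plane ``contains'' $\ell$, namely whether $\Lambda^{-1}(\ell)$ is a fixed coordinate line or is allowed to vary, so that $P$ is correctly defined and $p$ has constant fibre dimension $f$.
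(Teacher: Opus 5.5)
Your argument is correct. It is essentially the Riedl--Yang incidence-correspondence count, together with the step ``if every compatible hyperplane restriction lay in $C$, then $C$ would be everything''. The paper does not reprove this but imports it from \cite[Corollary 2.2]{RY22} and \cite[proof of Theorem 5.4]{CoR22}.

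Two small points should be tidied.

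First, with $(\Lambda')^{-1}(\ell)$ allowed to vary, $G_\ell(s-1,M)$ is not homogeneous under $\GL(M+1)_\ell$ alone. That is the convention the paper uses, via $\ell_\Lambda=\Lambda^{-1}(\ell_0)$. The line $(\Lambda')^{-1}(\ell)$ is invariant under post-composition, so you must also let $\GL(s)$ act by reparameterizing the source; $\Phi$ is equivariant for this action too.

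Second, the ``obstacle'' you flag is immediate. Any parameterized $(s-1)$-plane $\Lambda''$ containing $\ell$ with image inside $\Lambda(\mathbb{P}^s)$ equals $\Lambda\circ\iota$ for $\iota=\Lambda^{-1}\circ\Lambda''$. This $\iota$ is automatically compatible with $\ell$, so no transitivity argument is needed.
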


We can finally prove Theorem \ref{thm:main}.

\begin{proof}[Proof of Theorem \ref{thm:main}]
Let $X\subset \mathbb{P}^n$ be a very general complete intersection of type $\mathbf{d}=(d_1,\dots,d_c)$ satisfying \eqref{eq:bound lineare} and \eqref{eq:bound quadratico}, and let $\mathbf{F}=(F_1,\dots,F_c)\in S^{\mathbf{d}}$ such that $X=X_{\mathbf{F}}=V(F_1,\dots,F_c)$.

\smallskip
\textbf{Case $a=0$.} To start, we discuss case $a=0$, and we consider a $k$-dimensional subvariety $Y=Y_{\mathbf{F}}\subset X$ such that $h^0(\widetilde Y,K_{\widetilde Y})=0$, where $\widetilde{Y}\longrightarrow Y$ is any desingularization of $Y$.
Let $\psi\colon \mathcal{Y}  \dashrightarrow \Delta$ be the map defined in \eqref{eq:psi}. 
Thanks to Proposition \ref{prop:Delta_r}, there exists a positive integer $r$ such that
$$
\psi(Y\times \{\mathbf{F}\})\subset \widetilde{\Delta}_{r, \mathbf{F}}\coloneqq \left\{(x,[\ell],\mathbf{F})\in \Delta\left|\,\ell\cdot V(F_i)\geq rx \text{ if }d_i=r \text{ and } \ell\subset V(F_i)\text{ otherwise}\right.\right\}.
$$
We note that if $r\not\in \{d_1,\dots ,d_c\}$, then any triple $(x,[\ell],\mathbf{F})\in \widetilde{\Delta}_{r, \mathbf{F}}$ is such that $\ell\subset X$, and the assertion follows because $Y\subset X$ is contained in the locus covered by the lines of $X$.
So we assume hereafter that $r\in \{d_1,\dots ,d_c\}$, that is 
$$\alpha\coloneqq \#\{i\,|\,d_i=r\}\geq 1.$$
We recall from \eqref{eq:dim Delta_r} that
\begin{equation}\label{eq:t}
t\coloneqq \dim \widetilde{\Delta}_{r, \mathbf{F}}= 
2n-1-d-c+\alpha,
\end{equation}
and from Lemma \ref{lem:cancontact} that
\begin{equation}\label{eq:omega Sigma_r}
K_{\widetilde{\Delta}_{r,\mathbf{F}}}= (\alpha r -2)H+\left( \sum_{i=1}^c \frac{d_i(d_i+1)}{2} - \alpha r -n\right) L,
\end{equation}
where $\alpha r-2\geq 0$, as $\alpha\geq 1$ and $r=d_i\geq 2$ for some $i=1,\dots,c$.
Setting 
\begin{equation}\label{eq:m}
m\coloneqq \sum_{i=1}^c \frac{d_i(d_i+1)}{2} - \alpha r,
\end{equation}
we note that 
\begin{equation}\label{eq:m-n}
    m-n-t+k= 
\sum_{i=1}^c \frac{d_i(d_i+1)}{2} -\alpha r -3n  +d +c -\alpha +k +1 \geq 0
\end{equation}
as $d-\alpha r \geq 0$, $c-\alpha\geq 0$ and \eqref{eq:bound quadratico} holds with $a=0$. 

\smallskip
For any $i=1,\dots,c$, let $\mathbb{H}^{d_i}$ be the Hilbert scheme parameterizing hypersurfaces of degree $d_i$ in $\mathbb{P}^{n}$.
Given $\mathbf{G}=(G_1,\dots,G_c)\in S^{\mathbf{d}}$, we set $[\mathbf{G}]\coloneqq\left([V(G_1)],\dots,[V(G_c)]\right)\in \mathbb{H}^{d_1}\times \dots \times \mathbb{H}^{d_c}$, and we consider the open subset $\mathbb{H}^{\mathbf{d}}_n\subset \mathbb{H}^{d_1}\times \dots \times \mathbb{H}^{d_c}$ parameterizing $c$-tuples $[\mathbf{G}]$ such that $V(G_1,\dots,G_c)\in \mathbb{P}^n$ is a smooth complete intersection.
Then we define 
$$\mathcal{U}_n^{\mathbf{d}}\coloneqq\left\{\left.\left([\ell],[\mathbf{G}]\right)\in \mathbb{G}(1,n)\times \mathbb{H}^{\mathbf{d}}_n\right|\exists x\in \ell \text{ such that }(x,[\ell],\mathbf{G})\in \widetilde{\Delta}_{r,\mathbf{G}}\right\},$$
and we consider the sublocus 
$$
\mathcal{R}_n^{\mathbf{d}}\coloneqq\left\{([\ell],[\mathbf{G}])\in \mathcal{U}_n^{\mathbf{d}}\left|\begin{array}{l}\exists x\in \ell \text{ and } \exists W\subset \widetilde{\Delta}_{r,\mathbf{G}} \text{ with desingularization } \widetilde{W}\longrightarrow W \\ 
 \text{such that } (x,[\ell],\mathbf{G}) \in W,\ \dim W=k,  \text{ and } p_g({\widetilde W})=0    \end{array} \right.\right\}.
$$
Thanks to \eqref{eq:m-n}, we have that $m-n\geq 0$, and hence $K_{\widetilde{\Delta}_{r, \mathbf{F}}}$ is effective. 
Therefore, Lemma \ref{lem:folk} ensures that no $k$-dimensional subvariety $W\subset \widetilde{\Delta}_{r, \mathbf{F}}$ passing through a very general point admits a desingularization $\widetilde{W}\longrightarrow W$ such that $p_g({\widetilde W})=0$.
Using this fact, we deduce by standard arguments that $\mathcal{R}_n^{\mathbf{d}}$ is the union of (at most) countably many locally closed irreducible subvarieties of $\mathcal{U}_n^{\mathbf{d}}$ (see e.g. \cite[p.\, 642]{BCFS23}). 
Actually, using \eqref{eq:omega Sigma_r}, we see that the same is true for any $\mathcal{R}_s^{\mathbf{d}}$ with $s\leq m$.

\smallskip

Let $y\in Y$ be a general point and let $\psi(y,\mathbf{F})\coloneqq (y, [\ell_0], \mathbf{F})$.
We assume by contradiction that $y$ lies outside the locus of $X$ covered by lines.
Since $\psi(Y\times \{\mathbf{F}\})\subset \widetilde{\Delta}_{r, \mathbf{F}}$ is birational to $Y$, then  $\psi(Y\times \{\mathbf{F}\})$ is a $k$-dimensional subvariety of $\widetilde{\Delta}_{r, \mathbf{F}}$ which passes through $(y, [\ell_0], \mathbf{F})$ and admits a desingularization with vanishing geometric genus.
Hence $([\ell_0],[\mathbf{F}])\in \mathcal{R}_n^{\mathbf{d}}$, and the fibre over $[\mathbf{F}]$ of the projection $\pi_2\colon \mathcal{R}_n^{\mathbf{d}}\longrightarrow \mathbb{H}_n^{\mathbf{d}}$ has dimension $\dim \pi_2^{-1}(\mathbf{F})\geq k$.
Therefore, 
\begin{equation}\label{eq:RY1}
    \codim_{\mathcal{U}_n^{\mathbf{d}}} \mathcal{R}_n^{\mathbf{d}}=\left(\dim \mathbb{H}_n^{\mathbf{d}}+ t\right)-\left(\dim \mathbb{H}_n^{\mathbf{d}}+ \dim \varphi_2^{-1}([\mathbf{F}])\right)\leq t-k.
\end{equation}

Let us now consider a very general point $([\ell'],[\mathbf{F}'])\in \mathcal{U}_m^{\mathbf{d}}$ and let $(x',[\ell'],[\mathbf{F}'])\in \widetilde{\Delta}_{r, \mathbf{F}'}$.
Since $K_{\widetilde{\Delta}_{r, \mathbf{F}'}}$ is effective, there are not $k$-dimensional subvarieties of $\widetilde{\Delta}_{r,\mathbf{F}'}$ which pass through $(x',[\ell'],[\mathbf{F}'])$ and admit a desingularization with vanishing geometric genus.
Thus $([\ell'],\mathbf{F}')\in \mathcal{U}_m^{\mathbf{d}}\smallsetminus \mathcal{R}_m^{\mathbf{d}}$.

Arguing as in \cite[Theorem 2.3]{RY22}, we may fix an integer $M\gg m>n$ and a pair $([\ell''],[\mathbf{F}''])\in \mathcal{U}_M^{\mathbf{d}}$ such that $X=X_{\mathbf{F}}\subset \mathbb{P}^n$ and $X'=X_{\mathbf{F}'}\subset \mathbb{P}^m$ are linear sections of $X'=X_{\mathbf{F}'}\subset \mathbb{P}^M$ by a $n$-plane $H$ and a $m$-plane $H'$ respectively, with $\ell_0=\ell'=\ell''$. 

Now, for any $s\geq n$, let $\mathcal{Z}_s \subset G_{\ell_0}(s,M)$ be the set of parameterized $s$-planes containing the line $\ell_0$, so that an element $\Lambda\colon \mathbb{P}^s \longrightarrow \mathbb{P}^M$ of $\mathcal{Z}_s$ embeds $\mathbb{P}^{s}$ as a $s$-plane $\Lambda(\mathbb{P}^{s})$ containing $\ell_0$.
Moreover, let us fix homogeneous coordinates $[y_0,\dots,y_n]$ in $\mathbb{P}^{s}$ and, given a parameterized plane $\Lambda\colon \mathbb{P}^s \longrightarrow \mathbb{P}^M$, let us consider the pair $([\ell_{\Lambda}],[\mathbf{F}''_{\Lambda}])\in \mathcal{U}_s^{\mathbf{d}}$, where 
$$\ell_\Lambda\coloneqq\Lambda^{-1}(\ell_0) \quad \text{and} \quad [\mathbf{F}''_{\Lambda}]\coloneqq\left(\left[\Lambda^{-1}\left(V(F''_1)\cap \Lambda(\mathbb{P}^{s})\right)\right],\dots,\left[\Lambda^{-1}\left(V(F''_c)\cap \Lambda(\mathbb{P}^{s})\right)\right]\right),$$
i.e. $[\mathbf{F}''_{\Lambda}]$ describes the section $X''\cap \Lambda(\mathbb{P}^{s})$ as a complete intersection of type $\mathbf{d}$ in $\mathbb{P}^{s}$.  
Then we may define a morphism $\phi_s\colon \mathcal{Z}_s\longrightarrow \mathcal{U}_s^{\mathbf{d}}$ sending $\Lambda$ to $([\ell_{\Lambda}],[\mathbf{F}''_{\Lambda}])$. 
Let $\mathcal{F}_s$ be its image, and let $\mathcal{Z}'_s\coloneqq \phi_s^{-1}(\mathcal{F}_s\cap \mathcal{R}_s^{\mathbf{d}})\subset \mathcal{Z}_s$ be the subset of parameterized $s$-planes $\Lambda\subset \mathbb{P}^M$ such that $([\ell_{\Lambda}],[\mathbf{F}''_{\Lambda}])\in \mathcal{R}_s^{\mathbf{d}}$.

We point out that 
\begin{equation}\label{eq:Z'_s}
\codim_{\mathcal{U}_s^{\mathbf{d}}} \mathcal{R}_s^{\mathbf{d}}\geq \codim_{\mathcal{F}_s} \left(\mathcal{F}_s\cap \mathcal{R}_s^{\mathbf{d}}\right)\geq \codim_{\mathcal{Z}_s} \mathcal{Z}'_s.
\end{equation}
We claim that for any $0\leq q\leq m$, we have that $\codim_{\mathcal{Z}_{m-q}} \mathcal{Z}'_{m-q}\geq q+1$.
When $q=0$, we recall that $([\ell'],\mathbf{F}')\in \mathcal{U}_m^{\mathbf{d}}\smallsetminus \mathcal{R}_m^{\mathbf{d}}$, so that $\phi_m^{-1}([\ell'],\mathbf{F}')\in \mathcal{Z}_{m}\smallsetminus \mathcal{Z}'_{m}$ and $\codim_{\mathcal{Z}_{m}} \mathcal{Z}'_{m}\geq 1$.
By induction on $q\geq 0$, suppose that $\codim_{\mathcal{Z}_{m-q+1}} \mathcal{Z}'_{m-q+1}\geq q$.
We use Proposition \ref{prop:RY} with $C\coloneqq \mathcal{Z}'_{m-q}$. 
Accordingly, let $B\subset G_{\ell_0}(m-q+1,M)$ be the subvariety of parameterized $(m-q+1)$-planes containing some $[\Lambda]\in C$, and notice that $B\subset\mathcal{Z}'_{m-q+1}$.
Thus
$$
    q\leq \codim_{\mathcal{Z}_{m-q+1}} \mathcal{Z}'_{m-q+1} \leq \codim_{\mathcal{Z}_{m-q+1}} B \leq \codim_{\mathcal{Z}_{m-q}} \mathcal{Z}'_{m-q} -1,
$$
which proves the claim.

Therefore, setting $q=m-n$, and using inequalities \eqref{eq:m-n} and \eqref{eq:Z'_s}, we obtain
$$
\codim_{\mathcal{U}_n^{\mathbf{d}}} \mathcal{R}_n^{\mathbf{d}}\geq \codim_{\mathcal{Z}_n} \mathcal{Z}'_n\geq m-n+1\geq t-k+1,
$$
which contradicts \eqref{eq:RY1}.

Thus, the general point $y\in Y$ must be contained in the locus of $X$ covered by the lines contained in $X$, which proves assertion for $a=0$.

\smallskip
\textbf{Case $a=1$.} The proof in this  case is analogous.
We summarize the main steps of the argument.
Let  $Y=Y_{\mathbf{F}}\subset X\subset \mathbb{P}^n$ be a $k$-dimensional subvariety such that $h^0(\widetilde Y, K_{\widetilde Y}-\nu^*H)=0$, where $\nu\colon\widetilde{Y}\longrightarrow Y$ is any desingularization, and $H\coloneqq \mathcal O_{X}(1)$ is the hyperplane bundle.
Thanks to Proposition \ref{prop:Delta_r}, there exists a positive integer $r$ such that
$$
\psi(Y\times \{\mathbf{F}\})\subset {\widetilde \Delta}_{r,\mathbf{d}-r, \mathbf{F}}\coloneqq \left\{(x,[\ell],\mathbf{F})\left|\begin{array}{c} \exists x'\in \ell \text{ such that } \forall i=1,\dots,c \\ \ell\cdot V(F_i)\geq rx+(d_i-r)x' \end{array}\right.\right\}.
$$
If $r>d_c$, then any triple $(x,[\ell],\mathbf{F})\in {\widetilde\Delta}_{r,\mathbf{d}-r, \mathbf{F}}$ is such that $\ell\subset X$, and the assertion follows because $Y\subset X$ is contained in the locus covered by the lines of $X$.
So we assume hereafter that $1\leq r\leq d_c$, and we set 
$$\gamma\coloneqq \#\{i\,|\,d_i=r\} \quad\text{and}\quad \delta\coloneqq \#\{i\,|\,d_i> r\}, \quad\text{with}\quad \beta\coloneqq\gamma+\delta\geq 1.$$

Let $y\in Y$ be a general point and let $\psi(y,\mathbf{F})\coloneqq (y, [\ell_0], \mathbf{F})$.
We assume by contradiction that $y$ lies outside the locus of $X$ covered by lines.
Moreover, 
we distinguish three cases:
$$
\text{(a) }\beta r \geq 3 \ \text{ and }\ \sum_{d_i\geq r}(d_i-r)\geq 2, \quad \text{(b) }\sum_{d_i\geq r}(d_i-r)\leq 1, \quad \text{(c) }\beta r \leq 2.
$$

\smallskip
(a) Consider the locus ${\widetilde\Lambda}_{r, \mathbf{G}}$ defined in \eqref{eq:Lambda1}, i.e.
$$
{\widetilde\Lambda}_{r, \mathbf{G}}=\overline{\left\{(x,x')\in \mathrm{Bl}_{\textrm{Diag}}(\mathbb{P}^n\times \mathbb{P}^n)\left|\begin{array}{c}\forall i=1,\dots,c \text{ the line }\ell\coloneqq \langle x,x'\rangle \text{ satisfies }\\ \ell\cdot V(G_i)\geq rx+(d_i-r)x' \end{array} \right.\right\}}.
$$
Now, we define 
$$
\mathcal{U}_n^{\mathbf{d}}\coloneqq\left\{\left.\left([\ell],[\mathbf{G}]\right)\in \mathbb{G}(1,n)\times \mathbb{H}^{\mathbf{d}}_n\right|\exists x,x'\in \ell \text{ such that }(x,x')\in {\Lambda}_{r,\mathbf{G}}\right\},
$$
and we consider the sublocus 
$$
\mathcal{R}_n^{\mathbf{d}}\coloneqq\left\{([\ell],[\mathbf{G}])\in \mathcal{U}_n^{\mathbf{d}}\left|\begin{array}{l}\exists W\subset {\widetilde\Lambda}_{r,\mathbf{G}} \text{ such that } (x,x') \in W \text{ and } h^0(\widetilde W, K_{\widetilde W}-\mu^*H_1)=0,\\
\text{where }\mu\colon\widetilde{W}\longrightarrow W \text{ is a desingularization}\end{array}\right.\right\},
$$
where $H_1\coloneqq \pi_1^* O_{X}(1)$ is the pullback of the hyperplane bundle under the natural map $\pi_1\colon \mathrm{Bl}_{\textrm{Diag}}(\mathbb{P}^n\times \mathbb{P}^n)\longrightarrow \mathbb{P}^n$ induced by the first projection.

Since $y$ lies outside the locus of $X$ covered by lines, we have a rational map $p\colon Y\dashrightarrow {\Lambda}_{r, \mathbf{F}}$ which sends a general point $x\in Y\subset X_{\mathbf{F}}$ to the unique point $(x,x')$ such that $\psi(x, \mathbf{F})=(x,[\ell],\mathbf{F})$ and $\ell\cdot V(F_i)\geq rx+(d_i-r)x'$ for any $i=1,\dots,c$.
In particular, considering the Zariski closure $Z\coloneqq \overline{p(Y)}$, we have that $p\colon Y\dashrightarrow Z$ is a birational map, whose inverse is the restriction to $Z$ of the morphism $\pi_1\colon \mathrm{Bl}_{\textrm{Diag}}(\mathbb{P}^n\times \mathbb{P}^n)\longrightarrow \mathbb{P}^n$.

We point out that by construction $([\ell_0],[\mathbf{F}])\in \mathcal{R}_n^{\mathbf{d}}$.
Indeed, denoting by $y'\in \ell_0$ the point such that $\ell_0\cdot V(F_i)\geq ry+(d_i-r)y'$ for any $i=1,\dots,c$, we have that $(y,y')\in Z\subset \widetilde\Lambda_{r,\mathbf{F}}$,
and given a desingularization $\mu\colon \widetilde{Z}\longrightarrow Z$, the composition $(\mu\circ \pi_{1|Z})\colon \widetilde{Z} \longrightarrow Y$ is a desingularization of $Y$, so that $h^0(\widetilde Z, K_{\widetilde Z}-\mu^*H_1)=h^0(\widetilde Z, K_{\widetilde Z}-(\mu\circ \pi_{1|Z})^*H)=0$.

We set
$$
t\coloneqq \dim {\widetilde\Lambda}_{r, \mathbf{F}}= 
2n-d-(c-\beta) \quad\text{and}\quad m\coloneqq 1+ \sum_{i=1}^c\binom{d_i+1}{2} - \sum_{d_i\geq r}d_i.
$$
Therefore, we deduce from \eqref{eq:bound quadratico} that
\begin{equation}\label{eq:m-n 2}
    m-n-t+k= 
1+\sum_{i=1}^c \frac{d_i(d_i+1)}{2} - \sum_{d_i\geq r}d_i -3n  +d +c -\beta +k \geq 0
\end{equation}
and, in particular, $m-n\geq 0$.
Thus it follows from assumption (a) and Lemma \ref{lem:can-bicontact} that $K_{{\widetilde\Lambda}_{r, \mathbf{F}}}-H_1$ is effective. 
Then Lemma \ref{lem:folk} ensures that there are not $k$-dimensional subvarieties $W\subset {\widetilde\Lambda}_{r, \mathbf{F}}$ passing through a very general point of ${\widetilde\Lambda}_{r, \mathbf{F}}$ and satisfying $h^0(\widetilde W, K_{\widetilde W}-\nu^*H_1)=0$.

By arguing as above, we note that $\mathcal{R}_s^{\mathbf{d}}$ is the union of (at most) countably many locally closed irreducible subvarieties of $\mathcal{U}_s^{\mathbf{d}}$ for any $s\leq m$. Therefore, by following the very same argument as case $a=0$, we obtain a contradiction.

\smallskip
(b) We point out that $\sum_{d_i\geq r}(d_i-r)\leq 1$ if and only if $d_1,\dots,d_{c-1}\leq r$ and $d_c\in\left\{r,r+1\right\}$.

\smallskip
Suppose that $d_c=r+1$. 
In the universal line $\Gamma\subset \mathbb{P}^n\times\mathbb{G}(1,n)$, we consider the locus ${\widetilde\Lambda}_{r, \mathbf{G}}$ defined in \eqref{eq:Lambda2}, i.e.
$$
{\widetilde\Lambda}_{r, \mathbf{G}}=\left\{\left.(x,[\ell])\in \Gamma\right|\ell\cdot V(G_i)\geq rx\ \forall i=1,\dots,c\right\}.
$$
Moreover, we define
$$
\mathcal{U}_n^{\mathbf{d}}\coloneqq\left\{\left.\left([\ell],[\mathbf{G}]\right)\in \mathbb{G}(1,n)\times \mathbb{H}^{\mathbf{d}}_n\right|\exists x\in \ell \text{ such that }(x,[\ell])\in {\Lambda}_{r,\mathbf{G}}\right\},
$$
and we consider the sublocus 
$$
\mathcal{R}_n^{\mathbf{d}}\coloneqq\left\{([\ell],[\mathbf{G}])\in \mathcal{U}_n^{\mathbf{d}}\left|\begin{array}{l}\exists W\subset {\widetilde\Lambda}_{r,\mathbf{G}} \text{ such that } (x,[\ell]) \in W \text{ and } h^0(\widetilde W, K_{\widetilde W}-\nu^*H)=0,\\
\text{where }\nu\colon\widetilde{W}\longrightarrow W \text{ is a desingularization}\end{array}\right.\right\},
$$
where $H\coloneqq \pi_1^* O_{X}(1)$ is the pullback of the hyperplane bundle under the first projection $\pi_1\colon {\widetilde\Lambda}_{r,\mathbf{G}}\longrightarrow \mathbb{P}^n$.

Since $y$ lies outside the locus of $X$ covered by lines, we have a rational map $p\colon Y \dashrightarrow {\Lambda}_{r, \mathbf{F}}$ sending a general point $(x, \mathbf{F})$ to $(x,[\ell_{(x,\mathbf{F})}])$.
As in case (a), this map is birational to the closure of its image $Z\coloneqq \overline{p(Y)}$, and its inverse is given by $\pi_{1|Z}\colon Z\longrightarrow \mathbb{P}^n$.
Therefore, the same argument shows that $([\ell_0],[\mathbf{F}])\in \mathcal{R}_n^{\mathbf{d}}$.

Then, we set
$$
t\coloneqq \dim {\widetilde\Lambda}_{r, \mathbf{F}}= 
2n-d-(c-\beta) \quad\text{and}\quad m\coloneqq 1+\sum_{i=1}^c\binom{d_i+1}{2} - \sum_{d_i\geq r}d_i - d_c.
$$
and we consider the canonical bundle of ${\widetilde\Lambda}_{r,\mathbf{G}}$, which is governed by Lemma \ref{lem:can-bicontact-bis}.
We point out that $2\delta r=2r\geq 4$; indeed, if $r=1$, then $c=1$ and $d_c=2$ because we are assuming $2\leq d_1,\dots, d_{c-1}=r$ and $d_c=r+1$, but by assumption $d>2$.
We deduce from this fact and \eqref{eq:bound quadratico} that $K_{{\widetilde\Lambda}_{r,\mathbf{G}}}-H$ is effective. 
Therefore, we conclude by using the very same argument as above.

\smallskip
If instead $d_c=r$, then $\psi(Y\times \{\mathbf{F}\})\subset \widetilde{\Delta}_{r, \mathbf{F}}$ as in case $a=0$.
So, we set
$$
\mathcal{U}_n^{\mathbf{d}}\coloneqq\left\{\left.\left([\ell],[\mathbf{G}]\right)\in \mathbb{G}(1,n)\times \mathbb{H}^{\mathbf{d}}_n\right|\exists x\in \ell \text{ such that }(x,[\ell])\in \widetilde{\Delta}_{r,\mathbf{G}}\right\},
$$
and we introduce the sublocus 
$$
\mathcal{R}_n^{\mathbf{d}}\coloneqq\left\{([\ell],[\mathbf{G}])\in \mathcal{U}_n^{\mathbf{d}}\left|\begin{array}{l}\exists W\subset \widetilde{\Delta}_{r,\mathbf{G}} \text{ such that } (x,[\ell]) \in W \text{ and } h^0(\widetilde W, K_{\widetilde W}-\nu^*H)=0,\\
\text{where }\nu\colon\widetilde{W}\longrightarrow W \text{ is a desingularization}\end{array}\right.\right\},
$$
where $H\coloneqq \pi_1^* O_{X}(1)$ is the pullback of the hyperplane bundle under the first projection $\pi_1\colon \widetilde{\Delta}_{r,\mathbf{G}}\longrightarrow \mathbb{P}^n$.
Then we define $t$ and $m$ as in \eqref{eq:t} and \eqref{eq:m}, and we note that $\alpha r \geq 3$.
Indeed $r=d_c\geq 2$, and $\alpha$ must be greater than 1, otherwise $X_{\mathbf{F}}$ would be a quadric hypersurface.
Combining this fact and \eqref{eq:bound quadratico}, we obtain that $K_{\widetilde{\Delta}_{r,\mathbf{G}}}-H$ is effective.
Again, we conclude using the usual argument.


\smallskip
(c) Finally, let us assume that $\beta r \leq 2$.
Since $d_c\geq \dots\geq d_1\geq 2$, this case occurs if and only if $\beta=r=c=1$, or $\beta=c=1$ and $r=2$, or $\beta=c=2$ and $r=1$.  
In this case, we consider the loci
$$
\widetilde\Lambda_{r,\mathbf{d}-r, \mathbf{G}} \coloneqq\{(x,x',[\ell])| \ell\cdot V(G_i)\geq rx+(d_i-r) x' \ \forall i=1,\dots,c\}
\subset \mathbb P^n\times \mathbb P^n\times \mathbb G(1,n),
$$
$$
\mathcal{U}_n^{\mathbf{d}}\coloneqq\left\{\left.\left([\ell],[\mathbf{G}]\right)\in \mathbb{G}(1,n)\times \mathbb{H}^{\mathbf{d}}_n\right|\exists x,x'\in \ell \text{ such that }(x,x',[\ell])\in {\widetilde\Lambda}_{r,\mathbf{d}-r,\mathbf{G}}\right\},
$$
and we introduce the sublocus $\mathcal{R}_n^{\mathbf{d}}\subset \mathcal{U}_n^{\mathbf{d}}$ analogously.
Moreover, we set 
$$
t\coloneqq \dim {\widetilde\Lambda}_{r,\mathbf{d}-r, \mathbf{F}}= 
2n-d \quad\text{and}\quad m\coloneqq 1+\sum_{d_i\geq r}\frac{r(r-1)}{2} + \sum_{d_i> r}\frac{(d_i-r)(d_i-r-1)}{2}.
$$
It follows from Lemma \ref{lem:can-bicontact-ter} that 
$$
K_{\widetilde\Lambda_{r,\mathbf{d}-r,\F}} = \left(-2 + \sum_{d_i\geq r} r(d_i-r+1)\right) H_1 + \left(-2 + \sum_{d_i> r} (d_i-r)(r+1)\right) H_2 + \left(m-n \right) L .    
$$
Since we are in case (c), using \eqref{eq:bound quadratico} and the the fact that $X_{\F}$ is not a quadric hypersurface, it is easy to check that $m-n-t+k\geq 0$ and $K_{\widetilde\Lambda_{r,\mathbf{d}-r,\F}}-H_1$ is effective.
Thus we may obtain a contradiction by arguing as above.

\smallskip
In conclusion, if $a=1$, any $k$-dimensional variety $Y\subset X$ such that $h^0(\widetilde Y, K_{\widetilde Y}-\nu^*H)=0$ is contained in the locus covered by the lines in $X$.
\end{proof}

\begin{remark}\label{rem:(1.2-0)}
    We point out the when $a=0$, the bound \eqref{eq:bound quadratico} can be slightly weakened.
    Indeed, its role in the proof of Theorem \ref{thm:main} is to guarantee that $m-n-t+k\geq 0$ as in \eqref{eq:m-n}.
    Therefore, setting 
    $$
    \rho\coloneqq \max_{j=1,\dots ,c} \alpha_j(d_j+1),\quad \text{where } \alpha_j\coloneqq \#\{i|d_i=d_j\} \text{ for any }j=1,\dots, c,
    $$
    we can replace \eqref{eq:bound quadratico} by
\begin{equation*}
\sum_{i=1}^c \frac{d_i(d_i+1)}{2}+ d -\rho\geq 3n-k-c-1.  
\end{equation*}
Moreover, Theorem \ref{thm:main-distinc} below shows that this inequality could be even weaker under the assumption $d_1<\dots<d_c$.      
\end{remark}

\begin{remark}\label{rem:(1.2-1)}
As far as the case $a=1$ is concerned, we note that if $c\geq 3$, we only need to discuss cases (a) and (b) above, and hence \eqref{eq:bound quadratico} can be slightly weakened as follows:
\begin{equation}\label{eq:(1.2-1)}
\sum_{i=1}^c \frac{d_i(d_i+1)}{2}-d_c\geq 3n-k-1.  
\end{equation}
\end{remark}

\smallskip
We conclude this section by discussing an improvement in the case $a=0$ when all the degrees are distinct. 
In this case
we can argue by adjunction in a way similar to \cite{CR04,P04} and obtain better bounds. Precisely we have the following. 

\begin{theorem}\label{thm:main-distinc}
Let $n,c,k$ be positive integers and let $X\subset \mathbb{P}^n$ be a very general complete intersection of multidegree $(d_1,\dots,d_c)$ with $d_i\not=d_j$ $\forall i\not=j$.
Let $Y\subset X$ be any $k$-dimensional subvariety such that $h^0(K_{\widetilde Y})=0$, where  $\widetilde{Y}\longrightarrow Y$ is any desingularization of $Y$.
If 
\begin{equation}\label{eq:bound lineare-spe}
d\geq n+1+\left\lfloor\frac{n-c-k+1}{2}\right\rfloor     
\end{equation}
 and 
\begin{equation}\label{eq:bound quadratico-bis}
\sum_{i=1}^c \frac{d_i(d_i+1)}{2}+(d-d_c)\geq 3n-c-k,  
\end{equation}
then $Y$ is contained in the union of the lines lying on $X$.
\end{theorem}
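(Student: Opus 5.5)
The plan is to rerun the proof of Theorem \ref{thm:main} in the case $a=0$, and to observe that when the degrees are pairwise distinct the numerical input can be sharpened. The two estimates that were used there only to get a uniform statement, namely $d-\alpha r\geq 0$ and $c-\alpha\geq 0$, can be replaced by exact values.

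First, \eqref{eq:bound lineare-spe} is \eqref{eq:bound lineare} with $a=0$, and $h^0(K_{\widetilde Y})=0$ forces the restriction map \eqref{eq:vanish} to vanish (Remark \ref{rmk:mumford}). Hence Corollary \ref{cor:s-s'}, Proposition \ref{prop:b_r} and Proposition \ref{prop:Delta_r}(i) apply. They give an integer $r\geq 1$ with $\psi(Y\times\{\F\})\subset\widetilde\Delta_{r,\F}$. If $r\notin\{d_1,\dots,d_c\}$, every line occurring in $\widetilde\Delta_{r,\F}$ lies in $X$, and we are done. Otherwise $r=d_j$ for a unique $j$, since the $d_i$ are distinct. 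Thus $\alpha=\#\{i\mid d_i=r\}=1$, and this is the only place where the hypothesis $d_i\neq d_j$ enters.

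Next we assume, for a contradiction, that a general $y\in Y$ lies off the locus covered by lines, so that $\psi$ maps $Y$ birationally into $\widetilde\Delta_{r,\F}$. With $\alpha=1$, formula \eqref{eq:t} gives $t=\dim\widetilde\Delta_{r,\F}=2n-d-c$. Lemma \ref{lem:cancontact} gives $K_{\widetilde\Delta_{r,\F}}=(r-2)H+(m-n)L$ with $m=\sum_i\frac{d_i(d_i+1)}{2}-r$ and $r-2\geq 0$. The key numerical check is then
$$
m-n-t+k=\sum_{i=1}^c\frac{d_i(d_i+1)}{2}+d-r-3n+c+k\;\geq\;\sum_{i=1}^c\frac{d_i(d_i+1)}{2}+(d-d_c)-(3n-c-k)\;\geq 0,
$$
using $r\leq d_c$ and \eqref{eq:bound quadratico-bis}. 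This is exactly why \eqref{eq:bound quadratico-bis} takes this form. Since $Y$ embeds birationally in $\widetilde\Delta_{r,\F}$, we have $t\geq k$, so $m-n\geq 0$ and $K_{\widetilde\Delta_{r,\F}}$ is effective.

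From here the argument is verbatim the Grassmannian argument of the case $a=0$. Define $\mathcal U^{\d}_s$ and $\mathcal R^{\d}_s$ for $s\leq m$. Lemma \ref{lem:folk}(1) shows that $\mathcal R^{\d}_s$ is a countable union of proper locally closed subvarieties, and $Y$ gives $\codim_{\mathcal U_n^{\d}}\mathcal R_n^{\d}\leq t-k$. Embedding everything in $\mathbb P^M$ and inducting with Proposition \ref{prop:RY} gives $\codim_{\mathcal Z_n}\mathcal Z'_n\geq m-n+1\geq t-k+1$, which is a contradiction.

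The main obstacle I foresee is the first step in the regime where the loci are small. One must check that Proposition \ref{prop:Delta_r}(i) really needs only \eqref{eq:bound lineare} and not \eqref{eq:bound quadratico}, and that $\alpha r-2\geq 0$ still holds, which it does because $r=d_j\geq 2$. I would also double-check the effectivity of $K$ on the $\widetilde\Delta_{r,\G}$ for $s<n$ in the induction, where the $L$-coefficient $m-s$ grows as $s$ decreases. If the Grassmannian route gave trouble, I would fall back on a direct adjunction argument on $\widetilde\Delta_{r,\F}$, in the spirit of \cite{CR04,P04}.
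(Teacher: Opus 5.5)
Your argument is correct, but it is not how the paper proves this theorem.

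\textbf{Your route.} You rerun the $a=0$ Grassmannian argument from the proof of Theorem \ref{thm:main} with $\alpha=1$. There the crude estimates $d-\alpha r\geq 0$ and $c-\alpha\geq 0$ become exact, and the only numerical input is
$$m-n-t+k=\sum_i\frac{d_i(d_i+1)}{2}+(d-r)-(3n-c-k)\geq 0 .$$
This is precisely the refinement of Remark \ref{rem:(1.2-0)}: for distinct degrees $\rho=d_c+1$, and the bound there is literally \eqref{eq:bound quadratico-bis}.

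\textbf{The paper's route.} The paper instead argues by adjunction on $\widetilde\Delta_{r,\F}$, as in \cite{CR04,P04}. Because $\alpha=1$, the kernel $\mathcal N_\d$ of $S^\d\otimes\mathcal O\to\mathcal A_{\d,r}$ is an extension of the line bundle $\mathcal L_r$ by $\mathcal M_\d$, whose fibres are $S^\d_\ell$. This gives a dichotomy.
\begin{enumerate}
\item Either $T\cap S^\d_\ell$ has full codimension $2n-c-d-k$ in $S^\d_\ell$. Then global generation of $\mathcal M_\d\otimes L$ \cite{P03} and Lemma \ref{lem:cancontact} turn \eqref{eq:final} into a nonzero section of $K_{\widetilde Y_\F}$, under the same inequality $m-n\geq t-k$ that you use.
\item Or the codimension drops. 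This forces $T\cap\mathcal N_\d=T\cap S^\d_\ell$, and since $\F\in T$ one gets $\F_{|\ell}=0$, i.e.\ $\ell\subset X$.
\end{enumerate}

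\textbf{Comparison.} The adjunction route stays in $\mathbb P^n$. It needs neither the embedding in $\mathbb P^M$, nor the countability of the loci $\mathcal R^\d_s$, nor Proposition \ref{prop:RY}. However, it uses $\alpha=1$ structurally, through the rank-one quotient, which is why it does not generalize (Remark \ref{rmk:nonfunziona col -1}). Your route uses distinctness only numerically. It exhibits the theorem as a by-product of the proof of Theorem \ref{thm:main}, and it shows that for distinct degrees the two methods yield exactly the same bound.

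\textbf{Two small precisions.}
\begin{enumerate}
\item $\psi$ is birational onto its image in any case, since it is a section over $Y$. What the off-lines assumption actually gives is that $y\mapsto[\ell_{(y,\F)}]$ is generically injective, because $\ell_{(y,\F)}\cap X=\{y\}$. That is what makes the fibre of $\mathcal R^\d_n\to\mathbb H^\d_n$ over $[\F]$ at least $k$-dimensional.
\item The induction only involves $n\leq s\leq m$, where the $L$-coefficient $m-s$ of the canonical class is nonnegative. So your worry about $s<n$ does not arise.
\end{enumerate}
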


\begin{proof}
    By Proposition \ref{prop:Delta_r}, item (i), $\widetilde{Y}_\F$ injects into the contact locus
   $$\widetilde{\Delta}_{r,\F}\coloneqq \left\{(x,[\ell])\left|\,\ell\cdot V(G_i)\geq rx \text{ if }d_i=r \text{ and } \ell\subset V(G_i)\text{ otherwise}\right.\right\}.$$ 
Notice that by (\ref{eq:dim Delta_r}), under our hypothesis $d_i\not=d_j$ $\forall i\not=j$, its codimension is given by the formula
$$
\codim_{\widetilde{\Delta}_{r,\F}} (\widetilde{Y}_\F)=2n-d-c-k.
$$
Using notation as in \S \ref{sub:contact}, we have
$$
\mathcal A_{{\bf d}, r}= \Big( \bigoplus_{d_j\not=r} q^*\mathcal E_{d_j} \Big) \oplus \mathcal F_{r}.
$$
Set $\mathcal E_{\d}:=\Big( \bigoplus_{i=1}^c \mathcal E_{d_i} \Big)$.
Consider the kernel of the two natural evaluation maps
$$
0\longrightarrow \mathcal{M}_\d \longrightarrow S^\d\otimes \mathcal{O}_{\mathcal{P}}\longrightarrow q^*\mathcal E_{\d}\longrightarrow 0
$$
and 
$$
0\longrightarrow \mathcal{N}_\d \longrightarrow S^\d\otimes \mathcal{O}_{\mathcal{P}}\longrightarrow\mathcal A_{\d,r}\longrightarrow 0. 
$$
By definition,  the fiber of $\mathcal{M}_\d$ at a point $(x,[\ell],\F)$ is the graded homogeneous ideal $S^\d_\ell=\bigoplus H^0(I_\ell(d_i))$.
Then we have 
$$
0\longrightarrow \mathcal{M}_\d \longrightarrow
\mathcal{N}_\d \longrightarrow\mathcal{L}_r \longrightarrow 0. 
$$
Notice that the restriction $(\mathcal{N}_\d)_{|\widetilde \Delta_{r,\F}}$ coincides with the restriction to ${\widetilde \Delta_{r,\F}}$ of the vertical tangent space 
$T^{vert}_{\widetilde \Delta_{r}}\coloneqq\ker \Big(T_{\widetilde \Delta_{r}}\twoheadrightarrow T_{\mathcal P}\Big)$ 
with respect to the natural projection onto ${\mathcal P}$. Fix a point $(x,[\ell],\F)\in \mathcal Y$ and consider  $T:=T_{\mathcal{Y}, (x,[\ell],\F)}$. By the $\GL(n+1)$-invariance, we have 
$$
 \codim_{\widetilde{\Delta}_{r,\F}} (\widetilde{Y}_\F)= \codim_{(\mathcal{N}_\d)_{(x,[\ell],\F)}} T\cap (\mathcal{N}_\d)_{(x,[\ell],\F)}.
$$
We  now study the intersection 
$
T\cap S^\d_\ell \subset T\cap (\mathcal{N}_\d)_{(x,[\ell],\F)}. 
$
We have either
\begin{equation}\label{eq:cod}
    \codim_{S^\d_\ell} (T\cap S^\d_\ell) = \codim_{(\mathcal{N}_\d)_{(x,[\ell],\F)}} T\cap (\mathcal{N}_\d)_{(x,[\ell],\F)}=2n-c-d-k
\end{equation}
or 
\begin{equation}\label{eq:cod-1}
       \codim_{S^\d_\ell} (T\cap S^\d_\ell) \leq 2n-c-d-k -1.
\end{equation}

\smallskip
If \eqref{eq:cod} holds we claim that we reach a contradiction by adjunction under the numerical hypothesis \eqref{eq:bound quadratico-bis}. Indeed, by adjunction and restriction of K\"ahler differentials we have 
\begin{eqnarray}\label{eq:restr+adj}
 \wedge^{2n-d-c-k} {{T_{\widetilde \Delta_{r}}}}_{|{{\widetilde\Delta}_{r,\F}}}
\otimes K_{\widetilde\Delta_{r,F}}\cong
 {{\Omega}^{N+k}_{{\widetilde \Delta_{r}}}}_{|\widetilde \Delta_{r,\F}}
 \longrightarrow
 {\Omega ^{N+k}_{{\widetilde{\mathcal Y}}}}_{|{\widetilde Y_F}}
 \cong K_{\widetilde Y_F}.
 \end{eqnarray}
 By $\GL(n+1)$-invariance of $\mathcal Y$ it is sufficient to consider 
 \begin{eqnarray}\label{eq:restr+adj vert}
 \wedge^{2n-d-c-k}\ {{T^{vert}_{\widetilde \Delta_{r}}}}_{|{{\widetilde\Delta}_{r,\F}}}
\otimes K_{\widetilde\Delta_{r,F}}
\longrightarrow K_{\widetilde Y_F}.
 \end{eqnarray}
 Since in the case we are considering $\codim_{S^\d_\ell} (T\cap S^\d_\ell) =2n-c-d-k$,
we are reduced to consider 
\begin{eqnarray}\label{eq:final}
 \wedge^{2n-d-c-k}\ {(\mathcal M_\d)_{|{{\widetilde\Delta}_{r,\F}}}}
\otimes K_{\widetilde\Delta_{r,F}}
\longrightarrow K_{\widetilde Y_F}.
 \end{eqnarray}
 Now the bundle $\mathcal M_\d$ is the pull-back of the corresponding bundle on the Grassmanniann
 $$
0\longrightarrow {M}_\d \longrightarrow S^\d\otimes \mathcal{O}_{\mathbb{G}}\longrightarrow \mathcal E_{\d}\longrightarrow 0. 
 $$
By \cite[Proposition 2.2, item (ii)]{P03} the bundle $\mathcal M_\d\otimes L$ is globally generated. Therefore, using Lemma \ref{lem:cancontact} and taking global sections of the map \eqref{eq:final}, we produce a non-zero section of $K_{\widetilde Y_\F}$, which gives a contradiction as soon as 
$$
\sum_{i=1}^c \frac{d_i(d_i+1)}{2}-r-n\geq 2n-d-c-k,\  
i.e. 
\
\sum_{i=1}^c \frac{d_i(d_i+1)}{2}+(d-r)\geq 3n-c-k.  
$$
The conclusion now follows. 

\smallskip
Therefore, we assume that \eqref{eq:cod-1} holds.
In this case, we claim that $\F_{|\ell}$ vanishes and we are done. Indeed,
$$
\rk (T\cap S^\d_\ell) \geq \dim (S^\d_\ell)- (2n-c-d-k -1)= N-d-c - (2n-c-d-k -1)= N+k-2n+1.
$$
However,
$$\rk\ T\cap (\mathcal{N}_\d)_{(x,[\ell],\F)}=
\dim(\mathcal Y)-\dim(\mathcal P)=(N+k)-(2n-1).
$$
Hence $T\cap (\mathcal{N}_\d)_{(x,[\ell],\F)}= (T\cap S^\d_\ell)$ and the conclusion follows from the fact that $\F\in T$.
\end{proof}

\begin{remark}\label{rmk:nonfunziona col -1}
Unfortunately we do not see how to make a similar argument work to deal with subvarieties $Y_\F$ not of general type, at least for some degrees. Indeed in this case we should work on $\widetilde \Lambda_{r,\F}$.  Yet the dimension of $\widetilde \Lambda_{r,\F}$---and hence the $\codim_{\widetilde \Lambda_{r,\F}} (\widetilde Y_\F)$---depends on the number $\beta$ of $d_i$'s which are $\geq r$. This number varies from $1$ to $c$ and, as we cannot control for which $r$ the bi-osculation  occurs, we cannot reduce to a favorable situation, as in the previous proof, where either  \eqref{eq:cod}  or \eqref{eq:cod-1} hold.    
\end{remark}
\section{Proof of Theorem \ref{thm:Ein+}, Corollary \ref{cor:DR+} and Theorem \ref{thm:RY+}}

In this section, we use Theorem \ref{thm:main} to achieve Theorem \ref{thm:Ein+}, Corollary \ref{cor:DR+} and Theorem \ref{thm:RY+}.

\begin{proof}[Proof of Theorem \ref{thm:Ein+}]
    Let $X\subset \mathbb{P}^n$ be a very general complete intersection of type $(d_1,\dots,d_c)$, with $d\geq 2n-c-k$ and $n\geq \max\{2c+k+a, c+k+3+2a\}$.
    
    Let $a\in \{0,1\}$, and suppose that the assumptions of Theorem \ref{thm:main} are satisfied.
    So, if $Y\subset X$ were a $k$-dimensional subvariety such that $h^0(\widetilde Y, K_{\widetilde Y}\otimes\nu^*{\mathcal{O}}_{Y}(-a))=0$ for some desingularization $\nu\colon \widetilde Y\longrightarrow Y$, then $Y$ would be contained in the union of the lines lying on $X$.
    However, such a union of lines is supported on a subvariety of $X$ of dimension 
    $$
    2n-1-\sum_{i=1}^c(d_i+1)=2n-1-d+c\leq k-1<\dim Y
    $$
    (see e.g. \cite[Th\'eor\`eme 2.1]{DM}), a contradiction. 
    Thus $X$ does not contain $k$-dimensional subvarieties $Y$ satisfying $h^0(\widetilde Y, K_{\widetilde Y}\otimes\nu^*{\mathcal{O}}_{Y}(-a))=0$, and the assertion of Theorem \ref{thm:Ein+} follows.

    \smallskip
    Therefore, it suffices to show that the assumptions of Theorem \ref{thm:main} are satisfied.
    Since $d\geq 2n-c-k$ and $2a+3\leq n-c-k$, we have that
    $$
        n+1+\left\lfloor\frac{n-c-k+1}{2}\right\rfloor   + a  \leq \frac{3n+2a+3-c-k}{2}\leq \frac{4n-2c-2k}{2}\leq d,
    $$
    that is \eqref{eq:bound lineare} holds.
    
    In order to discuss \eqref{eq:bound quadratico}, we distinguish three cases:
    $$
    \text{(1) }a=0, \quad \text{(2) }a=1 \text{ and } c\leq 2, \quad \text{(3) }a=1 \text{ and } c\geq 3.
    $$

    \smallskip
    (1) We point out that, if $x_1+\dots+x_c=d$ is fixed, the function $f(x_1,\dots,x_c)=\sum_{i=1}^cx_i^2$ is minimal when $x_1=\dots=x_c=\frac{d}{c}$.
    Hence
   \begin{equation}\label{eq:minorazione}\sum_{i=1}^c \frac{d_i(d_i+1)}{2}=\frac{1}{2}\sum_{i=1}^cd_i^2+ \frac{1}{2}d\geq 
    \frac{1}{2c}\left(d^2+cd\right).
    \end{equation}
    Therefore, in order to check \eqref{eq:bound quadratico}, it is enough to prove that
    \begin{equation}\label{eq:caso (1)}
        \frac{1}{2c}\left(d^2+cd\right) \geq 3n-k-1.
    \end{equation}
    Using $d\geq 2n-c-k$ and $n-2c-k-a\geq 0$ with $a=0$, we obtain 
    \begin{align*}
        d^2+cd - 2c(3n-k-1) &\geq (2n-c-k)^2 + c(2n-c-k)- 2c(3n-k-1)\\ 
        &= 4n(n -2c -k) +k^2 +3ck+2c \geq 0,
    \end{align*}
    so that \eqref{eq:caso (1)} holds.

    \smallskip
    (2) Analogously, if $c=2$, then $d_1^2+d_2^2\geq 2(d/2)^2$, $d\geq 2n-2-k$ and $n\geq c+k+3+2a=k+7$.
    Hence \eqref{eq:bound quadratico} holds, as
    \begin{align*}
        \sum_{i=1}^2 \frac{d_i(d_i+1)}{2}-(d_1+d_2)-(3n-k-1) 
        & \geq \frac{1}{4}d^2- \frac{1}{2}d -3n+k+1\\
        & \geq \frac{1}{4}(2n-2-k)^2 - \frac{1}{2}(2n-2-k)- 3n+k+1\\ 
        &= n(n -6 -k) +\frac{1}{4}(k^2 +10k+12) \geq 0.
    \end{align*}
    If instead $c=a=1$, then \eqref{eq:bound quadratico} is $\frac{d(d-3)}{2}\geq 3n-k-1$, with $d\geq 2n-1-k$ and $n\geq k+6$.
    Thus
    \begin{align*}
        \frac{d(d-3)}{2} -(3n-k-1) & \geq \frac{1}{2}(2n-1-k)^2 -\frac{3}{2} (2n-1-k)- 3n+k+1 \\
        & = 2n(n -k -4) +\frac{1}{2}(k^2 +7k+6) \geq 0.
    \end{align*}
    
    \smallskip
    (3) Finally, we assume that $a=1$ and $c\geq 3$.
    In the light of Remark \ref{rem:(1.2-1)}, it is enough to check \eqref{eq:(1.2-1)}. 
    We note that, when $x_1+\dots+x_c=d$ is constant, the function $f(x_1,\dots,x_c)=\sum_{i=1}^c \frac{x_i(x_i+1)}{2}-x_c$ is minimal at $x_1=\dots=x_{c-1}=\frac{d-1}{c}$ and $x_c=x_1+1=\frac{d-1+c}{c}$.
    Therefore, 
    \begin{align*}
        \sum_{i=1}^c \frac{d_i(d_i+1)}{2}-d_c 
        &= \frac{1}{2}\left(\sum_{i=1}^{c-1} d_i^2+ d_c^2+d-2d_c\right)\\
        & \geq \frac{1}{2}\left((c-1)\frac{(d-1)^2}{c^2}+ \frac{(d-1+c)^2}{c^2} + d -2\frac{(d-1+c)}{c}\right)\\
        & = \frac{1}{2c}\left(d^2+ (c-2)d-c+1\right).
    \end{align*}
    Then, in order to achieve \eqref{eq:(1.2-1)}, it is enough to prove that $\frac{1}{2c}\left(d^2+ (c-2)d-c+1\right)\geq 3n-k-1$.
    To this aim, we recall that $d\geq 2n-c-k$ and $n-2c-k-a\geq 0$ with $a=1$, so that
    \begin{align*}
        (d^2+ (c-2)d-c+1) - 2c(3n-k-1) &\geq (2n-c-k)^2 + (c-2)(2n-c-k)- 2c(3n-k-1)\\ 
        &= 4n(n -2c -k-1) +k^2 +3ck+4c+2k \geq 0,
    \end{align*}
    and hence \eqref{eq:(1.2-1)} holds. 
\end{proof}

\begin{proof}[Proof of Corollary \ref{cor:DR+}]
 Let $X\subset \mathbb{P}^n$ be a very general complete intersection of type $(d_1,\dots,d_c)$, with $d\geq 2n-c-k$.
  Given a subvariety $Y\subset X$ and a desingularization $\nu\colon\widetilde{Y}\longrightarrow Y$, we point out that if $h^0 \left(\widetilde{Y}, K_{\widetilde{Y}}\otimes \nu^*\mathcal O_{{Y}}(-1)\right)>0$, then $Y$ is of general type.
 Indeed $E=K_{\widetilde{Y}}\otimes \nu^*\mathcal O_{{Y}}(-1)$ is effective and $\mathcal O_{{X}}(1)$ is ample, so that $K_{\widetilde{Y}}=E\otimes \nu^*\mathcal O_{{Y}}(1)$ is big, i.e. $Y$ is of general type.
 
 It follows from $d\geq 2n-c-k$ and \cite[Theorem 2.1]{E88} that for any $3\leq k\leq n-c$, all the $k$-dimensional subvarieties of $X$ are of general type.
 Furthermore, if $k=1,2$ and $n\geq \max\{2c+3, c+7\}$, Theorem \ref{thm:Ein+} applies for $a=1$, and we conclude that all the curves and the surfaces in $X$ are of general type.
 Thus $X$ is algebraically hyperbolic (\`a la Lang).

 Now, let $C\subset X$ be an integral curve, with desingularization $\nu\colon\widetilde{C}\longrightarrow C$. 
 Since $n\geq \max\{2c+2, c+5\}$, Theorem \ref{thm:Ein+} applies for $a=k=1$, and hence $h^0 \left(\widetilde{C}, K_{\widetilde{C}}\otimes \nu^*\mathcal O_{{C}}(-1)\right)>0$.
 Then $K_{\widetilde{C}}\otimes \nu^*\mathcal O_{{C}}(-1)$ is effective, and hence $\deg\left(K_{\widetilde{C}}\otimes \nu^*\mathcal O_{{C}}(-1)\right)=2g(C)-2-\mathcal O_{\widetilde{C}}(1)\geq 0$.
 Thus $2g(C)-2\geq \deg \mathcal O_{\widetilde{C}}(1)=(H\cdot C)$, where $H\coloneqq \mathcal O_{{X}}(1)$.
 In particular, $X$ is algebraically hyperbolic (\`a la Demailly). 
\end{proof}
\begin{remark}\label{rmk:kob}
     The Kobayashi hyperbolicity of a very general complete intersection follows from the  Kobayashi hyperbolicity of a very general hypersurface (there are currently no bounds involving the codimension $c$).
     The available bounds (see e.g. \cite{Ber, Br,Cad})  are of course much worse than the expected ones given by algebraic hyperbolicity, because Kobayashi hyperbolicity is a much more difficult problem to study. 
\end{remark}
In order to prove Theorem \ref{thm:RY+}, we introduce two additional results.
The first one is a special case of \cite[Theorem 1.2]{BCFS20}.
\begin{theorem}\label{thm:BCFS} Let $n\geq c+2$ and let $X \subset \mathbb{P}^n$ be a very general complete intersection of type $(d_1, \dots, d_c)$, with $\Pi_{i=1}^c d_i >2$. 
If 
\begin{equation}\label{eq:BCFS}
\sum_{i=1}^c  \binom{d_i+2}{2} \geq 3n,  
\end{equation} 
then the Fano scheme $F_1(X)$ of lines in $X$ contains neither rational nor elliptic curves. 
\end{theorem}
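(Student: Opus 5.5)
The plan is to deduce the statement directly from \cite[Theorem 1.2]{BCFS20}. That result concerns the Fano scheme of lines of a very general complete intersection and bounds from below the geometric genus of curves in it. So the work consists of matching the setting and checking that \eqref{eq:BCFS} is exactly their numerical hypothesis for excluding curves of genus $0$ and $1$.

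First I will set up $F_1(X)$ as in \S\ref{sub:contact}. It is the zero locus in $\bG=\mathrm{Gr}(1,n)$ of the section $\oplus_i\sigma_{F_i}$ of the globally generated bundle $\oplus_i\mathcal E_{d_i}$. Hence for very general $\F$ it is smooth of dimension $2n-2-\sum_i(d_i+1)$. By adjunction and \eqref{eq:c1E-contact} its canonical class is
$$K_{F_1(X)}=\Big(\sum_{i=1}^c\tfrac{d_i(d_i+1)}{2}-n-1\Big)L.$$
The hypotheses $n\geq c+2$ and $\prod d_i>2$ serve two purposes. They exclude the degenerate cases of a linear space and of a quadric hypersurface, whose Fano schemes of lines are homogeneous and do contain rational curves. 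In all other cases they place us in the range where \cite{BCFS20} applies.

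Next I will recall the mechanism behind \cite[Theorem 1.2]{BCFS20}, which also explains the shape of \eqref{eq:BCFS}. One considers the universal Fano scheme $\mathcal F\to S^{\d}$ and a $\GL(n+1)$-invariant family of curves $\mathcal C\subset\mathcal F$. Voisin's method from Remark \ref{subs:nonfunziona} is then applied on $\bG$ in place of $\bP^n$. The vertical tangent bundle of $\mathcal F$ is the kernel $M_\d$ of $S^\d\otimes\mathcal O_\bG\to\mathcal E_\d$. By \cite[Proposition 2.2(ii)]{P03} the bundle $M_\d\otimes L$ is globally generated. Adjunction, together with the codimension $\dim F_1(X)-1$ of the curve, then produces sections of $K_{\widetilde C}-\nu^*L$ (or of $K_{\widetilde C}$) whenever the following holds:
$$\sum_i\binom{d_i+1}{2}-n-1\geq\dim F_1(X)-1+\epsilon,\qquad \epsilon\in\{0,1\}.$$
Since $\sum_i\binom{d_i+1}{2}+\sum_i(d_i+1)=\sum_i\binom{d_i+2}{2}-c$, this inequality becomes a bound of the type $\sum_i\binom{d_i+2}{2}\geq 3n+\text{const}$. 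The refinement by one or two obtained in \cite{BCFS20}, in the spirit of Voisin's improvement of Ein, brings the bound to \eqref{eq:BCFS}.

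The main step, and the only real obstacle, is the bookkeeping needed to check that our \eqref{eq:BCFS} coincides with (or implies) the hypothesis of \cite[Theorem 1.2]{BCFS20} both for rational and for elliptic curves. In particular, the positive-genus conclusion must hold in the elliptic case, where one needs $h^0(K_{\widetilde C}-\nu^*L)>0$, or at least $2g-2>0$. I would first compare the two numerical conditions directly, taking care of the normalisation of $\dim F_1(X)$ and of the Plücker class. If the elliptic case needs one extra unit, I would recover it from the strict inequality obtained by using the general-type conclusion of \cite{BCFS20}. Once this is verified, the statement follows immediately.
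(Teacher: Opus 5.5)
Your plan matches the paper's, which gives no argument of its own and simply calls the statement a special case of \cite[Theorem 1.2]{BCFS20}. However, the heuristic you use to locate the difficulty contains an arithmetic slip, and that slip is what creates the obstacle you anticipate.

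By Pascal's rule $\binom{d_i+2}{2}=\binom{d_i+1}{2}+d_i+1$, so $\sum_i\binom{d_i+1}{2}+\sum_i(d_i+1)=\sum_i\binom{d_i+2}{2}$, with no $-c$. With $\dim F_1(X)=2n-2-d-c$, your adjunction condition is therefore exactly $\sum_i\binom{d_i+2}{2}\geq 3n-2+\epsilon$. Hypothesis \eqref{eq:BCFS} implies this even for $\epsilon=1$, with a unit to spare.

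So the \emph{refinement by one or two} that you attribute to \cite{BCFS20} is not needed, and the bookkeeping you single out as the main step disappears. Concretely, set $e=2n-3-d-c\geq 0$; if $e<0$ there are no curves to consider. Then
$\wedge^{e}M_{\mathbf d}\otimes K_{F_1(X)}\otimes L^{-1}=\wedge^{e}(M_{\mathbf d}\otimes L)\otimes L^{\otimes(\sum_i\binom{d_i+1}{2}-n-2-e)}$,
and the exponent of $L$ is nonnegative exactly when $\sum_i\binom{d_i+2}{2}\geq 3n-1$. Since $M_{\mathbf d}\otimes L$ is globally generated by \cite[Proposition 2.2]{P03}, this bundle is globally generated under that bound.

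Now take a curve $C\subset F_1(X)$ with $X$ very general. Reduce, as in Section 2, to a $\mathrm{GL}(n+1)$-invariant family of curves in the universal Fano scheme over $S^{\mathbf d}$. The vertical tangent bundle over $\bG$ then surjects onto the normal bundle, and adjunction gives $h^0(K_{\widetilde C}-\nu^*L)>0$. Hence $2g(\widetilde C)-2\geq\deg\nu^*L>0$, because $L$ is ample on $\bG$, and this excludes rational and elliptic curves at once.

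With the slip fixed, your sketch is a self-contained proof that does not depend on knowing the precise hypothesis of \cite[Theorem 1.2]{BCFS20}. It even works under the weaker bound $\sum_i\binom{d_i+2}{2}\geq 3n-1$.
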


The following result is an extension to complete intersections of \cite[Lemma 2.4]{RY20}, which concerns the case of hypersurfaces.
\begin{lemma}\label{lem:RY+} Let $n\geq c+2$ and let $X \subset \mathbb{P}^n$ be a general complete intersection of type $(d_1, \dots, d_c)$, with 
\begin{equation}\label{eq:RY+}
d\geq \frac{3n-c-1}{2}. 
\end{equation}  
Then $X$ does not contain reducible conics. 
\begin{proof}
    We follow the same argument of the proof of \cite[Lemma 2.4]{RY20}.
    Let $\mathcal{C}$ be the space parameterizing reducible conics in $\mathbb{P}^n$, which has dimension $n+2(n-1)=3n-2$.
    Consider the incidence variety
    $$\mathcal{I}:=\left\{\left.\left([C],\mathbf{F}\right)\in \mathcal{C}\times S^{\mathbf{d}} \right| C\subset X_{\mathbf{F}}\right\},$$
    endowed with the natural projections $\pi_1\colon \mathcal{I}\longrightarrow \mathcal{C}$ and $\pi_2\colon \mathcal{I}\longrightarrow S^{\mathbf{d}}$.
    Let $N\coloneqq\dim S^{\mathbf{d}}$, and notice that vanishing on a fixed reducible conic $C$ imposes exactly $2d_i+1$ conditions to homogeneous polynomials $F_i$ of degree $d_i$. 
    Hence $\dim \pi_1^{-1}([C])=\dim S^{\mathbf{d}}-\sum (2d_i+1)=N-2d-c$, and 
    $$\dim \mathcal{I}=\dim \mathcal{C}+\dim \pi_1^{-1}([C])=3n-2+N-2d-c\leq N-1,$$
    where the inequality follows from \eqref{eq:RY+}. 
    Thus the projection $\pi_2\colon \mathcal{I}\longrightarrow S^{\mathbf{d}}$ is not dominant and the assertion follows.
\end{proof}
\end{lemma}

We can now prove Theorem \ref{thm:RY+}.
\begin{proof}[Proof of Theorem \ref{thm:RY+}]
Let $X\subset \mathbb{P}^n$ be a very general complete intersection of type $(d_1,\dots,d_c)$, with $d\leq 2n-c-2$.
It follows from \cite[Th\'eor\`eme 2.1]{DM} that $X$ contains lines, so we consider the universal family over the Fano scheme $F_1(X)$ of lines in $X$,
$$
\mathcal{L}:=\left\{\left.\left([\ell],x\right)\in F_1(X)\times \mathbb{P}^n \right| x\in \ell\right\}\stackrel{p}{\longrightarrow} F_1(X).
$$
Since $d\geq n+1+\left\lfloor\frac{n-c}{2}\right\rfloor$, Lemma \ref{lem:RY+} yields that $X$ contains no reducible conics.
Hence the projection $q\colon \mathcal{L}\longrightarrow \mathbb{P}^n$ is a bijection over the subvariety $q(\mathcal{L})\subset X$ covered by the lines lying on $X$.

We note that the first quadratic bound in the statement is \eqref{eq:bound quadratico} with $a=0$. 
Therefore, if $Y\subset X$ is a rational curve, then $Y\subset q(\mathcal{L})$ by Theorem \ref{thm:main}, and $q^{-1}(Y)\subset \mathcal{L}$ is a rational curve.
Moreover, using \eqref{eq:bound quadratico}, we obtain
\begin{equation*}
\sum_{i=1}^c  \binom{d_i+2}{2} = \sum_{i=1}^c  \left(\frac{d_i(d_i+1)}{2}+d_i+1\right)\geq 3n-2 + d + c\geq 3n,
\end{equation*}
that is \eqref{eq:BCFS} holds.
Thus Theorem \ref{thm:BCFS} yields that the curve $q^{-1}(Y)\subset \mathcal{L}$ is contracted by the map $p\colon \mathcal{L}\longrightarrow F_1(X)$, i.e. $Y$ is a line.    

As far as the second part of the statement is concerned, suppose that \eqref{eq:bound lineare} and \eqref{eq:bound quadratico} hold with $a=1$.
If $Y\subset X$ were an elliptic curve, by arguing as above, we would deduce that $Y$ is contracted by $q\colon \mathcal{L}\longrightarrow \mathbb{P}^n$, a contradiction. 
Thus $X$ does not contain elliptic curves.
\end{proof}

\begin{remark}\label{rem:(1.2-2)}
We note that if $a=0$, $k=1$ and $c\leq n/3$, then \eqref{eq:bound lineare} implies \eqref{eq:bound quadratico}.
Indeed, \eqref{eq:bound lineare} gives $d\geq \frac{3n+2-c}{2}$, and using \eqref{eq:minorazione}, we have that 
\begin{align*}
        \sum_{i=1}^c \frac{d_i(d_i+1)}{2}& \geq  \frac{1}{2c}\left(d^2+cd\right) \geq \frac{1}{8c}\left[ (3n+2-c)^2 +2c(3n+2-c)\right] \\
        & =\frac{1}{8c}\left[9n^2+12n+4-c^2\right]\geq\frac{3}{8n}\left[9n^2+12n+4-\frac{n^2}{9}\right]
        >3n+4,
    \end{align*}
which implies \eqref{eq:bound quadratico}.  
\end{remark}

\end{document}